\newtheorem{thm}{Theorem}[section]
\newtheorem{cor}[thm]{Corollary}
\newtheorem{lemma}[thm]{Lemma}
\newtheorem{prop}[thm]{Proposition}
\newtheorem{proposition}[thm]{Proposition}
\newtheorem{definition}[thm]{Definition}
\theoremstyle{definition}
\newtheorem{remark}[thm]{Remark}
  \newtheorem{quest}[thm]{Questions}
\def\min{\operatorname{min}}
\def\im{\operatorname{Im}}
\def\max{\operatorname{max}}
\def\c1{\operatorname{c_1}}
\def\c2{\operatorname{c_2}}
\def\Y{{\mathcal Y}}
\def\Sym{\operatorname{Sym}}
\def\PGL{\operatorname{PGL}}
\def\CC{{\mathbb C}}
\def\PP{{\mathbb P}}
\def\B{{\mathcal B}}
\def\R{{\mathcal R}}
\def\L{{\mathcal L}}
\def\M{{\rm M}}
\def\N{{\mathcal N}}
\def\O{{\mathcal O}}
\def\I{{\mathcal J}}
\def\H{{\mathcal H}}
\def\K{{\mathcal K}}
\def\U{{\mathcal U}}
\def\V{{\mathcal V}}
\def\W{{\mathcal W}}
\def\K{{\mathcal K}}
\def\FF{{\mathbb F}}
\def\x{\times}                   
\def\cong{\simeq}
\def\sub{\subseteq}
\def\+{\oplus}                   
\def\*{\otimes}                  
\def\Pic{\operatorname{Pic}}
\begin{document}

\title{Moduli of nodal curves on $K3$ surfaces}

\author{C. Ciliberto}
\address{Ciro Ciliberto, Dipartimento di Matematica, Universit\`a di Roma Tor Vergata, Via della Ricerca Scientifica, 00173 Roma, Italy}
\email{cilibert@mat.uniroma2.it}

\author{F. Flamini}
\address{Flaminio Flamini, Dipartimento di Matematica, Universit\`a di Roma Tor Vergata, Via della Ricerca Scientifica, 00173 Roma, Italy}
\email{flamini@mat.uniroma2.it}

\author{C. Galati}
\address{Concettina Galati, Dipartimento di Matematica e Informatica, Universit\`a della Calabria, via P. Bucci, cubo 31B, 87036 Arcavacata di Rende (CS), Italy}
\email{galati@mat.unical.it}

\author{A. L. Knutsen}
\address{Andreas Leopold Knutsen, Department of Mathematics, University of Bergen, Postboks 7800,
5020 Bergen, Norway}
\email{andreas.knutsen@math.uib.no}

\begin{abstract}    We consider modular properties of nodal curves on general $K3$ surfaces.  Let $\K_p$  be the moduli space of primitively polarized $K3$ surfaces $(S,L)$ of genus $p\geqslant 3$ and $\V_{p,m,\delta}\to \K_p$ be the universal Severi variety of $\delta$--nodal irreducible curves in $|mL|$ on  $(S,L)\in \K_p$. We find conditions on $p, m,\delta$ for the existence of an irreducible component $\V$ of $\V_{p,m,\delta}$ on which the moduli map $\psi: \V\to  \M_g$ (with $g=
m^2 (p -1) + 1-\delta$) has generically maximal rank  differential. Our results, which  for any $p$ leave only finitely many cases unsolved and are optimal for $m\geqslant 5$ (except for very low values of $p$), are summarized in Theorem \ref {thm:main} in the introduction. 
\end{abstract}

\maketitle


\section{Introduction}

Let $(S,L)$  be a smooth, primitively  polarized complex $K3$ surface of genus $p\geqslant 2$, with $L$ a globally generated, indivisible line bundle with $L^ 2=2p-2$. We denote by $\K_p$ the moduli space (or stack) of smooth primitively polarized $K3$ surfaces of genus $p$, which is  smooth, irreducible, of dimension $19$. Its elements  correspond to the isomorphism classes $[S,L]$ of pairs $(S,L)$  as above. We will often abuse notation and denote $[S,L]$ simply by $S$.

For $m \geqslant 1$,   the arithmetic genus of the curves in $|mL|$ is $p(m)=m^ 2(p-1)+1$. Let  $\delta$ be an integer such that $ 0 \leqslant \delta \leqslant p(m)$.    We consider 
the quasi--projective scheme (or stack) $\V_{p, m,\delta}$ (or simply $\V_{m,\delta}$, when $p$ is understood), called the $(m,\delta)$--\emph{universal Severi variety}, parametrizing all pairs  $(S,C)$, with  $(S,L)  \in \K_p$ and $C\in |mL|$ a reduced and  irreducible curve, with only $\delta$ nodes as singularities. 

One has the projection 
\begin{equation*}\label{eq:phiVK}
\phi_{m,\delta}: \V_{m,\delta}\to \K_{p} 
\end{equation*}
whose fiber over $(S,L) \in \K_p$ is the variety, denoted by
$V_{m, \delta} (S)$,
called the {\em Severi variety} of  $\delta$-nodal  irreducible curves  in $|mL|$.  The variety
$V_{m,\delta}(S)$ is  well-known to be  smooth, pure of dimension $g_{m,\delta}:=p(m)-\delta=m^2(p-1)+1-\delta$. 
We will often write $g$ for $g_{m,\delta}$ if no confusion arises: this is the {\em geometric genus} of  any  curve in $\V_{m,\delta}$.  

One has the obvious {\em moduli map} 
\begin{equation*}\label{eq:modmap0}
\xymatrix{\psi_{m,\delta} :\V_{m,\delta} \ar[r] &  \M_g,}  
\end{equation*}where $\M_g$ is moduli space of  smooth  genus--$g$  curves, sending a curve $C$ to the class of its normalization. Our objective in this paper is to find conditions on $p$, $m$ and $\delta$ (or equivalently $g$) ensuring the existence of a component $\V$ of $\V_{m, \delta}$, such that $\psi_{m,\delta}|_{\V}$ is either \begin{inparaenum} [(a)] \item {\em generically finite} onto its image, or \item {\em dominant} onto ${{\rm M}}_g$. \end{inparaenum} Note that (a) can happen only for $g\geqslant 11$ and (b) only for $g\leqslant 11$.

We collect our results in the following statement, which, for any $p$, solves the problem for all but finitely many $(g,m)$; for instance, for $m \geqslant 5$ and $p \geqslant 7$ the result yields that the moduli map has maximal rank on some component for any $g$.

\begin{thm}\label{thm:main} With the above notation, one has: \medskip

\noindent $(A)$  For the following values of $p \geqslant 3$, $m$ and $g$ there is an irreducible component $\V$ of $\V_{m,\delta}$, such that the moduli map $\V\to  \M_g$ is dominant:
 
 \begin{itemize}
 \item  $m=1$ and $0 \leqslant g \leqslant 7$;
  \item $m=2$,  $p \geqslant g-1$   and $0 \leqslant g \leqslant 8$;
  \item $m=3$,  $p \geqslant g-2$  and $0 \leqslant g \leqslant 9$;
   \item $m=4$,  $p \geqslant g-3$  and $0 \leqslant g \leqslant 10$;
   \item $m \geqslant 5$,  $p \geqslant g-4$  and $0 \leqslant g \leqslant 11$.
   \end{itemize}
  
\noindent $(B)$ For the following values of $p$, $m$ and $g$ there is an irreducible component $\V$ of $\V_{m,\delta}$, such that the moduli map $\V\to  M_g$ is generically finite onto its image:
 \begin{itemize}
 \item $m=1$ and $p\geqslant g \geqslant 15$;
  \item $2\leqslant m\leqslant 4$,  $p \geqslant 15$  and $g\geqslant 16$;
  \item $m\geqslant 5$,  $p \geqslant 7$   and $ g \geqslant 11$.
  \end{itemize}
\end{thm}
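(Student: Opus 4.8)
The plan is to reduce the statement to a problem about a single well-chosen $K3$ surface by a degeneration/specialization argument, and then to analyze the differential of $\psi_{m,\delta}$ by a standard deformation-theoretic computation. First I would pick, for each range of $(p,m,g)$ in the statement, an explicit $(S,L)\in\K_p$ together with a $\delta$-nodal curve $C\in|mL|$ whose normalization $\widetilde C$ has genus $g$, chosen so that $\widetilde C$ is as general as possible in $\M_g$ for the given $g$: concretely, one wants $\widetilde C$ to be (a) a general curve of genus $g$ when aiming for dominance, or (b) a curve with finite automorphism group and ``enough moduli'' when aiming for generic finiteness. The natural candidates come from smooth $K3$ surfaces containing a line or conic, or from $K3$'s that are limits of the universal family whose Picard lattice is enlarged, so that $|mL|$ acquires curves with controlled nodes (e.g.\ by letting $mL$ degenerate to a union $L_1+\dots+L_m$ of smooth curves meeting transversally and smoothing the $\binom{m}{2}L_i\cdot L_j$ nodes partially — this is exactly the kind of construction that produces the numerical bounds $p\ge g-(m-1)$ appearing in part $(A)$).

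Next I would set up the differential of the moduli map at a point $[S,C]\in\V_{m,\delta}$. Writing $\nu:\widetilde C\to C\subset S$ for the normalization and $N$ for the $\delta$ nodes, the tangent space to $\V_{m,\delta}$ sits in an exact sequence relating the equisingular deformations of $C$ in $S$, the deformations of $(S,L)$ in $\K_p$, and the moduli of the $19$-dimensional family; the composition with $d\psi_{m,\delta}$ lands in $H^1(\widetilde C, T_{\widetilde C})$. Using the conormal/normal bundle sequence for $\nu$ and the standard identification $H^1(\widetilde C,T_{\widetilde C})^{\vee}\cong H^0(\widetilde C,\omega_{\widetilde C}^{\otimes 2})$, the surjectivity (resp.\ injectivity) of $d\psi_{m,\delta}$ becomes a statement about the cokernel (resp.\ kernel) of a multiplication-type map, governed by $H^0$ and $H^1$ of the restricted bundles $\nu^*L$, $\nu^*(mL)$, $N_{\widetilde C/S}$, and $\omega_S|_C=\O_C$ (since $S$ is $K3$). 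Because $\omega_S=\O_S$, the normal bundle $N_{C/S}$ is just $\omega_C(-\sum \text{nodes})$ pulled back, which makes these cohomology groups computable via the adjunction sequence on $S$ and Serre duality; the key vanishing inputs are $H^1(S,\O_S)=0$ and $H^1(S,kL)=0$ for $k\ge1$ (Kawamata–Viehweg / $K3$ vanishing).

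The crucial reduction is that, by upper-semicontinuity of $\dim\coker(d\psi_{m,\delta})$ (resp.\ of $\dim\ker$) along the component $\V$, it suffices to verify maximal rank at one point — and here I would exploit the fact that the moduli map factors, on a suitable component, through the ``Brill–Noether'' or ``limit linear series'' picture: degenerate $S$ to a surface where $C$ becomes a curve of compact type or a chain, count the number of moduli directions coming from smoothing nodes versus those coming from varying $(S,L)$, and match against $\dim\M_g=3g-3$ (for dominance) or against $\dim\V_{m,\delta}=g+19$ minus the generic fiber dimension of $\psi$ (for generic finiteness). The numerology is exactly what forces the case distinctions: $g\le 7,8,9,10,11$ for $m=1,\dots,\ge5$ in part $(A)$ is the condition that $g+19\ge 3g-3$ can be achieved \emph{and} that the $19$-dimensional moduli of $K3$'s actually surjects onto the relevant moduli directions, while in part $(B)$ the bound $p\ge15$ (resp.\ $p\ge7$ for $m\ge5$) is what guarantees the curves one constructs have maximal-rank-enough linear systems that $\psi$ becomes generically injective on tangent spaces.

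The hard part, as usual in this circle of ideas, will be step one done uniformly: constructing, for \emph{every} $(p,m,g)$ in the stated ranges, an actual component $\V$ whose general member has a normalization general enough in $\M_g$. This requires (i) a careful existence-and-irreducibility statement for Severi varieties on $K3$'s with the right number of nodes — which on a general $K3$ may force one to first specialize to a $K3$ with extra Picard rank, produce the nodal curve there, and then argue the nodal curve deforms to curves on nearby general $K3$'s preserving the node count (a non-trivial transversality/obstructedness check, since equisingular deformations can be obstructed) — and (ii) showing that the moduli-theoretic genericity of $\widetilde C$ survives this specialization, for instance by a dimension count comparing the locus swept in $\M_g$ against $3g-3$. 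Everything downstream (the cohomology vanishings, the semicontinuity, the identification of tangent spaces) is formal once the geometric construction is in place; it is pinning down these explicit configurations and their deformation behavior — and checking that the leftover ``finitely many cases'' genuinely fall outside the method rather than being gaps — that constitutes the real work.
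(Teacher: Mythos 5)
Your proposal identifies the right general philosophy (degenerate, then control the fiber or the differential of the moduli map and conclude by semicontinuity), but as written it has a genuine gap at exactly the point where the paper's real work lies, and it misdiagnoses the source of the numerical bounds. The naive plan ``specialize to a degenerate curve, check maximal rank of $d\psi_{m,\delta}$ there, then use semicontinuity'' fails for the natural degenerations: the paper degenerates $(S,L)$ to a union $R=R_1\cup R_2$ of two rational normal scrolls glued along an elliptic normal curve, and over any limit curve on such a surface the fiber of the moduli map contains the entire $15$-dimensional family of choices of the divisor $T\in|T^1_{R'}|$ (the $16$ points where the total space of any smoothing is singular). Since $15>22-2g$ for $g\geqslant 4$, the rank of the differential is \emph{not} maximal at these points, and semicontinuity around them gives nothing. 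The paper circumvents this by pulling the universal Severi variety back to the Hilbert scheme $\H_p$ and splitting the tangent cone to a fiber at a boundary point into a part tangent to the locus $\R'_p$ of reducible surfaces (bounded by $\max\{0,7-g\}$ via an explicit analysis of an incidence variety on $\PP^1\times\PP^1$) and a part transverse to it, measured by its image in $H^0(T^1_{R'})$ (bounded by $\max\{0,15-g\}$ because the $g+1$ nodes of the stable model must lie among the $16$ points of $T$ for the curve to deform off the boundary). Your proposal contains no mechanism for either bound; in particular the crucial role of the $16$ points of $T$ --- which is where both the $g\leqslant 7$ dominance bound for $m=1$ and the $g\geqslant 15$ finiteness bound come from --- is entirely absent.

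Relatedly, your numerology is not the paper's: the chain of bounds $g\leqslant 7,8,9,10,11$ for $m=1,\dots,\geqslant 5$ does not come from $g+19\geqslant 3g-3$ (that inequality only explains the terminal value $11$). It comes from the $m=1$ bound $\max\{0,7-g\}$ on the fiber over the quadric, plus attaching $s\leqslant 4$ nodal rational curves and a tacnodal Chen-type limit curve through two of the $16$ special points, each attachment raising the attainable genus by one and requiring $m\geqslant s+1$; the curves one must construct for $m>1$ are not unions $L_1+\cdots+L_m$ of smooth hyperplane sections but unions of line-chain curves and tacnodal limit curves, and generic finiteness for $m\geqslant 2$ is then propagated by the full-completeness filtration rather than by a tangent-space computation. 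Everything you defer to ``the hard part'' --- the explicit configurations, their deformation behavior, and the fiber analysis --- \emph{is} the proof, while what you present as the formal downstream part (cohomology of normal bundles on a smooth $K3$, Serre duality, $H^1$ vanishings) does not appear in the paper's argument and would not by itself produce either part (A) or part (B).
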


To prove this, it suffices  to exhibit  some specific curve in the universal Severi variety such that a component of the fiber of the moduli map at that curve has the \emph{right dimension}, i.e, $\min \{0, 22-2g\}$. To do this, we argue by degeneration, i.e., we consider partial compactifications $\overline \K_p$ and  $\overline \V_{m,\delta}$ of both $\K_p$ and  $\V_{m,\delta}$ and prove the above assertion for curves in the boundary.

 The partial compactification $\overline \K_p$ is obtained by
adding to $\K_p$ a divisor $\mathfrak S_p$ parametrizing pairs $(S,T)$, where
$S$ is a \emph{reducible $K3$ surface} of genus $p$ that can be realised in
$\PP^ p$ as the union of two rational normal scrolls intersecting along an elliptic
normal  curve $E$, and $T$ is the zero scheme of a section of the
first cotangent sheaf $T^1_{S}$, consisting of $16$ points on $E$. These $16$
points, plus a subtle deformation argument of nodal curves,  play a
fundamental role in our approach for $m=1$.  For $m>1$ we specialize curves in the Severi variety to
suitable unions of curves used for $m=1$ plus other types of limit curves,
namely tacnodal limit curves passing through two of the $16$ special points.

The paper is organized as follows. The short \S \ref {S:setup} is devoted to  preliminary results, and we in particular define a slightly broader notion of Severi variety including the cases of curves with more than $\delta$ nodes and of reducible curves.  
In \S \ref {S:stableK3} we introduce the partial compactification $\overline \K_p$ we use (cf.\,\cite{fri2,Ku,pp}). In \S \ref {sec:limnod1} we start the analysis of the case $m=1$, and we introduce the curves in $\overline \V_{1,\delta}$ over the reducible surfaces in $\mathfrak S_p$ that we use for proving our results. Section \ref {sec:modulimap} is devoted to the study of the fiber of the moduli map for the curves introduced in the previous \S \ref {sec:limnod1}: this is the technical core of this paper. In \S \ref {sec:m1} we prove the $m=1$ part of Theorem \ref {thm:main}. In  \S \ref {sec:limnod} we introduce the tacnodal limit curves mentioned above, which are needed to  work out the cases $m>1$ of 
Theorem \ref {thm:main} in \S\S \ref {sec:dominance}--\ref  {sec:genfin}.

The study of Severi varieties is classical and  closely related to modular properties. For the case of nodal plane curves the traditional reference is Severi's wide exposition in \cite [Anhang F] {se}, although already in Enriques--Chisini's famous book \cite [vol. III, chapt. III, \S 33] {enr} families of plane nodal curves with general moduli have been considered. The most important result on this subject is Harris' proof in \cite {Ha} of the so--called \emph{Severi conjecture}, which asserts that the Severi variety of irreducible plane curves of degree $d$ with $\delta$ nodes is irreducible. 

In recent times there has been a growing interest in Severi varieties for $K3$ surfaces and their modular properties. In their seminal works \cite {MM,mu0,mu} Mori and Mukai proved that in the case of smooth curves in the hyperplane section class, i.e., what we denoted here by $\V_{p,1,0}$, the modular map: \begin{inparaenum}[(a)] \item dominates $\M_p$, for $p\leqslant 9$ and $p=11$, whereas this is false for $p=10$, and  \item is generically finite between $\V_{p,1,0}$ and its image in $\M_p$ for $p=11$ and $p\geqslant 13$, whereas this is false for $p=12$. \end{inparaenum}
In \cite {clm} one gives a different proof of these results, proving, in addition, that $\V_{p,1,0}$ birationally maps to its image in $\M_p$ for $p=11$ and $p\geqslant 13$. The case of $\V_{p,m,0}$, with $m>1$, has been studied in \cite {clm1}. 

As for $\delta>0$, in \cite {fkps} one proves that $\V_{p,1,\delta}$ dominates $\M_g$ for $2\leqslant g<p\leqslant 11$. Quite recently, Kemeny, inspired by ideas of Mukai's,  and using  geometric constructions of appropriate curves on $K3$ surfaces with high rank Picard group,  proved  in \cite {Ke} that there is an irreducible component of  $\V_{p,m,\delta}$ for which the moduli map is generically finite onto its image for  all but finitely many values of $p$, $m$ and $\delta$.  Kemeny's results partly intersect with part $(B)$ of our Theorem \ref {thm:main};  his results are slightly stronger than ours for $m \leqslant 4$,  however our results are  stronger and in fact optimal  for $m\geqslant 5$ (if $p \geqslant 7$). Moreover part $(A)$, which is also optimal for $m\geqslant 5$ (except for some very low values of $p$),
 is completely new. 
Note that a  different proof of the case $m=1$ in Theorem \ref {thm:main} has recently been given in \cite{cfgk}. 

To finish, it is the case to mention that probably the most interesting open problems on the subject are the following:

\begin{quest} For $S\in \K_p$ general, is the Severi variety $V_{p,m,\delta}(S)$ irreducible? Is the universal Severi variety $\V_{p,m,\delta}$ irreducible? 
\end{quest}

So far it is only known that $\V_{p,1,\delta}$ is irreducible for  $3\leqslant g\leqslant  12$ and $g\neq 11$  (see \cite {cd}). 

\subsection*{Terminology and conventions}\label{ss:term}  We work over $\mathbb{C}$. For $X$ a Gorenstein variety, we denote by $\O_X$ and $\omega_X$ the  structure sheaf and the canonical line bundle, respectively, and $K_X$ will denote a canonical divisor of $X$. If $x\in X$, then $T_{X,x}$ denotes the Zariski tangent space to $X$ at $x$. For $Y \subset X$ a subscheme, $\I_{Y/X}$ (or simply  $\I_{Y}$ if there is no danger of confusion) will denote its ideal sheaf whereas $\N_{Y/ X}$ its normal sheaf. For line bundles we will sometimes abuse notation and use the additive notation to denote tensor products. Finally, we will denote by $L^*$ the inverse of a line bundle $L$ and by $\equiv$ the linear equivalence of Cartier divisors. 

\subsection*{Acknowledgements} The first three authors  have been supported by the GNSAGA of Indam and by the PRIN project ``Geometry of projective varieties'', funded by the Italian MIUR.

The authors wish to thank M. Kemeny for interesting and useful conversations on the subject of this paper.   We also thank the referee for a very careful reading and for making several observations and suggestions that improved the readability of the paper, as well as discovering some inaccuracies in the first version.

\section{Preliminaries }\label{S:setup}

 To prove our main results, we will need  to consider nodal, {\it
reducible} curves. To this end we will work with a slightly broader notion of {\it Severi variety} than the one from the introduction. 

Let $|D|$ be a base point free complete linear system on a smooth $K3$ surface $S$. Consider the quasi-projective scheme $V_{|D|,\delta}(S)$ parametrizing pairs $(C,\nu_C)$ such that\\
\begin{inparaenum}
\item $C$ is a reduced (possibly reducible) nodal curve in $|D|$;\\
\item  $\nu_C$ is a subset of $\delta$ of its nodes, henceforth called the {\it marked nodes} (called {\it assigned nodes} in \cite{Tan}), such that
 the normalization $\widetilde C\to C$ at $\nu_C$ is $2$-connected.
\end{inparaenum}

Then, by \cite[Thms. 3.8 and 3.11]{Tan}, one has:\\
\begin{inparaenum}
\item[(i)] $V_{|D|,\delta}(S)$ is smooth of codimension $\delta$ in $|D|$;\\
\item[(ii)] in any component of $V_{|D|,\delta}(S)$, the general pair $(C,\nu_C)$ is such that $C$ is irreducible with precisely $\delta$ nodes.
\end{inparaenum}

\noindent We leave it to the reader to verify that the conditions in \cite{Tan} are in fact equivalent to ours. We call $V_{|D|,\delta}(S)$ the {\it Severi variety} of nodal curves in $|D|$ with $\delta$ marked nodes. This definition is different from others one finds in the literature (and even from the one in the introduction!). Usually, in the Severi variety  one considers only irreducible curves $C$ with exactly $\delta$ nodes. With our definition we consider the desingularization of a partial compactification of the latter variety.
This will be useful for our purposes.

We define the $(m,\delta)$-universal Severi variety $\V_{p,m,\delta}$ (or simply $\V_{m,\delta}$ when $p$ is understood) to be the quasi-projective variety (or stack) parametrizing  triples $(S,C,\nu_C)$, with $S=(S,L) \in \K_p$ and $(C,\nu_C) \in V_{|mL|,\delta}(S)$. It is smooth and pure of dimension $19+p(m)-\delta=19+g_{m,\delta}$ and the general element in any component is a triple $(S,C,\nu_C)$ with $C$ irreducible with exactly 
$\delta$ nodes. We may simplify notation and identify 
$(S,C,\nu_C)\in \V_{p, m,\delta}$ with the curve $C$, when the surface $S$ and the set of nodes $\nu_C$ are intended.   

One has the projection 
\[ 
\xymatrix{\phi_{m,\delta}: \V_{m,\delta} \ar[r] & \K_p} \] 
whose fiber over $S \in \K_p$ is the Severi variety $V_{m,\delta}(S):=V_{|mL|,\delta}(S)$. Similarly, we have a {\em moduli map} 
\begin{equation*}\label{eq:modmap}
\xymatrix{\psi_{m,\delta} :\V_{m,\delta} \ar[r] & \overline{{\rm M}}_{g}} 
\end{equation*}
(where $\overline{{\rm M}}_{g}$ is the moduli space of genus $g$ stable curves
and we recall that $g=g_{m,\delta}:=p(m)-\delta$),
which sends a curve $C$ to the stable model of the partial normalization of $C$ at the $\delta$ marked nodes.

For $S\in \K_p$ and integers $0 \leqslant \delta\leqslant \delta'  \leqslant p(m)$, one has a correspondence
\[
X_{m, \delta, \delta'}(S):=\{(C,C')\in V_{m,\delta}(S)\times V_{m,\delta'}(S)\, | \, C=C' \,\,\, \text{and}\,\,\, \nu_C\subseteq \nu_{C'}\}
\]
with the two projections 
\[
\xymatrix{
V_{m,\delta}(S)&& X_{m, \delta, \delta'}(S) \ar[rr]^{p_2} \ar[ll]_{p_1}&& V_{m,\delta'}(S), 
 }
\]
which are both finite onto their images. Precisely:\\
\begin{inparaenum}[$\bullet$]
\item $p_2$ is surjective, \`etale of degree $\delta' \choose{\delta}$, hence $\dim(X_{m, \delta, \delta'}(S))=g_{m,\delta'}$;\\
\item $p_1$ is birational onto its image, denoted by  $V_{[m,\delta,\delta']}(S)$, which is pure with
 \[
 \dim (V_{[m,\delta,\delta']}(S))=g_{m,\delta'}=\dim(V_{m,\delta}(S))-(\delta'-\delta).\]
 \end{inparaenum}
 Roughly speaking, the variety $V_{[m,\delta,\delta']}(S)$ is the proper subvariety of $V_{m,\delta}(S)$ consisting of curves with at least $\delta'$ nodes, $\delta$ of them marked. The general point of any component of $V_{[m,\delta,\delta']}(S)$ corresponds to a curve with exactly $\delta'$ nodes. So one has the filtration
\[
V_{[m,\delta,p(m)]}(S)\subset V_{[m,\delta,p(m)-1]}(S)\subset \ldots \subset V_{[m,\delta,\delta+1]}(S)\subset V_{[m,\delta,\delta]}(S) =V_{m,\delta}(S)
\]
in which each variety has codimension $1$ in the subsequent. 

\begin{remark}\label{rem:filtr} Given a component $V$ of $V_{m,\delta}(S)$ and 
$\delta'>\delta$, there is no   a priori 	guarantee that $V\cap V_{[m,\delta, \delta']}(S)\neq \emptyset$. If this is the case, then each component of $V\cap V_{[m,\delta, \delta']}(S)$ has codimension $\delta'-\delta$ in $V$ and 
we say that $V$ is \emph{$\delta'$--complete}. If $V$ is $\delta'$--complete, then it is also $\delta''$--complete for $\delta<\delta''<\delta'$. If $V$ is $p(m)$--complete we say it is \emph{fully complete}, i.e., $V$ is fully complete if and only if it contains a point parametrizing a rational nodal curve. 

If $V$ is a component of $V_{m,\delta}(S)$ and $W$ a component of $V_{m,\delta'}(S)$, such that $\dim(X_{m,\delta, \delta'}\cap (V\times W))=g_{m,\delta'}$, then $X_{m,\delta, \delta'}\cap (V\times W)$ dominates a component $V'$ of $V_{[m,\delta,\delta']}$ contained in $V$, whose general point is a curve in $W$ with $\delta$ marked nodes. In this case we will abuse language and say that $W$ is \emph{included} in $V$. 
\end{remark}

Of course one can make a relative version of the previous definitions, and make sense of 
the subscheme $\V_{[m,\delta,\delta']}\subset \V_{m,\delta}$, which has dimension
$19+g_{m,\delta'}$, of the filtration 
\[
\V_{[m,\delta,p(m)]}\subset \V_{[m,\delta,p(m)-1]}\subset \ldots \subset \V_{[m,\delta,\delta+1]}\subset \V_{m,\delta},
\]
of the definition of a $\delta'$--complete component $\V$ of $\V_{m,\delta}$, for $\delta'>\delta$, of fully complete components,  etc.

For $\delta'>\delta$,  the image of $\V_{[m,\delta,\delta']}$ via $\psi_{m,\delta}$ sits in the $(\delta'-\delta)$--codimensional locus $\Delta_{g_{m,\delta},\delta'-\delta}$ of  $(\delta'-\delta)$-- nodal curves in $\overline \M_{g_{m,\delta}}$.

\section{Stable limits of $K3$ surfaces}\label{S:stableK3} 

In this section we consider some reducible surfaces (see \cite{clm}), which are {\em limits} of smooth, polarized $K3$ surfaces, in the sense of the following:

\begin{definition}\label{deformation}
Let $R$ be a compact, connected analytic variety. A variety 
$\Y$ is said to be a {\rm deformation of $R$} if there exists a proper, flat morphism $$\pi:\Y\to\mathbb D=\{t\in\mathbb C | \mid t\mid<1\}$$ such that 
$R=\Y_0:=\pi^*(0)$. Accordingly, $R$ is said to be a {\rm flat limit of $\Y_t:= \pi^*(t)$}, for $t \neq 0$. 

If $\Y$ is smooth and if any of the $\pi$-fibers has at most normal crossing singularities, $\Y\stackrel{\pi}{\longrightarrow} \mathbb D$ is 
said to be a {\rm{semi-stable deformation of $R$}} (and $R$ is a {\rm semi-stable limit} of $\Y_t$, $t \neq 0$). If, in addition, $\Y_t$ is a smooth $K3$ surface, for $t \neq 0$, 
then $R$ is a {\rm semi-stable limit of $K3$ surfaces}.

If, in the above setting, one has a line bundle $\L$ on $\Y$, with $L_t=\L_{|\mathcal Y_t}$ for $t \neq 0$ and $L=\L_{|R}$, then one says that $(R,L)$ is a \emph{limit} of $(\mathcal Y_t,L_t)$ for $t \neq 0$. 
\end{definition}

Let $p=2l+\varepsilon\geqslant 3$ be an integer with $\varepsilon=0,1$ and $l \in \mathbb N$. If $E' \subset \PP^{p}$ is an elliptic normal curve of degree $p+1$, we set $L_{E'}:=\O_{E'}(1)$. 
Consider two {\it general} line bundles $L_1, L_2 \in \Pic^2(E') $ with $L_1\neq L_2$.
We denote by $R'_i$ the rational normal scroll of degree $p-1$ in
$\PP^p$ described by the secant lines to $E'$ spanned by the divisors in $|L_i|$, for $1\leqslant i\leqslant 2$. We have 
\[ R'_i \cong  
\begin{cases} 
\PP^1 \times \PP^1 &  \; \mbox{if $p=2l+1$ is odd and $\O_{E'}(1) \not \sim (l+1)L_i$, for $i=1,2$ } \\ 
 \mathbb{F}_1 &  \; \mbox{if $p=2l$ is even}.
\end{cases} 
\] 

The surfaces $R'_1$ and $R'_2$ are $\PP^ 1$--bundles on $\PP^ 1$. 
  We denote by $\sigma_i$ and $F_i$ a minimal section and a fiber of the ruling of $R_i'$, respectively, so that $\sigma_i^ 2=\varepsilon -1$ and $F_i^ 2=0$, and   \begin{equation}
  \label{eq:A}
L_{R'_i}:=\O_{R'_i}(1) \simeq \O_{R'_i}(\sigma_i + lF_i), \;\;\; \text{for}\;\;\;  1 \leqslant i \leqslant 2.
\end{equation}

By \cite[Thm.\;1]{clm}, $R'_1$ and $R'_2$  intersect transversely along $E'$, which is anticanonical on $R'_i$, i.e.
 \begin{equation}
  \label{eq:B1}
E'\equiv -K_{R'_i} \equiv 2\sigma_i+(3-\epsilon)F_i \;\;\; \text{for}\;\;\;  1 \leqslant i \leqslant 2.
\end{equation}
Hence $R'=R'_1\cup R'_2$ has normal crossings and $\omega_{R'}$ is trivial.  We set $L_{R'}:=\O_{R'}(1)$. 
The first cotangent sheaf $T^1_{R'}$ (cf. \cite[\S\;1]{fri})  is the degree $16$ line bundle on $E'$
 \begin{equation}
  \label{eq:B}
T^1_{R'}\cong \N_{E'/R'_1}\otimes \N_{E'/R'_2}\cong L_{E'}^ {\otimes 4}\otimes (L_1\otimes L_2)^ {\otimes (3-2l-\epsilon)},
\end{equation}
the last isomorphism coming from \eqref {eq:A} and \eqref {eq:B1}.

The surface $R'$ is a flat limit of smooth $K3$ surfaces in $\PP^p$. Namely, if $\H_p$ is the component of the Hilbert scheme of surfaces in $\PP^ p$ containing $K3$ surfaces $S$ such that $[S, \O_S(1)]\in \K_p$, then $R'$ sits in $\H_p$ and, for general choices of $E', L_1, L_2$, the Hilbert scheme $\H_p$ is smooth at $R'$ (see  \cite{clm}).  However, the fact that $T^1_{R'}$ is non-trivial implies that $R'$ is not a semi--stable limit of $K3$ surfaces: indeed, the total space of every flat deformation of $R'$ to $K3$ surfaces in $\H_p$ is singular along a divisor $T\in |T^1_{R'}|$ (cf. \cite[Prop. 1.11 and \S\;2]{fri}).  More precisely (see again \cite{clm} for details), if  
\begin{equation}\label{eq:defo}
\xymatrix{
R'\ar[d]\,\, \ar@{^{(}->}[r]& \R'\ar[d]^ {\pi'} \ar@{^{(}->}[r] \ar[dr]^ {{\rm pr_2}} &\mathbb D\times \PP^ p\ar[d] \\\
0\,\, \ar@{^{(}->}[r]& \mathbb D&\PP^ p
 }
\end{equation}
is a deformation of 
$R'$ in $\mathbb P^p$ whose general member is a smooth $K3$ surface, then  $\R'$ has  double points at the points of a divisor $T\in \vert T^1_{R'}\vert$ 
associated to tangent direction to $\H_p$ at $R'$ determined by the  deformation \eqref {eq:defo}, via the standard map
\begin{equation}\label{standard}
T_{\H_p, R'}\cong H^ 0(R', \N_{R'/\PP^ p})\to H^ 0(T^ 1_{R'}).
\end{equation}
If $T$ is reduced (this is the case if \eqref {eq:defo} is general enough), then  the tangent cone to $\R'$ at each of the 16 points of $T$ has rank 4. 
In this case, by blowing up $\R'$ at the points of $T$, the exceptional divisors are rank $4$ quadric surfaces and, 
by contracting each of them along a ruling on one of the two irreducible components of the strict transform of $R'$, one obtains
a small resolution of singularities $\Pi: \R \to \R'$ and a 
semi-stable degeneration $\pi: \R {\longrightarrow}\mathbb D$ of $K3$ surfaces, with central fiber $R := R_1\cup R_2$, where  $R_i=\Pi^{-1}(R'_i)$, for $i=1,2$ and still $\omega_R\cong \O_R$.  Note that we have a line bundle
on $\R$, defined as  $\L:=\Pi^ *({\rm pr}_2^ *(\O_{\PP^ p}(1)))$. So $\pi: \R {\longrightarrow}\mathbb D$ is a deformation of polarized $K3$ surfaces, and we set 
$L_R:=\L_{|R}=\Pi_{|R}^ *(L_{R'})$.

We will abuse notation and terminology by identifying curves on $R'$ with their proper transforms on $R$ (so we may talk of \emph{lines} on $R$, etc.). 

We will set  $E = R_1 \cap R_2$; then $E\cong E'$ (and we will often identify them). We have $T^1_{R}\simeq \O_E$. 
If the divisor $T$ corresponding to the deformation  \eqref {eq:defo} is not reduced, the situation can be handled in a similar way, but we will not dwell on it, because we will not need it. 

The above limits $R$ of $K3$ surfaces are  \emph{stable},  Type II degenerations according to the Kulikov-Persson-Pinkham classification  of semi-stable degenerations of $K3$ surfaces (see \cite{Ku, pp}).

 By \cite[Thm. 4.10]{fri2} there is a normal, separated partial compactification $\overline{\K}_p$ of $\K_p$ obtained by adding to $\K_p$ a smooth divisor consisting of various components corresponding to the various kinds of 
Type II degenerations of  $K3$ surfaces. One of these components, which we henceforward call $\mathfrak{S}_p$,
corresponds to the degenerations we mentioned above. Specifically, points of $\mathfrak{S}_p$ parametrize  isomorphism classes of  pairs $(R',T)$ with $R'=R'_1 \cup R'_2$ as above and $T \in |T^1_{R'}|$, cf. \cite[Def. 4.9]{fri2}.  Since all our considerations will be local around general members of $\mathfrak{S}_p$, where the associated $T$ is reduced, we may and will henceforth assume  (after substituting $\overline{\K}_p$ and $\mathfrak{S}_p$ with dense open subvarieties) that $\overline{\K}_p$ is smooth, 
\[ \overline{\K}_p = \K_p \cup \mathfrak{S}_p \]
and that $T$ is reduced for all $(R',T) \in \mathfrak{S}_p$. Thus, again since  all our  considerations will be local around such pairs, we may identify $(R',T)$ with the surface $R=R_1 \cup R_2$, as above, with
$R_1\cong R'_1$ and $R_2$  the blow--up of $R'_2$ at the $16$ points of $T$ on $E'$. If $T=p_1+\ldots+p_{16}$, we denote by $\mathfrak{e}_i$ the exceptional divisor on $R_2$ over $p_i$, for $1\leqslant i\leqslant 16$, and 
we set $\mathfrak{e}:=  \mathfrak e_1+\cdots+\mathfrak e_{16}$.

To be explicit, let us denote by $\R'_p\subset \H_p$ the locally closed subscheme whose points correspond to unions of scrolls $R'=R'_1\cup R'_2$ with $L_1,L_2$ general as above  and such that $(R',T) \in \mathfrak{S}_p$ for some
$T \in |T^1_{R'}|$. 
 One has an obvious dominant  morphism
$\R'_p \to {\rm M}_1 $
whose fiber over the class of the elliptic curve $E'$, modulo projectivities ${\rm PGL}(p+1,\mathbb C)$, is an open subset of ${\rm Sym}^ 2({\rm Pic}^ 2(E'))\times {\rm Pic}^ {p+1}(E')$ modulo the action of ${\rm Aut}(E')$. Hence 
\[
\dim (\R'_p)=p^ 2+2p+3,\,\,\, \text{whereas}\,\,\, \dim(\H_p)=p^ 2+2p+19, 
\]
so that for $R'\in \R'_p$ general, the \emph{normal space} $N_{R'/ \R'_p}=T_{\H_p,R'}/ T_{\R'_p, R'}$ has dimension 16.
The map \eqref{standard} factors through a map
\[
N_{R'/\R'_p} \to H^ 0(T^ 1_{R'})
\]
which is an isomorphism (see \cite {clm}).

We denote by $\R_p$ the ${\rm PGL}(p+1,\mathbb C)$--quotient of $\R'_p$, which, by the above argument, has dimension $\dim(\R_p)=3$.
 By definition, there is a surjective   morphism
\[
\pi_p: \mathfrak{S}_p\to \R_p
\]
whose fiber over (the class of) $R'$ is a dense, open subset of  $|T^ 1_{R'}|$, which has dimension $15$ (by \eqref {eq:B}, given $L_1,L_2$ and $L_{E'}$, then $\O_{E'}(T)$ is determined).  This, by the way, confirms that $\dim( \mathfrak{S}_p)=18$. 

 The universal Severi variety $\V_{m,\delta}$ has  a partial compactification $\overline{\V}_{m,\delta}$ (see \cite[Lemma 1.4]{ck}), with a morphism 
\[ \overline \phi_{m,\delta}: \overline {\V}_{m,\delta} \to \overline{\K}_p\] 
extending $\phi_{m,\delta}$, where the fiber $\overline{V}_{m,\delta}(R)$
over a general 
$R =R_1\cup R_2 \in \mathfrak{S}_p$ consists of  all nodal curves $C\in |mL_R|$, with $\delta$ marked, non--disconnecting nodes in the smooth locus of $R$, i.e., the partial normalization of $C$ at the $\delta$ nodes is connected. 
(There exist more refined partial compactifications of $\V_{m,\delta}$, for instance by adding curves with tacnodes along $E$, as considered in \cite {C, galknu}, following  \cite{ran}; we will consider such curves in \S \ref {sec:limnod}, but we do not need them now). 
The total transform $C$ of a nodal curve $C'\in |mL_{R'}|$ with $h$ marked nodes in the smooth locus of $R'$ and $k$ marked nodes at points of $T$, with $h+k=\delta$,  lies in $\overline{V}_{m,\delta}(R)$, since it contains
the exceptional divisors on $R_2$ over the $k$ points of $T$, and has a marked  node on each of them off $E$.  As for the smooth case, $\overline{V}_{m,\delta}(R)$, is smooth, of dimension $g=p(m)-\delta$. We can also consider $\delta'-\delta$ codimensional subvarieties of $\overline{V}_{m,\delta}(R)$ of the form $\overline{V}_{[m,\delta,\delta']}(R)$, with $\delta<\delta'$, etc. 

Finally, there is an \emph{extended moduli map}
\[ \overline \psi_{m,\delta}: \overline \V_{m,\delta} \longrightarrow \overline{\rm M}_g.\]

We end this section with a definition, related to the construction of the surfaces $R'$, that we will need later. 

\begin{definition} \label{def:nonpiaceraaconcy}
  Let $E$ be a smooth elliptic curve with two degree-two line bundles $L_1$ and $L_2$ on it. 

For any integer $k \geqslant 0$, we define the automorphism $\phi_{k,E}$ on $E$ that sends $x \in E$ to the unique point $y \in E$ satisfying
\begin{eqnarray}
  \label{eq:relgen-odd}
  \O_E(x+y) & \cong &L_2^ { \otimes \frac{k+1}{2}}\otimes (L_1^ *)^{\otimes \frac{k-1}{2}}, \; \; \mbox{$k$ odd;} \\
\label{eq:relgen-even}  \O_E(x-y) &\cong & (L_2\otimes L_1^ *)^{\otimes  \frac{k}{2}} , \; \; \mbox{$k$ even.} 
\end{eqnarray}

For any $x \in E$, we define the effective divisor $D_{k,E}(x)$ to be the degree $k+1$ divisor
\begin{equation}
  \label{eq:defD}
  D_{k,E}(x)=x+\phi_{1,E}(x) + \cdots + \phi_{k,E}(x).
\end{equation}
\end{definition}

\section{Limits of nodal hyperplane sections on reducible $K3$ surfaces} \label{sec:limnod1}

Given $R\in \mathfrak{S}_p$  , with  $R=R_1\cup R_2$  as explained in the previous section,  we will now describe certain curves in $\overline{\V}_{1,\delta}$  lying on $R$.

\begin{definition} \label{def:compopartenza} Let 
$d, \ell$ be non--negative integers such that $\ell \leqslant 16$. Set $\delta=d+\ell$ and assume $\delta \leqslant p-\epsilon$, where 
\[ \epsilon= 
\begin{cases} 
1 &  \; \mbox{if}\,\,\,  \ell=0, \\ 
0 & \; \mbox{if}\,\,\,  \ell >0.\\ 
\end{cases} 
\] 
We define  $W_{d,\ell}(R)$ to be the set of  reduced  curves in   $\overline{V}_{1,\delta}(R)$:\\
\begin{inparaenum} [(a)]
\item having exactly $d_1:=\lfloor \frac{d}{2} \rfloor$ nodes on $R_1-E$ and $d_2:=\lceil \frac{d}{2} \rceil$ nodes on 
$R_2-E-\mathfrak e$, hence they split off $d_i$ lines on $R_i$, for $i=1,2$;\\
\item such that the union of these $d=d_1+d_2$ lines is connected;\\
\item   containing exactly $\ell$ irreducible components of $\mathfrak e$, so they have  $\ell$ further nodes, none of them lying on $E$. 
\end{inparaenum}

These curves have exactly $\delta$ nodes on the smooth locus of $R$, which are the {\em marked} nodes. We write $W_{\delta}(R)$ for $W_{\delta,0}(R)$. 

For any curve $C$ in $W_{d,\ell}(R)$, we denote by $\mathfrak C$ the connected union of $d$ lines as in (b), called the \emph{line chain} of $C$, and by $\gamma_i$
the irreducible component of the residual curve to $\mathfrak C$ and to the $\ell$ components of the exceptional curve $\mathfrak e$ on $R_i$, for $i=1,2$.\end{definition}

Curves in  $W_{d,\ell}(R)$ are total transforms of curves on $R'=\pi_p(R)$  with $d$ marked nodes on $R'\setminus E'$ and passing through $\ell$ marked points in $T$.   
Members of $W_{\delta}(R)$ are shown in the following pictures, which also show their images via the moduli map $\overline \psi_{1,\delta}$ (provided $g \geqslant 3$ if $\delta$ is odd and $g \geqslant 2$ if $\delta$ is even, otherwise the image in moduli is different). The curves $\gamma_i \subset R_i$, $i=1,2$,  are mapped each to one of the two rational components of the image curve in $\overline{\M}_{g}$. The situation is similar for curves in $W_{d,\ell}(R)$, with $\ell>0$. 
\begin{figure}[ht] 
\[\begin{array}{ll}
\includegraphics[width=8cm]{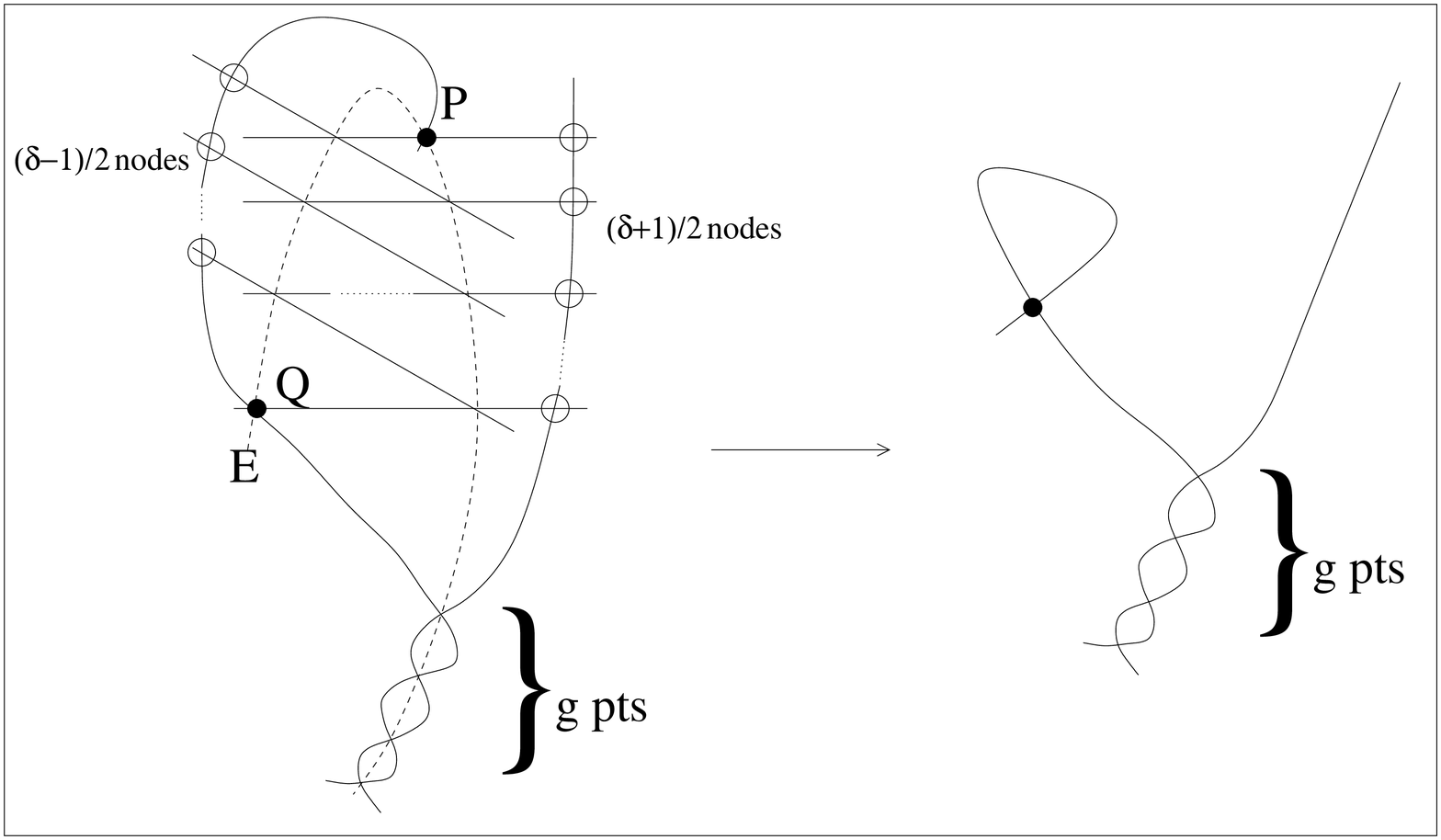} & \includegraphics[width=8cm]{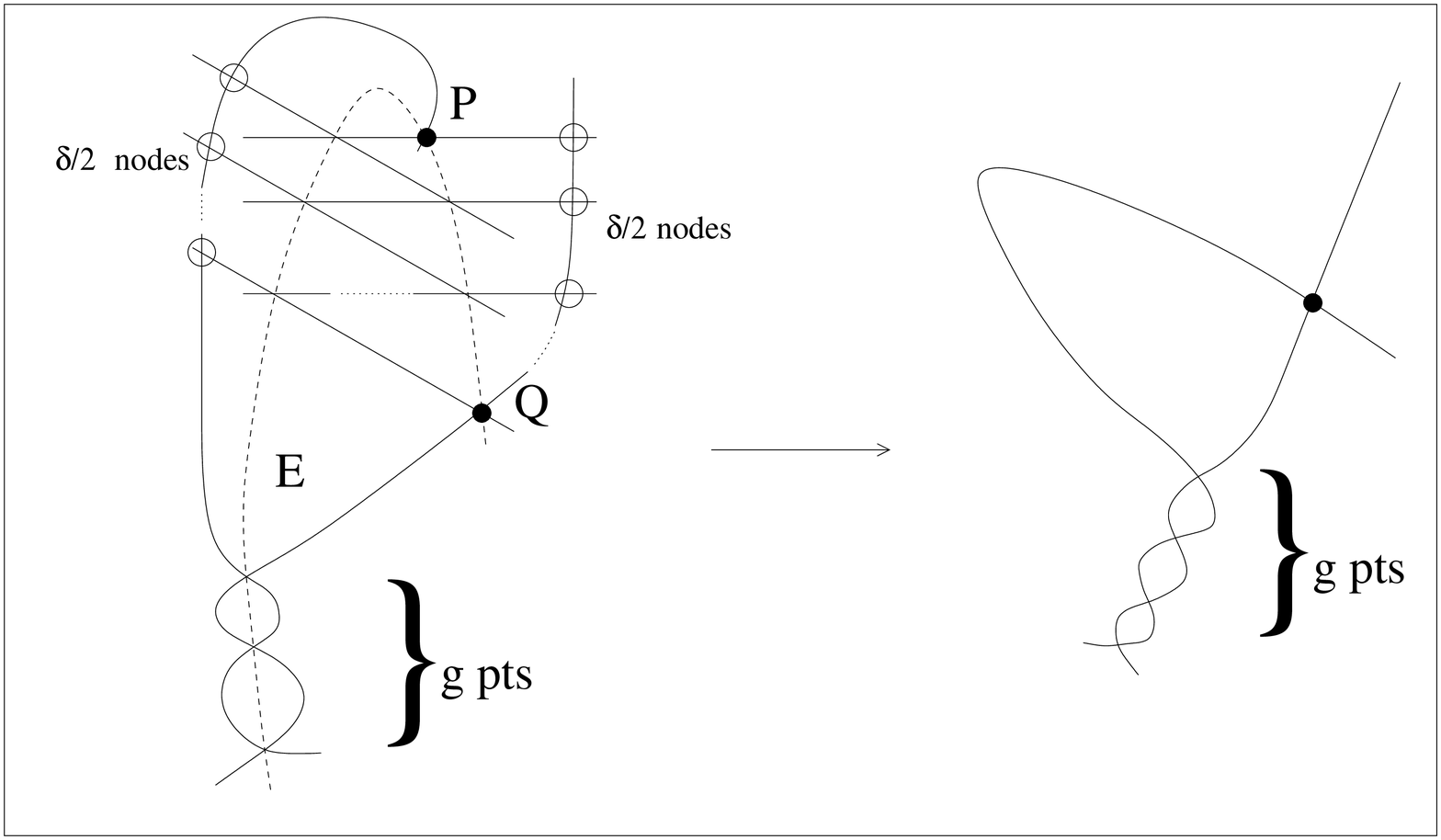} \\
\end{array}
\]
\caption{Members of $W_{\delta}(R)$ when $\delta$ is odd (left) and 
even (right).}
\label{fig:dis}
\end{figure}

The condition  $\delta \leqslant p-\epsilon$ guarantees that the normalization of   a curve in $W_{d, \ell}(R)$  at the $\delta$ marked nodes is connected.
  
The points $P$ and $Q$ in the picture are called {\it the distinguished points of the curve in $W_{d,\ell}(R)$}. They
satisfy the relation $\phi_{d,E}(P)=Q$ (equivalently, $\phi_{d,E}(Q)=P$ if $d$ is odd).
Note that $P,Q \in \gamma_1$ if $\delta$ is odd, whereas $P \in \gamma_1$ and $Q \in \gamma_2$ if $\delta$ is even.

We will consider the pair of distinguished points as an element $P+Q \in \Sym^2(E)$ when  $d$ is odd, whereas we will consider the pair as an ordered element $(P,Q) \in E \x E$ when  $d$ is even, since we then can distinguish $P$ and $Q$, as $P\in \gamma_1$ and $Q\in \gamma_2$.

Recalling \eqref{eq:defD}, we note that $D_{d,E}(P)$ (equivalently, $D_{d,E}(Q)$ if $d$ is odd) is the (reduced) divisor of intersection points of the line chain of the curve in $W_{\delta}(R)$ with $E$.

\begin{proposition}\label{prop:rightdim} Assume  $0 \leqslant \delta \leqslant p-1$.
 Then $W_{\delta}(R)$ is a smooth, dense open subset of a component of dimension $g=p-\delta$ of $\overline V_{1,\delta}(R)$.
\end{proposition}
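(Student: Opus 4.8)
The plan is to realize $W_\delta(R)$ as (a dense open subset of) a component of $\overline V_{1,\delta}(R)$ by first producing a curve of the prescribed type, then showing the locus of such curves has the expected dimension $g = p-\delta$, and finally invoking the general smoothness statement for $\overline V_{1,\delta}(R)$ (dimension $g = p(1)-\delta = p-\delta$, noted in \S\ref{S:stableK3}) to conclude that $W_\delta(R)$ is open in a component rather than contained in a bigger one.

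First I would construct a member of $W_\delta(R)$ explicitly. Pick $d_1 = \lfloor \delta/2\rfloor$ general points on $E$ spanning secant lines to $E$ that lie on $R_1$ (these exist because, by \eqref{eq:A}--\eqref{eq:B1}, the lines on the scroll $R_i'$ are precisely such secants, or rulings in the appropriate case), and similarly $d_2 = \lceil \delta/2\rceil$ such lines on $R_2$, chosen so their union $\mathfrak C$ is connected; the divisor they cut on $E$ is $D_{\delta,E}(P)$ for a suitable distinguished point $P$, cf.\ the remark following Definition~\ref{def:compopartenza} and Definition~\ref{def:nonpiaceraaconcy}. The residual to $\mathfrak C$ in $|L_{R_i}|$ is $|L_{R_i} - (\text{those lines})|$, and $\gamma_i$ is a general member of this residual system; one checks $\gamma_1 \cup \gamma_2 \cup \mathfrak C$ is a connected nodal curve in $|L_R|$ whose only singularities off $E$ are the $\delta$ prescribed nodes (each an intersection of two lines, or of a line with a $\gamma_i$), and whose partial normalization at these $\delta$ nodes is connected precisely because of the hypothesis $\delta \le p-1$, which is exactly the content of the paragraph ``The condition $\delta \le p-\epsilon$ guarantees\dots''. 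This shows $W_\delta(R)\neq\emptyset$ and that it lies in $\overline V_{1,\delta}(R)$.

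Next I would compute $\dim W_\delta(R)$ by a parameter count. A curve in $W_\delta(R)$ is determined by: the choice of distinguished point(s), i.e.\ a point of $\Sym^2 E$ (for $\delta$ odd) or of $E\times E$ subject to the constraint $\phi_{\delta,E}(P)=Q$ (for $\delta$ even) — in either case a $1$-dimensional choice, which pins down the line chain $\mathfrak C$; and then the choice of $\gamma_1 \in |L_{R_1} - \mathfrak C_1|$ and $\gamma_2 \in |L_{R_2} - \mathfrak C_2|$, where $\mathfrak C_i$ is the part of the chain on $R_i$. By \eqref{eq:A}, $L_{R_i} \simeq \O_{R_i}(\sigma_i + l F_i)$, and removing $d_i$ lines (each a fiber $F_i$ in the even case, or a secant line $\equiv \sigma_i + F_i$ type class depending on parity) drops $h^0$ accordingly; a Riemann–Roch / direct computation on the scroll gives $\dim|L_{R_i}-\mathfrak C_i| = l - d_i + (\text{const})$, and the two residual systems must agree on $E$ with the divisor cut by the chain, which is what the relation $\phi_{\delta,E}(P)=Q$ encodes. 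Summing the contributions and using $d_1 + d_2 = \delta$, $p = 2l+\varepsilon$, one should land on exactly $g = p-\delta$; I would organize this so the ``$1$ for the distinguished point'' plus ``$h^0$-counts for $\gamma_1,\gamma_2$'' telescopes cleanly, paying attention to the parity of $\delta$ and of $p$ and to whether $R_i' \cong \PP^1\times\PP^1$ or $\FF_1$.

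Finally, since $\overline V_{1,\delta}(R)$ is smooth of pure dimension $g = p-\delta$ everywhere (stated in \S\ref{S:stableK3}, following \cite{Tan}), a locally closed subset of dimension $g$ that consists of points of $\overline V_{1,\delta}(R)$ is automatically dense in a unique irreducible component. It remains to see that $W_\delta(R)$ is itself irreducible (so that it determines a single component) and open in that component: irreducibility follows because the parametrization above fibers $W_\delta(R)$ over an irreducible base (the $\Sym^2 E$ resp.\ the graph of $\phi_{\delta,E}$) with irreducible fibers (products of projective spaces $|L_{R_i}-\mathfrak C_i|$), and openness follows since the conditions defining $W_\delta(R)$ — the curve splits off exactly these $d_i$ lines on each side, the residual components are irreducible and meet transversally, no extra node appears on $E$ — are all open conditions. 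I expect the main obstacle to be the dimension count in the third paragraph: getting the $h^0$ of the residual linear systems on the scrolls right for all parities simultaneously, and correctly accounting for the gluing condition along $E$ so that the two contributions do not ``double count'' the distinguished-point degree of freedom; the rest is formal given the cited smoothness of $\overline V_{1,\delta}(R)$.
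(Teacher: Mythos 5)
Your overall strategy coincides with the paper's: fiber $W_\delta(R)$ over the one--dimensional family of distinguished points (equivalently, of line chains), identify each fiber with a residual linear system, and then get smoothness from the fact that the marked nodes are non--disconnecting. The one place where you stop short is exactly the step you flag as the ``main obstacle'': the dimension of the fiber over a fixed chain $\mathfrak C_\eta$. You propose to compute $|L_{R_1'}-\mathfrak C_1|$ and $|L_{R_2'}-\mathfrak C_2|$ separately on the two scrolls and then impose agreement along $E'$, which is workable but is precisely where the parity bookkeeping and the danger of double counting live. The paper sidesteps all of this with a single observation: since $E'\subset\PP^p$ is an elliptic normal curve of degree $p+1$, restriction of hyperplanes to $E'$ gives $|L\otimes \I_{\mathfrak C_\eta/R}| \cong |L'\otimes\I_{\mathfrak C_\eta/R'}| \cong |L_{E'}\otimes\O_{E'}(-D_\eta)|$, where $D_\eta=D_{\delta,E}(P)$ has degree $\delta+1$; hence the fiber has dimension $p-\delta-1\geqslant 0$ with no case distinction on the parity of $p$ or $\delta$, and $\dim W_\delta(R)=(p-\delta-1)+1=g$. (In fact your gluing condition, pushed through, collapses to the same statement: the restriction maps $H^0(L_{R_i'})\to H^0(L_{E'})$ are isomorphisms because $L_{R_i'}-E'$ has no sections, so each half of the curve is determined by its trace on $E'$.) So I would not call this a wrong approach, but as written the central count is a plan rather than a proof; replacing your third paragraph by the identification with $|L_{E'}(-D_\eta)|$ both completes it and removes the parity analysis you were worried about. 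Your concluding paragraph (irreducibility from the fibration, openness, and smoothness via the non--disconnecting marked nodes) matches the paper's.
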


\begin{proof} If $\delta$ is even we denote by
$G_{\frac{\delta}{2}} \subset E \x E$  the graph of the translation by $\frac{\delta}{2}(L_1-L_2)\in {\rm Pic}^ 0(E)$. Then we have surjective morphisms
\[
g_{p,\delta}: W_{\delta}(R) \longrightarrow
\begin{cases}
P+Q\in |\frac{\delta+1}{2}L_2-\frac{\delta-1}{2}L_1|  & \mbox{if $\delta$ is odd}, \\
(P,Q)\in G_{\frac{\delta}{2}} & \mbox{if $\delta$ is even.} 
\end{cases}
\]
Take any point $\eta=P+Q$ or $\eta=(P,Q)$ in the target of $g_{p,\delta}$. Set $D_{\eta}:=D_{\delta,E}(P)$, which is an effective divisor of degree $\delta+1$ on $E$. Then $D_{\eta}$ defines a unique line chain $\mathfrak C_{\eta}$ on $R$ and  
the fiber $g_{p,\delta}^{-1}(\eta)$ is a dense, open subset of the projective space $|L\otimes \I_{\mathfrak C_{\eta}/R}| \cong |L'\otimes \I_{\mathfrak C_{\eta}/R'}|\cong |L_{E'}\otimes \O_{E'}(-D_{\eta})|$.
Since
\begin{equation*}
  \label{eq:dimfiber}
f:=\dim (|L_{E'}\otimes \O_{E'}(-D_{\eta})|)=p- \delta-1 \geqslant 0,
\end{equation*}
then $ W_{\delta}(R)$ is
irreducible of dimension $f+1=p-\delta=g$. The smoothness of $\overline V_{1,\delta}(R)$ at the points of $W_{\delta}(R)$ follows, as usual, because  the $\delta$
marked nodes are non--disconnecting (cf. \cite[Rmk.\,1.1]{ck}).  
\end{proof}

The same argument proves the following:

\begin{proposition}\label{prop:rightdim2} Let $d,\, \ell$ be non--negative integers such that $0<\ell \leqslant 16$. Set $\delta=d+\ell$ and assume $\delta \leqslant p$.  Then $W_{d,\ell}(R)$ is a union of smooth, open dense subsets of components of dimension $g=p-\delta$ of $\overline V_{1,\delta}(R)$. 
\end{proposition}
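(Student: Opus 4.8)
The statement asserts that $W_{d,\ell}(R)$ is a union of smooth, dense open subsets of $(p-\delta)$--dimensional components of $\overline{V}_{1,\delta}(R)$, under the hypothesis $\delta = d+\ell \leqslant p$ with $0<\ell\leqslant 16$. As the authors announce, the plan is to run essentially the same argument as in the proof of Proposition \ref{prop:rightdim}, accounting for the $\ell$ marked points in $T$ and the corresponding exceptional curves $\mathfrak e_i$ on $R_2$. The core is a fibration-by-linear-systems description of $W_{d,\ell}(R)$, together with the smoothness statement for Severi varieties at non--disconnecting marked nodes, which is quoted from \cite[Rmk.\,1.1]{ck}.

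\textbf{Step 1: reduce to curves on $R'$ and fix the combinatorial data.} First I would observe, as already noted after Definition \ref{def:compopartenza}, that a curve $C \in W_{d,\ell}(R)$ is the total transform of a curve $C' \in |mL_{R'}| = |L_{R'}|$ on $R' = \pi_p(R)$ having $d$ marked nodes on $R' \setminus E'$ and passing through a chosen subset of $\ell$ of the $16$ points of $T$. The choice of which $\ell$ of the $16$ points to pass through is discrete, which is precisely why the statement produces a \emph{union} of components rather than a single one; fix such a subset $T_\ell \subseteq T$ once and for all and work with the corresponding locally closed piece $W_{d,\ell}^{T_\ell}(R)$. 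On this piece the line chain $\mathfrak C$ splits off $d_1 = \lfloor d/2\rfloor$ lines on $R_1$ and $d_2 = \lceil d/2\rceil$ lines on $R_2$ forming a connected chain, and the residual curve $\gamma_1 \cup \gamma_2$ meets $E$ along the divisor $D_{d,E}(P) + T_\ell$ on each side.

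\textbf{Step 2: the fibration over $E$-data and a dimension count.} Exactly as in Proposition \ref{prop:rightdim}, I would set up a surjective morphism $g_{p,d}$ from $W_{d,\ell}^{T_\ell}(R)$ to $|\tfrac{d+1}{2}L_2 - \tfrac{d-1}{2}L_1|$ (if $d$ is odd) or to the translation graph $G_{d/2} \subset E\times E$ (if $d$ is even), recording the distinguished points $P,Q$. For $\eta$ in the target, the divisor $D_\eta := D_{d,E}(P)$ has degree $d+1$ and determines a unique line chain $\mathfrak C_\eta$ on $R$; the fiber $g_{p,d}^{-1}(\eta)$ is then a dense open subset of $|L \otimes \I_{(\mathfrak C_\eta \cup \mathfrak e_{T_\ell})/R}|$, which via the isomorphisms to $R'$ and restriction to $E'$ is identified with $|L_{E'} \otimes \O_{E'}(-D_\eta - T_\ell)|$ — the extra $\ell$ base conditions coming from forcing $C'$ through the $\ell$ points of $T_\ell$ on $E'$. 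Here I would invoke that for $L_1,L_2,L_{E'}$ general the $16$ points of $T$ are in general position on $E'$, so these conditions are independent; hence the fiber has dimension $\deg L_{E'} - 1 - (d+1) - \ell = (p+1) - 1 - (d+1) - \ell = p - \delta - 1 \geqslant -1$, which is $\geqslant 0$ precisely when $\delta \leqslant p$, matching the hypothesis. Adding back the dimension of the base ($1$ in the odd case, $1$ in the even case) gives $\dim W_{d,\ell}^{T_\ell}(R) = p-\delta = g$.

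\textbf{Step 3: smoothness and that these are components.} Smoothness of $\overline V_{1,\delta}(R)$ at points of $W_{d,\ell}(R)$ follows from \cite[Rmk.\,1.1]{ck} once one checks the $\delta = d+\ell$ marked nodes are non--disconnecting: the $d$ nodes opening the line chain are non--disconnecting because the chain is connected and $\delta \leqslant p$ guarantees (as discussed after Definition \ref{def:compopartenza}) that the partial normalization at all $\delta$ marked nodes stays connected, while each of the $\ell$ nodes lies on an exceptional curve $\mathfrak e_i \cong \PP^1$ meeting the rest of the curve in two points, so normalizing it also preserves connectedness. Since $\overline V_{1,\delta}(R)$ is pure of dimension $g = p-\delta$ and $W_{d,\ell}^{T_\ell}(R)$ is an irreducible smooth locally closed subset of that same dimension, it is a dense open subset of an irreducible component; taking the union over all $\binom{16}{\ell}$ choices of $T_\ell$ gives the claim. \emph{The main obstacle} I anticipate is the independence/generality claim in Step 2 — that the $16$ points of $T$ impose independent conditions on $|L_{E'} \otimes \O_{E'}(-D_\eta)|$ and, more delicately, that $D_\eta$ and $T_\ell$ have disjoint supports for general $\eta$ (so the line chain genuinely is reduced of degree $d+1$ and the exceptional nodes are off $E$); this requires using the genericity of $L_1, L_2$ built into the definition of $\mathfrak S_p$ and perhaps a short argument that the relevant "bad" loci in the base are proper closed subsets, but it is entirely parallel to the bookkeeping already done in the proof of Proposition \ref{prop:rightdim}.
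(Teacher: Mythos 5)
Your proposal follows the paper's route exactly: the paper's entire proof of Proposition~\ref{prop:rightdim2} is the single sentence ``The same argument proves the following'', referring back to Proposition~\ref{prop:rightdim}, and your Steps 1--3 are a faithful expansion of that argument --- fibering over the distinguished-point data, identifying the fiber with $|L_{E'}\otimes\O_{E'}(-D_\eta-T_\ell)|$, invoking \cite[Rmk.\,1.1]{ck} for smoothness at non--disconnecting marked nodes, and indexing the ${16 \choose \ell}$ components by the choice of $T_\ell\subseteq T$.

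One step, as written, does not cover the whole range of the hypotheses. You assert that the fiber dimension $p-\delta-1$ is ``$\geqslant 0$ precisely when $\delta\leqslant p$''; in fact $p-\delta-1\geqslant 0$ if and only if $\delta\leqslant p-1$. In the extremal case $\delta=p$ --- which is exactly the new case that Proposition~\ref{prop:rightdim2} adds over Proposition~\ref{prop:rightdim}, and the one the paper needs for $\W_{p-1,1}$ in Definition~\ref{def:componente} --- the line bundle $L_{E'}\otimes\O_{E'}(-D_\eta-T_\ell)$ has degree $0$, so the general fiber of your map $g_{p,d}$ is empty and ``fiber dimension plus base dimension'' is not a valid count (the map is no longer surjective onto the one-dimensional base). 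The statement still comes out right, but the argument there has to be replaced: one must show that the locus of $\eta$ for which $\O_{E'}(D_\eta+T_\ell)\cong L_{E'}$ is finite and nonempty, which requires analyzing how the class of $D_\eta=D_{d,E}(P)$ moves in $\Pic^{d+1}(E')$ as $\eta$ varies (for $d$ even the base is $G_{d/2}\cong E$ and the class is a translate of $[P]$, hence moves in all of $\Pic^{d+1}(E')$; for $d$ odd the base is a $\PP^1$, so the class is necessarily constant, and the two parities require separate treatment). Apart from this boundary case your write-up is correct; in particular, the ``main obstacle'' you anticipate is milder than you fear when $p-\delta\geqslant 1$, since on an elliptic curve any effective divisor imposes independent conditions on a complete linear system as long as the residual degree remains positive --- only the disjointness of the supports of $D_\eta$ and $T_\ell$ needs the genericity of $L_1, L_2, L_{E'}$, as you say.
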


The number of components of $W_{d,\ell}(R)$ is ${16} \choose l$, depending on the choice of subdivisors of degree $l$ of $T$.  

We can finally consider the universal family $\W_{d,\ell}$, irreducible by monodromy,  parametrizing all pairs $(R, C)$ with $R\in \mathfrak{S}_{p}$ and $C\in W_{d,\ell}(R)$  with the map $\W_{d,\ell}\to \mathfrak{S}_{p}$, whose general fiber is smooth of dimension $g$, so that $\dim(\W_{d,\ell})=g+18$.   
Note that  $\overline \V_{1,\delta}$ is smooth along $\W_{d,\ell}$. 

We will  only be concerned with the cases $\ell=0,1,2$, and mostly with $\ell=0$.

\begin{definition} \label{def:componente}
 For $\delta \leqslant p$, we denote by $\V^*_{\delta}$ the unique irreducible component of $\overline \V_{1,\delta}$ containing $\W_{\delta}$ if $\delta<p$ and $\W_{p-1,1}$ if $\delta=p$. We denote by 
  $V^ *_{\delta}(S)$ the fiber of  $\V^*_{\delta}$ over $S \in \K_p$. We denote by $\psi^*_{\delta}: \V^*_{\delta} \to \overline{\M}_g$ and  $\phi^*_{\delta}: \V^*_{\delta} \to \overline{\K}_p$ the restrictions of $\overline{\psi}_{1,\delta}$ and $\overline{\phi}_{1,\delta}$, respectively. 
\end{definition}

\begin{remark}\label{rem:imp} It is useful to notice  that, for  $R \in \mathfrak S_p$  one has
\[
W_{\delta+1}(R)\subset \overline{W_\delta(R)}, \,\,\, \text {for} \,\,\, 0\leqslant \delta\leqslant p-1,
\]and 
\[
W_{\delta,1}(R)\subset \overline{W_{\delta,0}(R)}=\overline{W_\delta(R)}, \,\,\, \text {for} \,\,\, 0\leqslant \delta\leqslant p-1.
\]
Hence we also have 
\[
\V^*_{\delta+1}\subset \V^*_{\delta}, \,\,\, \text {for} \,\,\, 0\leqslant \delta\leqslant p-1,
\]
i.e., $\V^ *_\delta$ is fully complete (see Remark \ref {rem:filtr}).  It follows by monodromy that for general $S \in \K_p$, every component of $V^ *_\delta(S)$ is fully complete. 

\end{remark}

\section{Special fibers of  the moduli map } \label{sec:modulimap} 

This section is devoted to the study of the dimension of the  fibers of the moduli map $\psi^*_{\delta}$ over curves arising from reducible $K3$ surfaces. 

For any $R' \in \R_p$, we define varieties of nodal curves analogously to Definition \ref{def:compopartenza}: we denote by $W_{\delta}(R')$ the set of reduced curves in $|L_{R'}|=|\O_{R'}(1)|$ having  exactly $\lfloor \frac{\delta}{2} \rfloor$ nodes on $R'_1-E'$ and $\lceil \frac{\delta}{2} \rceil$ nodes on 
$R'_2-E'$, so that they split off a total of $\delta$ lines, and such that the union of these $\delta$ lines is connected. Clearly, if $R'=\pi_p(R)$ for some $R=(R',T) \in \mathfrak{S}_p$, then there is a dominant, injective map
\[ \xymatrix{
W_{\delta}(R) \ar[r] & W_{\delta}(R')
}
\]
mapping a curve in $W_{\delta}(R)$ to its isomorphic image on $R'$. The image of this map is precisely the set of curves in $W_{\delta}(R')$ not passing through any of the points in $T$. 

The concepts of {\it line chain}, {\it distinguished pair of points} and curves $\gamma_i$ associated to an element of $W_{\delta}(R')$ are defined in the same way as for the curves in $W_{\delta}(R)$.

As above, we can consider the irreducible variety $\W'_{\delta}$ parametrizing all pairs $(R',C')$ with $R' \in \R_p$ and $C' \in W_{\delta}(R')$ and there is an obvious (now surjective!) map $\W_{\delta} \to \W'_{\delta}$. 

More generally, we may define a universal Severi variety of nodal curves with $\delta$ marked nodes on the smooth locus over $\R_p$, and we denote by $\V^*_{\delta,\R_p}$ the component containing $\W'_{\delta}$. There is a  modular map $\psi'_{\delta}: \V^*_{\delta,\R_p} \to \overline{\M}_g$ as before. Moreover, defining $\V^*_{\delta,\mathfrak{S}_p}$ to be the open set of curves in $(\phi^*_{\delta})^{-1}(\mathfrak{S}_p) \cap \V^*_{\delta}$ not containing any exceptional curves  in $\mathfrak e$ (note that $\W_{\delta} \subset \V^*_{\delta,\mathfrak{S}_p}$),  we have the surjective map
\[ \xymatrix{\pi_{\delta}: \V^*_{\delta,\mathfrak{S}_p} \ar[r] &   \V^*_{\delta,\R_p}} \]
 sending  $(C,R) \in \V^*_{\delta,\mathfrak{S}_p}$, where $C \subset R$ and $R=(R',T) \in \mathfrak{S}_p$, to $(C',R')$, where $C'$ is the  image of $C$ in $R'$ under the natural contraction map $R \to R'$. 

Summarizing,
we have the commutative diagrams

\begin{equation} \label{eq:diagrammonefichissimo}
\xymatrix{
&& & \overline{\M}_g \\
& \V^*_{\delta,\R_p} \ar[d] \ar[urr]^{\psi'_{\delta}} & \V^*_{\delta,\mathfrak{S}_p}   \ar@{->>}[l]^{\pi_{\delta}} \ar@{^{(}->}[r] 
\ar[d] \ar[ur] & \V_{\delta}^* \ar[d]^{\phi^*_{\delta}} 
\ar[u]_{\psi^*_{\delta}}
\\
&\R_p  & \mathfrak{S}_{p}  \ar@{->>}[l]^{\pi_p} \ar@{^{(}->}[r] & \overline{\K}_p               
}
\end{equation}
and
\begin{equation} \label{eq:diagrammonefichissimo2}
\xymatrix{&\W'_{\delta} \ar@{^{(}->}[d] & \W_{\delta} \ar@{^{(}->}[d]    \ar@{->>}[l]_{{\pi_\delta}_{|\W_\delta}} \\
& \V^*_{\delta,\R_p}  & \V^*_{\delta,\mathfrak{S}_p}   \ar@{->>}[l]^{\pi_{\delta}}            
}
\end{equation}

The fibers of $\pi_{\delta}$ are all $15$-dimensional (isomorphic to a dense open subset of $|T^1_{R'}|$). This shows that the fibers of $\psi^*_{\delta}$ over curves that come from reducible $K3$ surfaces have at least one component 
that has dimension at least $15$. As $15 >22-2g$ (the expected dimension of the general fiber) for $g \geqslant 4$, this makes it difficult to prove Theorem \ref{thm:main} directly by semicontinuity around elements of $\V^*_{\delta,\mathfrak{S}_p}$.  We will circumvent this problem in the next section, to prove Theorem \ref{thm:main} for $m=1$.

To this end we start by studying the fibers of the restriction of $\psi'_{\delta}$  to $\W'_{\delta}$.

\begin{prop} \label{prop:unica1'} Let $p\geqslant 3$ and   $g \geqslant 1 $.  Then  the general fiber of  $(\psi'_{\delta})_{|\W'_{\delta}}$  has dimension at most $\max\{0,7-g\}$.
\end{prop}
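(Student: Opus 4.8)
The plan is to compute the fiber of $(\psi'_{\delta})_{|\W'_{\delta}}$ over a general curve $C' = \mathfrak{C} \cup \gamma_1 \cup \gamma_2 \in W_\delta(R')$ directly, by understanding which triples $(R'', C'')$ in $\W'_\delta$ have the same stable model as $C'$. First I would recall from \S\ref{sec:limnod1} that the stable model of $C'$ (when $g$ is large enough) is the two--component nodal curve obtained by contracting the line chain $\mathfrak C$ and the exceptional stuff: one rational component coming from $\gamma_1$ and one from $\gamma_2$, glued at the nodes that are the images of $\mathfrak{C}\cap(\gamma_1\cup\gamma_2)$, together with the self--intersection node(s) forced by the marked nodes — i.e. a genus--$g$ stable curve of compact type (or with one non--separating node), whose normalization consists of two rational curves $\widetilde\gamma_1, \widetilde\gamma_2$ with a prescribed configuration of marked points (the preimages of $E'$, which form the divisor $D_{\delta,E}(P)$, plus the nodes). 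Here $g = p - \delta$.

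Next I would argue that recovering $(R',C')$ from the stable model amounts to: (i) recovering the elliptic curve $E'$, and (ii) recovering the line bundles $L_1,L_2$ and the polarization, together with the point $P$ (equivalently the distinguished pair). For step (i), the two rational curves $\widetilde\gamma_1,\widetilde\gamma_2$ each carry a distinguished divisor: the $\lfloor \delta/2\rfloor+1$ (resp. $\lceil\delta/2\rceil+1$) points where $\gamma_i$ meets $E'$ together with its points on the scroll's ruling structure; since $\gamma_i$ is a rational curve in $|L_{R'_i}| = |\O_{R'_i}(\sigma_i + lF_i)|$, the section of $\O(1)$ on the $\PP^1$--bundle $R'_i$ is essentially determined by $\gamma_i$ and its intersection with $E'$, so one can reconstruct $R'_i$ and the embedding, hence $E' \subset R'_i$ and the secant--line scroll structure, from the combinatorial data on $\widetilde\gamma_i$. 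The key numerical point is that the moduli of such data — essentially the cross--ratios of the $\delta+1$ marked points of $D_{\delta,E}(P)$ on $\widetilde\gamma_1$ (or the union), modulo automorphisms of $\PP^1$ — is $(\delta+1) - 3 = \delta - 2$ dimensional when $\delta \geqslant 2$, but these are constrained to lie on a fixed elliptic curve, dropping the count; the honest computation is that $\W'_\delta$ has dimension $g + 3$ (the "$3$" being $\dim\R_p$, i.e. the modulus of $E'$ plus $L_1,L_2,L_{E'}$ up to $\Aut$), and the image in $\overline\M_g$ has dimension $\min\{2g-3,\,g+3-\dim(\text{general fiber})\}$; comparing with the target dimension $2g-3$ of the locus of such stable curves gives fiber dimension $\max\{0, (g+3) - (2g-3)\} = \max\{0, 6-g\}$ — so I expect the actual bookkeeping to yield the slightly weaker bound $\max\{0,7-g\}$ because one of the $\gamma_i$ can vary in a pencil (the fiber of $g_{p,\delta}$ in Proposition \ref{prop:rightdim} has dimension $p-\delta-1 = g-1$, contributing an extra parameter that the stable model does not see when $g$ is small).

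Concretely, the steps in order: (1) fix general $(R',C') \in \W'_\delta$ and write down its stable model $X$ explicitly, identifying the special points on the two rational components; (2) show $E'$ is recovered from $X$ by a cross--ratio/$j$--invariant argument on the marked points, when $g = p-\delta$ is large enough; (3) show that, given $E'$, the remaining freedom in a curve $(R'',C'') \in (\psi'_\delta)_{|\W'_\delta}^{-1}(X)$ is controlled by the choice of $L_1,L_2 \in \Pic^2(E')$ and the distinguished pair, which contribute at most a bounded number of parameters, and that for $g \geqslant 8$ these are all rigidified, giving a finite fiber; (4) for small $g$ (i.e. $1 \leqslant g \leqslant 7$) track the at most $7-g$ residual moduli directly, coming from the pencil on the larger scroll component and the undetermined cross--ratios when there are too few marked points to pin down $\PP^1$ or $E'$. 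The main obstacle I anticipate is step (2)–(3): making precise exactly how many of the marked points on the rational components are "free" versus "forced to lie on $E'$," since the divisor $D_{\delta,E}(P)$ is highly non--generic (its points satisfy the relations \eqref{eq:relgen-odd}–\eqref{eq:relgen-even} defining $\phi_{k,E}$), so a naive parameter count over--estimates the fiber; one must use the rigidity coming from the anticanonical embedding $E' \equiv -K_{R'_i}$ and the fact that $L_{R'_i}$ is determined by $E'$ together with the ruling, to show the configuration on $\widetilde\gamma_i$ determines $E'$ with only finitely many choices once $\delta + 1 \geqslant$ some explicit bound linear in... no — once $g$ exceeds $7$. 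Handling the borderline small--$g$ cases cleanly, so that the "$\max$" is exactly $7-g$ and not off by one, will require the careful analysis already set up via the morphism $g_{p,\delta}$ and its fibers.
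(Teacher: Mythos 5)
Your overall strategy --- fix the stable model and ask which pairs $(R'',C'')$ produce it --- is the same as the paper's, but the proposal stops short of the one idea that makes the upper bound actually provable, and the dimension bookkeeping you lean on cannot yield an upper bound in the first place. Comparing $\dim(\W'_{\delta})=g+3$ with the dimension of the target stratum only bounds the general fiber \emph{from below}. (Moreover that stratum has dimension $2g-4$, not $2g-3$: two rational components glued at $g+1$ points, or one nodal rational component glued to a rational one at $g$ points, carry $2(g+1)-6=2g-4$ moduli; so the naive count gives fiber dimension $\geqslant 7-g$, which is precisely why the stated bound is sharp. Your explanation of the off-by-one via ``an extra parameter from the pencil'' is therefore not the right mechanism.) To get the \emph{upper} bound one must prove that the reconstruction data is rigid, which is exactly your steps (2)--(3); these are asserted rather than proved --- ``recover $E'$ by a cross-ratio/$j$-invariant argument'' is not an argument, and it is precisely the point where the work lies.

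The paper supplies the missing mechanism as follows. The normalization of the stable model produces two copies of $\PP^1$, hence a quadric $\mathcal Q=\Gamma_1\times\Gamma_2$ canonically up to the action of $(\Aut(\PP^1))^2$, and the nodes of the stable model give $g$ (or $g+1$) marked points $Q_j\in\mathcal Q$. The curve $E$ must then be a smooth member of $|-K_{\mathcal Q}\otimes\I_{Q_1\cup\cdots\cup Q_g}|$; this is packaged in the incidence variety $I_{\delta}$ and the surjection $\vartheta_{\delta}:I_{\delta}\to\W'_{\delta}$ with fibers $(\Aut(\PP^1))^2$ of Step 1 of the paper's proof. Since $\dim(|-K_{\mathcal Q}|)=8$, one immediately gets fiber dimension at most $\max\{0,8-g\}$ (with a small extra check that the $Q_j$ may be taken general when $g=8$), and the remaining unit is gained by showing that the condition imposed by the distinguished pair, namely \eqref{eq:condodd} or \eqref{eq:condeven}, cuts out a proper closed subscheme of $|-K_{\mathcal Q}\otimes\I_{Q_1\cup\cdots\cup Q_g}|$; the paper verifies this for $g\leqslant 7$ by exhibiting explicitly one anticanonical curve through the $Q_j$ satisfying the condition and one not. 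Without identifying the fiber inside this linear system on $\mathcal Q$, your steps (2)--(4) give no handle on why only finitely many $E'$ occur for $g\geqslant 8$, nor on why exactly one further condition is gained for $g\leqslant 7$; this is a genuine gap, not a matter of bookkeeping.
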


\begin{proof} We will assume $\delta\geqslant 1$. The case $\delta=0$ is easier, can be treated in the same way and details can be left to the reader.

  The idea of the proof is to define, in Step 1 below, an incidence variety $I_\delta$, described in terms of the geometry of the quadric $\mathcal Q=\PP^1 \x \PP^1$, and a surjective morphism $\theta_\delta: I_\delta \to \mathcal W'_\delta$, with fibers isomorphic to $({\rm Aut}(\mathbb P^ 1))^ 2$.  Rather than studying the moduli map $(\psi'_\delta)_{|\mathcal W'_\delta}$, we will study its composition $\sigma_\delta$ with $\theta_\delta$, which can be described in terms of the geometry of $\mathcal Q$. In Step 2 we will study the fibers of $\sigma_\delta$.

With $\mathcal Q=\PP^1 \x \PP^1$, let $\pi_i:\mathcal  Q \to \PP^1$ for $i=1,2$ be the projections onto the factors. Define
\[ 
{\mathcal Q}^2_{\delta}:= \begin{cases}
\Sym^2(\mathcal Q)  & \mbox{if $\delta$ is odd}, \\
{\mathcal Q}^2 & \mbox{if $\delta$ is even} 
\end{cases}
\]
and denote its elements by 
\[
(P,Q)_{\delta}:= \begin{cases}
P+Q \in \Sym^2(\mathcal Q)  & \mbox{if $\delta$ is odd}, \\
(P,Q) \in {\mathcal Q}^2 & \mbox{if $\delta$ is even.} 
\end{cases}
\]\medskip

\noindent {\bf Step 1: A basic construction}.  Consider the locally closed incidence scheme 
\[
I_{\delta} \subset |-K_{\mathcal Q}| \x \Sym^{g}(\mathcal Q) \x 
{\mathcal Q}^2_{\delta} \]
formed by all triples $(E, Q_1+ \cdots+ Q_{g}, (P,Q)_{\delta})$, with  $E\in |-K_{\mathcal Q}|$, $Q_1,\ldots,Q_{g}\in E \subset \mathcal Q$, $P,Q \in E$, 
such that:\\
\begin{inparaenum} [$\bullet$]
\item  $E$ is smooth;\\
\item  $Q= \phi_{\delta,E}(P)$ (equivalently, $P= \phi_{\delta,E}(Q)$  for odd $\delta$);\\ 
\item $\pi_i(Q_1+\cdots+ Q_{g})$ is reduced (in particular $Q_1,\ldots, Q_g$ are distinct);\\
\item  $\pi_i(D_{\delta,E}(P))$ and $\pi_i(Q_1+\cdots+ Q_{g})$ have no points in common, for $i=1,2$.\\
\end{inparaenum}
The divisor $D_{\delta,E}(P)$  is the one in Definition \ref{def:nonpiaceraaconcy} with $L_i$ defined by the projection ${\pi_i}_{|E}$, for $i=1,2$. Recall that $D_{\delta,E}(P)=D_{\delta,E}(Q)$ when $\delta$ is odd. 

The projection of $I_{\delta}$ to 
$|-K_{\mathcal Q}|$ is dominant, with general fiber of dimension $g+1$, hence
\[\dim( I_{\delta})=g+9.\]

Next we define a surjective map
\begin{equation*} \label{eq:tetta}
 \vartheta_{\delta}: I_{\delta} \rightarrow \W'_{\delta}
\end{equation*}
as follows: let
$\xi=(E, Q_1+ \cdots+ Q_{g}, (P,Q)_{\delta}) \in I_{\delta}$ and set $D_\xi=D_{\delta,E}(P)+Q_1+ \cdots+ Q_{g}$, which is a degree $p+1$ divisor on $E$. Then $\O_E(D_\xi)$  defines, up to projective transformations, an embedding of $E$   as an elliptic normal curve $E'\subset \PP^p$. The degree 2 line bundles $L_i$ defined by the projections ${\pi_i}_{|E}$ define rational normal scrolls $R'_i \subset \PP^p$,  for $i=1,2$, hence the surface $R'=R'_1\cup R'_2$. The hyperplane section of $R'$ cutting out $D_\xi$ on $E$ defines a curve $C_\xi' \in W_{\delta}(R')$. One defines $\vartheta_\delta(\xi)=C'_\xi$. 

The map $\vartheta_\delta$ is surjective. 
Take a curve $C'$ on $R'=  R'_1 \cup R'_2$ corresponding to a point of $\W'_{\delta}$ and consider $E=E'= R'_1 \cap R'_2$ together with the $2:1$ projection maps $f_i: E \to \gamma_i$, where $\gamma_i$ is  the residual curve to the line chain of $C'$ on $R'_i$, cf. 
Definition \ref{def:compopartenza}. Up to choosing an isomorphism $\gamma_i \cong \PP^1$, the map $f_1 \x f_2$ embeds $E$ as an anticanonical curve in $\mathcal Q$. Let $Q_1, \ldots, Q_g$ be the unordered images of the intersection points $\gamma_1 \cap \gamma_2 \subset E$ and $(P,Q)_{\delta}$ be the pair of distinguished points, ordered if $\delta$ is even, and unordered if $\delta$ is odd. This defines a point $\xi=(E, Q_1+ \cdots+ Q_{g}, (P,Q)_{\delta}) \in I_{\delta}$ such that $\vartheta_\delta(\xi)=C'$. This argument shows that the fibers of $\vartheta_\delta$ are isomorphic to $({\rm Aut}(\PP^ 1))^ 2$. 

We have the commutative diagram
\[
\xymatrix{
I_{\delta} \ar[drr]_{\sigma_{\delta}} \ar[rr]^{\vartheta_{\delta}} && \W'_{\delta} \ar[d]^{(\psi'_{\delta})_{|\W'_{\delta}}} \\
 &&  \overline{\M}_g
 }
\]
where $\sigma_{\delta}$ is the composition map. It can be directly defined as follows. Take $\xi=(E, Q_1+ \cdots+ Q_{g}, (P,Q)_{\delta}) \in I_{\delta}$. Call $\Gamma_1$ and $\Gamma_2$ the two $\PP^ 1$s such that ${\mathcal Q} =\Gamma_1\times \Gamma_2$. We have the $g$ points $P_{i,j}=\pi_i(Q_j)\in \Gamma_i$, for $i=1,2$ and $1\leqslant j\leqslant g$.  If $\delta$ is even, we get additional points $P_{1,g+1}=\pi_1(P)\in \Gamma_1$ and $P_{2,g+1}=\pi_2(Q)\in \Gamma_2$. If $\delta$ is odd we get two more points  $P_{1,g+1}=\pi_1(P), P_{1,g+2}=\pi_1(Q)$ on $\Gamma_1$. Then $\Gamma=\sigma_{\delta}(\xi)$ is obtained (recalling Figure \ref{fig:dis}) in one of the following ways:\\
\begin{inparaenum} [$\bullet$]
\item  When $\delta$ is even,  by gluing $\Gamma_1$ and $\Gamma_2$ with the identification of  $P_{1,j}$ with $P_{2,j}$, for $1\leqslant j\leqslant g+1$; in the case $g=1$, we furthermore contract $\Gamma_2$, thus identifying $P_{1,1}$ and $P_{1,2}$ on $\Gamma_1$.\\
\item   When $\delta$ is odd,  by first identifying $P_{1,g+1}$ and $P_{1,g+2}$ on $\Gamma_1$ and then gluing with $\Gamma_2$ by identifying the points  $P_{1,j}$ with $P_{2,j}$, for $1\leqslant j\leqslant g$;
 in the cases $g \leqslant 2$, we furthermore contract $\Gamma_2$ (when $g=2$, this  identifies $P_{1,1}$ and $P_{1,2}$ on $\Gamma_1$, whereas if $g=1$, the curve $\Gamma_2$ is contracted to a smooth point and no further node is created). 
\end{inparaenum}\\

\noindent {\bf Step 2: The fiber.}  By Step 1, to understand the general fiber of  $(\psi'_{\delta})_{|\W'_{\delta}}$ , it suffices to understand the general fiber of $\sigma_{\delta}$. This is what we do next. 

Consider a general element $\Gamma$ in  $\psi'_{\delta}(\W'_{\delta})$.  
Then $\Gamma= X_1 \cup X_2$ is a union of two rational components  when $g \geqslant 3$ or $g=2$ and $\delta$ is even (recall Figure \ref{fig:dis}). If $g=2$ and $\delta$ is odd, then $\Gamma$ is a rational curve with two nodes and we will substitute $\Gamma$ with the stably equivalent curve $X_1 \cup X_2$ where $X_1$ is the normalization of $\Gamma$ at one of the nodes and $X_2\cong \PP^1$ is attached to it at the two inverse images of the node. (The dimensional count that follows does not depend on the choice of the
node on $\Gamma_1$ and the two matching points on $X_2$, as we will show
below). If $g=1$, then $\Gamma$ is a rational curve with one node. When $\delta$ is even, we will substitute $\Gamma$ with the stably equivalent curve $X_1 \cup X_2$ where 
$X_1 \cong \PP^1$ is the normalization of $\Gamma$ and $X_2\cong \PP^1$ is attached to it at the two inverse images of the node. When $\delta$ is odd, we will substitute $\Gamma$ with the stably equivalent curve $X_1 \cup X_2$
where $X_1 \cong \Gamma$ and $X_2 \cong \PP^1$ is attached to an arbitrary smooth point of $X_1$. (The choice of the matching points on $X_1$ and $X_2$ will again not
influence the dimensional count that follows.) Thus, after the substitution, the curve $\Gamma$ looks exactly as one of the curves on the right in Figure \ref{fig:dis}.

Denote by $\Gamma_1$ and $\Gamma_2$ the  normalizations of $X_1$ and $X_2$, respectively,  both isomorphic to $\PP^ 1$, so that $\Gamma_1\times \Gamma_2\cong \mathcal Q$, up to the action of $({\rm Aut}(\mathbb P^ 1))^ 2$ (which will contribute to the fiber of $\sigma_\delta$).  
All curves $X_1\cup X_2$
stably equivalent to
 $\Gamma$ constructed  above for $g=1,2$ and $\delta$ odd and
$g=1$ and
$\delta$ even are equivalent by this action.

If $\delta$ is odd, then $X_2=\Gamma_2 \cong \PP^1$, whereas $X_1$ has one node. On $\Gamma_1\times \Gamma_2\cong \mathcal Q$ we have the $g$ points $Q_j=(x_j,y_j)$, for $1\leqslant j\leqslant g$, which are identified in $\Gamma$. In addition, we have the divisor $z_1+z_2 \in \Sym^2(\Gamma_1) \cong  \Sym^2(\PP^1)$ over the node of $X_1$. Then the $\sigma_{\delta}$--fiber of $\Gamma$ consists  of all $(E, Q_1+ \cdots+ Q_{g}, (P,Q)_{\delta}) \in I_{\delta}$
such that $E$ is a smooth curve in $|-K_{\mathcal Q} \* \I_{Q_1 \cup \cdots \cup Q_g}|$ satisfying the additional condition
\begin{equation}
  \label{eq:condodd}
  Q=\phi_{\delta, E}(P) \; \mbox{and} \; \pi_1(P)+\pi_1(Q) = z_1+z_2.
\end{equation}

If $\delta$ is even, then
$X_i \cong \Gamma_1 \cong \PP^1$, for $i=1,2$,  and the intersection points between the two components yield $g+1$  points $Q_j=(x_j,y_j) \in \mathcal Q$, for $1\leqslant j \leqslant g+1$. Then the $\sigma_{\delta}$--fiber of $\Gamma$ is the union of $g+1$ components obtained in the following way.   Choose an index $j\in \{1,\ldots,g+1\}$, e.g., $j=g+1$. Then the corresponding component of the fiber consists of all $(E, Q_1+ \cdots+ Q_{g}, (P,Q)_{\delta}) \in I_{\delta}$
such that $E$ is a smooth curve in $|-K_{\mathcal Q} \* \I_{Q_1 \cup \cdots \cup Q_g}|$ satisfying the additional condition
\begin{equation}
  \label{eq:condeven}
  Q=\phi_{\delta, E}(P) \; \mbox{and} \; \pi_1(P)=x_{g+1}, \pi_2(Q)=y_{g+1}. 
\end{equation}

Either way, up to the action of $({\rm Aut}(\mathbb P^1))^ 2$, the $\sigma_\delta$--fiber of $\Gamma$ is contained in $|-K_{\mathcal Q} \* \I_{Q_1 \cup \cdots \cup Q_g}|$, and, by assumption, we know that there are smooth curves $E\in |-K_{\mathcal Q} \* \I_{Q_1 \cup \cdots \cup Q_g}|$. 
Hence $\dim(|-K_{\mathcal Q} \* \I_{Q_1 \cup \cdots \cup Q_g}|)=\max \{0, 8-g\}$, which proves the assertion if $g\geqslant 8$.   (This is clear if $g \geqslant 9$; if $g=8$, the projection $I_{\delta} \to \Sym^8(\mathcal Q)$ is dominant, whence we may assume that $Q_1, \ldots, Q_8$ are general points in $\mathcal Q$.)

To finish the proof, we have to prove that for $g \leqslant 7$ and for a general choice of $(E, Q_1+ \cdots+ Q_{g}, (P,Q)_{\delta}) \in I_{\delta}$,  condition \eqref{eq:condodd} or \eqref{eq:condeven} defines a proper, closed subscheme of $|-K_{\mathcal Q} \* \I_{Q_1 \cup \cdots \cup Q_g}|$.

Let $\delta$ be odd. Let us fix a general point $P\in \mathcal Q$ and let us take a general curve $E\in |-K_{\mathcal Q} \otimes \I_P|$. Consider the point $Q=\phi_{\delta,E}(P)$. Fix then another general $E'\in |-K_{\mathcal Q} \otimes \I_P|$, which neither contains $Q$ nor its conjugate on $E$ via $L_1$. The curves $E$ and $E'$ intersect at $7$ distinct points off $P$. Choose $Q_1,\ldots, Q_g$ among them. Then  $E, E' \in |-K_{\mathcal Q} \* \I_{Q_1 \cup \cdots \cup Q_g}|$, and $E$  satisfies \eqref{eq:condodd} (with $z_1=\pi_1(P), z_2=\pi_1(Q)$), whereas $E'$ does not. 

The proof for $\delta$ even is similar and can be left to the reader. \end{proof}


\begin{cor} \label{cor:unica1}   Let $p \geqslant 3$,  $g \geqslant 1$  and $(R_0,C_0) \in \W_{\delta}$. Set $(R'_0,C'_0):=\pi_{\delta}((R_0,C_0)) \in \W'_{\delta}$ and $\Gamma_0:=\psi_{\delta}^*((R_0,C_0))= \psi'_{\delta}((R'_0,C'_0))$, cf. \eqref{eq:diagrammonefichissimo} and \eqref {eq:diagrammonefichissimo2}.

There is at least one component $V_0$ of
$(\psi^*_{\delta})^{-1}(\Gamma_0)$ containing the whole fiber $\pi^{-1}_{\delta}((R'_0,C'_0))$, and any such $V_0$ satisfies  \\
\begin{inparaenum}
\item[(i)] $V_0 \subseteq \W_{\delta}$; \\
\item[(ii)] $\dim( \pi_{\delta}(V_0)) \leqslant \max\{0,7-g\}$.
\end{inparaenum}
\end{cor}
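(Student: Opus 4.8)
The plan is to deduce Corollary \ref{cor:unica1} from Proposition \ref{prop:unica1'} by carefully tracking how the $15$-dimensional fibers of $\pi_\delta$ interact with the fibers of the moduli map $\psi^*_\delta$. First I would recall the diagram \eqref{eq:diagrammonefichissimo}: since $\psi^*_\delta$ factors through $\pi_\delta$ on the locus $\V^*_{\delta,\mathfrak{S}_p}$ (up to the fact that $\overline\psi_{1,\delta}$ restricted there equals $\psi'_\delta \circ \pi_\delta$, because a curve $C \subset R$ and its image $C' \subset R'$ have isomorphic normalizations and hence the same stable model), the entire fiber $\pi_\delta^{-1}((R'_0,C'_0))$ is mapped by $\psi^*_\delta$ to the single point $\Gamma_0 = \psi'_\delta((R'_0,C'_0))$. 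Therefore $\pi_\delta^{-1}((R'_0,C'_0))$ is contained in $(\psi^*_\delta)^{-1}(\Gamma_0)$, and since this fiber of $\pi_\delta$ is irreducible (being a dense open subset of $|T^1_{R'_0}|$), it lies in a single irreducible component $V_0$ of $(\psi^*_\delta)^{-1}(\Gamma_0)$. This gives the existence assertion.

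Next I would establish (i). The point $(R_0,C_0)$ lies in $\W_\delta$, which by Proposition \ref{prop:rightdim} (and the paragraph following it, where $\W_\delta$ is identified with a component of $\V^*_{\delta,\mathfrak{S}_p}$) is an irreducible component of $\overline{\V}_{1,\delta}$ restricted over $\mathfrak{S}_p$ of dimension $g + 18$, and $\overline{\V}_{1,\delta}$ is smooth along $\W_\delta$. Since $V_0 \subseteq (\psi^*_\delta)^{-1}(\Gamma_0)$ contains the $15$-dimensional irreducible set $\pi_\delta^{-1}((R'_0,C'_0))$, which sits inside $\W_\delta$, and $V_0$ is irreducible, to conclude $V_0 \subseteq \W_\delta$ I would argue via a local dimension/smoothness estimate: near a general point of $\pi_\delta^{-1}((R'_0,C'_0))$, the scheme $(\psi^*_\delta)^{-1}(\Gamma_0)$ is contained in the fibers of $\overline\phi_{1,\delta}$ over $\mathfrak{S}_p$ union $\W_\delta$ itself — more precisely, any deformation of $C_0$ preserving the stable model of the normalization must keep the $K3$ degenerate (the smooth $K3$ surfaces have $\W_\delta$-type curves with moduli map of the right fiber dimension, while over $\mathfrak{S}_p$ we are constrained to $\W_\delta$ by the description of $\overline{V}_{1,\delta}(R)$). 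The cleanest route is: $V_0$ meets the smooth locus of $\overline{\V}_{1,\delta}$ along $\W_\delta$, so $V_0$ is contained in the unique component $\V^*_\delta$ through $\W_\delta$; and since $\psi^*_\delta$ has positive-dimensional fibers only over the degenerate locus here, $V_0$ stays inside $\phi_\delta^{*-1}(\mathfrak{S}_p)$, hence inside $\W_\delta$ (using that $V_0$ cannot contain exceptional curves $\mathfrak{e}$, as those curves lie in $\W_{d,\ell}$ with $\ell>0$ and map to different boundary strata of $\overline{\M}_g$ than $\Gamma_0$, contradicting that $V_0$ is a fiber of $\psi^*_\delta$ over $\Gamma_0$).

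For (ii): once $V_0 \subseteq \W_\delta = \W_\delta \subseteq \V^*_{\delta,\mathfrak{S}_p}$, apply $\pi_\delta$. Since $V_0 \subseteq (\psi^*_\delta)^{-1}(\Gamma_0) = (\psi'_\delta \circ \pi_\delta)^{-1}(\Gamma_0)$, we get $\pi_\delta(V_0) \subseteq (\psi'_\delta)^{-1}(\Gamma_0) \cap \W'_\delta = ((\psi'_\delta)_{|\W'_\delta})^{-1}(\Gamma_0)$. By Proposition \ref{prop:unica1'}, and since $(R'_0,C'_0)$ is a \emph{general} point of $\W'_\delta$ (as $(R_0,C_0)$ is general in $\W_\delta$ and $\pi_\delta|_{\W_\delta}$ is dominant onto $\W'_\delta$ by \eqref{eq:diagrammonefichissimo2}), the general — hence every — fiber of $(\psi'_\delta)_{|\W'_\delta}$ through $(R'_0,C'_0)$ has dimension $\leqslant \max\{0,7-g\}$, whence $\dim(\pi_\delta(V_0)) \leqslant \max\{0,7-g\}$.

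The main obstacle will be part (i): rigorously ruling out that $V_0$ escapes $\W_\delta$ — either into the interior $\K_p$ (deformations smoothing the $K3$) or into neighbouring boundary strata (curves acquiring components along $E$ or picking up exceptional curves $\mathfrak{e}$). The key technical inputs are the smoothness of $\overline{\V}_{1,\delta}$ along $\W_\delta$ (so $\W_\delta$ determines a unique ambient component $\V^*_\delta$), the explicit description of $\overline{V}_{1,\delta}(R)$ for $R \in \mathfrak{S}_p$ given in \S\ref{S:stableK3}, and the observation that $\psi^*_\delta$ restricted to the open locus $\V^*_{\delta,\mathfrak{S}_p}$ equals $\psi'_\delta \circ \pi_\delta$ with $\pi_\delta$ having equidimensional $15$-dimensional fibers. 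I would devote the bulk of the proof to this containment, using semicontinuity of fiber dimension of $\psi^*_\delta$ together with the fact — coming from the theory of the Mori–Mukai / Chiantini–Lopez–Miranda analysis invoked in the introduction, or simply from Proposition \ref{prop:unica1'} being sharp — that over a general smooth $S \in \K_p$ the map $\psi^*_\delta$ already has fibers of the expected dimension, so no component of a moduli fiber can have general point over $\K_p$ and specialize to contain $\pi_\delta^{-1}((R'_0,C'_0))$.
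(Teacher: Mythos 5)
Your existence argument (the fiber $\pi_\delta^{-1}((R'_0,C'_0))$ is irreducible, being a dense open subset of $|T^1_{R'_0}|$, and lies in $(\psi^*_\delta)^{-1}(\Gamma_0)$) and your reduction of (ii) to Proposition \ref{prop:unica1'} match the paper, and you correctly single out (i) as the crux. But your argument for (i) has a genuine gap: it is circular. You rule out deformations of $(R_0,C_0)$ into $\K_p$ by appealing to the claims that ``the smooth $K3$ surfaces have $\W_\delta$-type curves with moduli map of the right fiber dimension'' and that ``$\psi^*_\delta$ has positive-dimensional fibers only over the degenerate locus''. Neither is available at this stage: the whole purpose of Corollary \ref{cor:unica1} is to feed into Proposition \ref{prop:unica2}, which is precisely where the fiber dimension of $\psi^*_\delta$ over smooth surfaces is finally controlled. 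The paper's mechanism is different and non-circular: any such $V_0$ contains points $(R,C)$ with $R=(R',T)$ for a \emph{general} $T\in|T^1_{R'}|$, and such $R$ lies off the closure of the Noether--Lefschetz locus; hence an element $(S,C)\in V_0$ with $S$ smooth would have $\Pic(S)\cong\ZZ$, forcing $C\in|L|$ to be irreducible, which is incompatible with its moduli point being the nodal curve $\Gamma_0$. This Picard-rank step is entirely absent from your sketch.

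The second half of (i) is also not adequately handled. Once $V_0\subseteq(\phi^*_\delta)^{-1}(\mathfrak S_p)$, you still must show that a curve $C_t\in V_0$ on a nearby $R_t\in\mathfrak S_p$ actually lies in $W_\delta(R_t)$: a priori it could contain exceptional components of $\mathfrak e$, or its marked nodes could fail to sit on a single connected line chain. Your exclusion of exceptional components (``curves in $\W_{d,\ell}$ with $\ell>0$ map to different boundary strata of $\overline{\M}_g$'') is unjustified: the exceptional curves are contracted in the stable model, so such curves land in the \emph{same} stratum of $\overline{\M}_g$ (two rational components glued at $g+1$ points), and in any case $\overline V_{1,\delta}(R_t)$ contains many curves not of $W$-type at all. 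The paper instead runs a one-parameter (disc) argument: since $\pi_0(C_0)\cap T_0=\emptyset$, one has $\pi_t(C_t)\cap T_t=\emptyset$ for all $t$, so $C_t\cong\pi_t(C_t)$ contains no exceptional curves, its $\delta$ marked nodes lie on lines, and by continuity those lines form exactly one connected chain, whence $C_t\in W_\delta(R_t)$. Without these two arguments your proof of (i), and hence of the corollary, does not go through. (A minor further point: your phrase ``the general --- hence every --- fiber'' is backwards, since fiber dimension is only upper semicontinuous; the bound in (ii) is obtained by applying Proposition \ref{prop:unica1'} at the relevant, sufficiently general, point.)
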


\begin{proof}  
(a) As mentioned above, the fiber $\pi_{\delta}^{-1}((R'_0,C'_0))$ is isomorphic to a dense open subset of  $|T^1_{R'_0}|$, and it is  contained in $(\psi_{\delta}^*)^{-1}(\Gamma_0) \cap \W_{\delta}$. Hence there is at least one component $V_0$ of
$(\psi^*_{\delta})^{-1}(\Gamma_0)$ containing the whole fiber $\pi^{-1}_{\delta}((R'_0,C'_0))$ and 
  any such component contains some $(R,C)\in \mathcal W_\delta$ satisfying that $R=(R',T)$ for a {\it general} element $T \in  |T^ 1_{R'}|$. 
Due to the generality of $T$, the surface $R$ lies off the closure of the Noether--Lefschetz locus in $\K_p$ (see \cite {clm}). Thus, there can be no 
element $(S,C)\in V_0$ with $S$ smooth, because ${\rm Pic}(S)\cong \mathbb Z$, 
   whereas $C$ is reducible, its class in  moduli being $\Gamma_0$. So, if $(S,C)\in V_0$, then $S\in \mathfrak S_p$. Thus $V_0 \sub \V^*_{\delta} \cap (\phi^*_{\delta})^{-1}(\mathfrak{S}_p)$.

If $\dim (V_0)=0$, there is nothing more to prove. So assume there is a disc $\mathbb D\subset V_0$ parametrizing pairs $(R_t, C_t)$
such that $\psi^*_{\delta}(	R_t, C_t)=\Gamma_0$, with $R_t\in \mathfrak{S}_p$ and $C_t \in V_{\delta}(R_t)$, for all $t \in \Delta$, with $R_0=R$. Consider $R'_t=\pi_p(R_t)\in \R_p$, and the divisor $T_t$ on $E'_t$ along which the modification $\pi_t: R_t\to R'_t$ takes place. 
As $\pi_0(C_0) \cap T_0 =\emptyset$, we may assume that  $\pi_t(C_t) \cap T_t =\emptyset$ for all $t\in \Delta$. Hence 
$C_t \cong \pi_t(C_t) \subset R'_t$, which has $\delta$ marked nodes on the smooth locus of $R'_t$, for all $t\in \Delta$. These nodes lie on lines contained in $\pi_t(C_t)$. As 
$\pi_0(C_0)$ contains precisely one connected chain of lines (i.e., its line chain, see Definition \ref {def:compopartenza}), the same holds for $\pi_t(C_t)$ for all $t\in \Delta$. Hence $\pi_t(C_t) \in W_{\delta}(R'_t)$, so that
$C_t \in W_{\delta}(R_t)$. This proves (i).  Assertion (ii)  then follows from Proposition \ref{prop:unica1'}.
\end{proof}

\section{The moduli map for $m=1$} \label{sec:m1}

In this section we will prove the part of Theorem \ref {thm:main} concerning the case $m=1$.  We circumvent the problem of the ``superabundant'' fibers of $\psi^ *_{\delta}$ remarked in the previous section by passing to (a component of) the universal Severi variety over the Hilbert scheme $\H_p$, which is the pullback of the universal Severi variety $\V^*_{\delta}$ over the locus of smooth, irreducible $K3$ surfaces.

 \begin{prop} \label{prop:unica2} Let $p\geqslant 3$. One has:\\
\begin{inparaenum}
\item [(a)] The map $\psi^ *_{\delta}$ is generically finite for $g\geqslant 15$.\\
\item [(b)] The map $\psi^ *_{\delta}$ is dominant for  $1 \leqslant g \leqslant 7$.
\end{inparaenum}
\end{prop}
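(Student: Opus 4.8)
The plan is to kill the $15$--dimensional superabundant fibre components detected in \S\ref{sec:modulimap} by replacing $\overline{\K}_p$ with the Hilbert scheme $\H_p$, where they are absorbed into the $\PGL(p+1)$--action. Concretely, inside the universal Severi variety over $\H_p$ let $\widetilde{\V}^*_\delta$ be the closure of the preimage of $\V^*_\delta$ over the open locus of smooth, irreducible $K3$ surfaces of Picard number one; it is irreducible and, over that locus, a $\PGL(p+1)$--bundle over $\V^*_\delta$, so $\dim\widetilde{\V}^*_\delta=19+g+\dim\PGL(p+1)=19+g+p^2+2p$, and it carries a moduli map $\widetilde\psi\colon\widetilde{\V}^*_\delta\to\overline{\M}_g$ which, modulo $\PGL(p+1)$, is $\psi^*_\delta$. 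Since the action is generically free with $(p^2+2p)$--dimensional orbits, $\psi^*_\delta$ is dominant iff $\widetilde\psi$ is, and $\psi^*_\delta$ is generically finite onto its image iff the general fibre of $\widetilde\psi$ has dimension $p^2+2p$. As $\widetilde{\V}^*_\delta$ is irreducible, every component of a non--empty fibre of $\widetilde\psi$ has dimension at least $\dim\widetilde{\V}^*_\delta-\dim\overline{\widetilde\psi(\widetilde{\V}^*_\delta)}$, so, exactly as in the introduction, it will suffice to exhibit one point $\Gamma_0$ of the image together with \emph{one} component of $\widetilde\psi^{-1}(\Gamma_0)$ of dimension $\leqslant p^2+2p+\max\{0,22-2g\}$: such a bound forces $\dim\overline{\widetilde\psi(\widetilde{\V}^*_\delta)}=\min\{19+g,\,3g-3\}$, hence generic finiteness when $g\geqslant 11$ and dominance when $g\leqslant 11$.

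\emph{The test curve.} I would take a general reducible scroll surface $R'\in\R'_p$ (legitimate, since $\R'_p$ lies in the closure of the smooth $K3$ locus inside $\H_p$) and a general $C'_0\in W_\delta(R')$ --- possible because $0\leqslant\delta\leqslant p-1$, i.e.\ $g\geqslant 1$ --- and set $\Gamma_0:=\widetilde\psi(C'_0)$, which for $g\geqslant 2$ is a two--component rational curve (cf.\ Figure \ref{fig:dis}). Let $\widetilde V_0$ be the component of $\widetilde\psi^{-1}(\Gamma_0)$ through $C'_0$; the crux is to bound $\dim\widetilde V_0$. First, as in Corollary \ref{cor:unica1}(i), every pair in $\widetilde V_0$ has reducible associated curve (its moduli point being $\Gamma_0$), hence cannot have a Picard number one $K3$ as its surface; a Noether--Lefschetz argument --- for which it is convenient to pass to the blow--up of $\H_p$ along $\R'_p$, whose exceptional divisor over $R'$ is $\PP^{15}=|T^1_{R'}|$, and to work at a point corresponding to a \emph{general} $T$, so that the resulting stable $K3$ lies off the Noether--Lefschetz locus as in \cite{clm} --- confines $\widetilde V_0$ to curves supported on reducible surfaces, and a count of connected line chains pins down its shape. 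One then estimates $\dim\widetilde V_0$ by peeling off the $(p^2+2p)$--dimensional $\PGL(p+1)$--orbits and the $|T^1_{R'}|$--directions, and invoking Proposition \ref{prop:unica1'} to control what remains through the fibres of $(\psi'_\delta)_{|\W'_\delta}$; the net outcome is a bound of the form $\dim\widetilde V_0\leqslant p^2+2p+\max\{0,15-g\}$, the constant $15=\dim|T^1_{R'}|$ surviving because $T^1_{R'}$ is non--trivial.

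\emph{Conclusion.} For $g\geqslant 15$ this bound reads $\dim\widetilde V_0\leqslant p^2+2p=p^2+2p+\max\{0,22-2g\}$, which forces $\widetilde\psi$, hence $\psi^*_\delta$, to be generically finite onto its image, giving $(a)$; for $g\leqslant 7$ one has $\max\{0,15-g\}=15-g\leqslant 22-2g$, so the bound is again $\leqslant p^2+2p+\max\{0,22-2g\}$, forcing dominance and giving $(b)$. The case $\delta=0$ (i.e.\ $g=p$) is entirely analogous, using $W_0(R')$ in place of $W_\delta(R')$. I expect the main obstacle to be precisely the dimension estimate of $\widetilde V_0$: one must carry out a careful local analysis of $\widetilde{\V}^*_\delta$ along the reducible locus --- where, because $T^1_{R'}$ is non--trivial, the limiting Severi variety is genuinely larger than $W_\delta(R')$, as curves may acquire marked nodes along $E'$ --- and check that, after quotienting by $\PGL(p+1)$ and by the $|T^1_{R'}|$--directions, the residual freedom is exactly the one measured by Proposition \ref{prop:unica1'}; this is also where the interval $8\leqslant g\leqslant 14$ is genuinely lost.
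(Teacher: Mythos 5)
Your overall strategy coincides with the paper's: pass from $\overline{\K}_p$ to the Hilbert scheme $\H_p$, degenerate to a general $C'_0\in W_{\delta}(R')$ on a general $R'\in\R'_p$, and bound the component of the moduli-map fiber through $(R',C'_0)$ by splitting its tangent cone into the directions tangent to the locus of pairs supported on $\R'_p$ (controlled by Proposition \ref{prop:unica1'} via Corollary \ref{cor:unica1}) and the directions normal to it (controlled via the map $T_{\H_p,R'}\to H^0(T^1_{R'})$ of \eqref{standard}). The decisive quantitative input, however, is missing. You assert that the normal directions contribute at most $\max\{0,15-g\}$, ``the constant $15=\dim|T^1_{R'}|$ surviving because $T^1_{R'}$ is non-trivial'' --- but if all of $|T^1_{R'}|$ survived you would only get the useless bound $15$, and nothing in your sketch produces the subtraction of $g$. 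That subtraction is the heart of the paper's proof of \eqref{eq:int2}: one first shows, by normalizing the total space, that a deformation inside the fiber moving the surface off $\R'_p$ cannot stay singular along a curve, hence goes to surfaces with at worst $A_n$-singularities and factors through $\mathfrak{S}_p$; one then observes that the $g+1$ unmarked nodes of the limit curve mapping to the nodes of $\Gamma_0$ cannot lie on $E'$ (a node on the double curve smooths as the surface smooths), so they must sit on the exceptional curves of $R\to R'$, forcing the corresponding $g+1$ points of $E'$ into the divisor $T\in|T^1_{R'}|$ and cutting the image of the tangent cone in $H^0(T^1_{R'})$ down to dimension $\leqslant 15-g$ when $g\leqslant 14$; finally the boundary case $g=15$ must be excluded separately, which the paper does by deriving from \eqref{eq:relgen-odd}--\eqref{eq:relgen-even} and \eqref{eq:B} either a finite determination of the line chain (contradicting the generality of $C'_0$) or a non-trivial relation among $L_{E'},L_1,L_2$. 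None of these three steps appears in your proposal, and without them part (a) is not reached.

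There is also an arithmetic slip in your intermediate claim: the method yields $\dim\widetilde V_0\leqslant p^2+2p+\max\{0,7-g\}+\max\{0,15-g\}$, not $p^2+2p+\max\{0,15-g\}$; the term $\max\{0,7-g\}$ is the contribution of the part of the fiber lying over $\R'_p$, given by Proposition \ref{prop:unica1'}, and cannot be dropped when $g\leqslant 6$. This happens not to affect your conclusion, since $(7-g)+(15-g)=22-2g$ is exactly the bound needed for dominance when $g\leqslant 7$, but as stated your bound is stronger than what the argument can deliver.
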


\begin{proof}  There exists a dense, open subset $\H^{\circ}_p \subset \H_p$, disjoint from $\R'_p$, such that there is a dominant moduli morphism  
\[
\xymatrix{ \mu_p: \H^{\circ}_p \ar[r] &  \K_p \subset \overline{\K}_p.}
\]

We consider the cartesian diagram
\[
\xymatrix{
\mathfrak V^{\circ}_{\delta} \ar[d]_{\Phi_{\delta}^{\circ}} \ar[rr]^ {\nu_p} &&  \V^*_{\delta} \ar[d]^{\phi^*_{\delta}} \\
 \H^{\circ}_p\ar[rr]_ {\mu_p}  &&  \overline{\K}_p
 }
\]
which defines, in the first column, (a component of) a \emph{universal Severi variety} over $\H^{\circ}_p$. Accordingly, we can then consider the moduli map $\Psi^{\circ}_\delta$ fitting in the commutative diagram
\[
\xymatrix{
\mathfrak V^{\circ}_{\delta} \ar[drr]_{\Psi^{\circ}_{\delta}} \ar[rr]^ {\nu_p} &&  \V^*_{\delta} \ar[d]^{\psi^ *_{\delta}}\\
 &&  \overline{\M}_g
 }
\]
and it suffices to prove that the general fiber of $\Psi^{\circ}_{\delta}$ has dimension  
\[ 
p^2+2p+\max\{0,22-2g\}=p^2+2p+\max\{0,7-g\}+\max\{0,15-g\},
\]
the equality due to the assumptions on $g$.  We prove this by semicontinuity. To this end we introduce a partial compactification $\mathfrak V_{\delta}$ of $\mathfrak V^{\circ}_{\delta}$ fitting into the commutative diagram
\[
\xymatrix{
\mathfrak V^{\circ}_{\delta} \ar[d]_{\Phi^{\circ}_{\delta}} \ar@{^{(}->}[r] &  \mathfrak V_{\delta} \ar[d]^{\Phi_{\delta}} \\
\H^{\circ}_p \ar@{^{(}->}[r] & \H_p
 }
\]
The  fiber of $\Phi_\delta$ over any element $R' \in \R'_p \subset \H_p$ contains the varieties $W_{\delta}(R')$ by Proposition \ref{prop:rightdim} (and Definition \ref{def:componente}). Possibly after substituting $\mathfrak V_{\delta}$ with a dense, open subset (still containing the varieties $W_{\delta}(R')$), we
have a natural moduli map $\Psi_{\delta}: \mathfrak V_{\delta} \to \overline{\M}_g$ extending $\Psi^{\circ}_{\delta}$. 

Take general $R' \in \R'_p$ and $C'_0 \in W_{\delta}(R')$ and set 
$\Gamma:=\Psi_{\delta}((R',C'_0))=\psi'_{\delta}((R',C'_0))$, cf. \eqref{eq:diagrammonefichissimo} and \eqref{eq:diagrammonefichissimo2}. By semicontinuity,  it suffices to prove that any component $F_0$ of 
$\Psi_{\delta}^{-1}(\Gamma)$ passing through $(R',C'_0)$ has dimension at most $p^2+2p+\max\{0,7-g\}+\max\{0,15-g\}$. This will be done by first bounding the dimension of $F_0\cap \Phi_\delta^ {-1}( \R'_p)$ and then bounding the dimension of all possible deformations of $(R',C'_0)$ in $F_0$ outside of $\Phi_\delta^ {-1}( \R'_p)$. We make this more precise as follows.

Let $\mathfrak K$ be the reduced affine tangent  cone to $F_0$ at  $(R', C'_0)$.
Consider the linear map
 $$
\xymatrix{
L:T_{F_0,(R', C'_0)}\ar[r]& T_{\H_p, R'}\ar[r] &H^0(T^1_{R'}),
}
$$
defined by composing the differential $d_{(R',C'_0)}(\Phi_\delta|_{F_0})$ of $\Phi_\delta$ at $(R',C'_0)$ with the map \eqref{standard},
and its restriction $L_{|_{\mathfrak K}}$ to $\mathfrak K$. By standard deformation theory, $L_{|_{\mathfrak K}}^{-1}(0)$ coincides with the affine tangent
cone to $F_0\cap \Phi_\delta^ {-1}( \R'_p)$ at $(R', C'_0)$. Therefore, 
\[
\dim (F_0)=\dim (\mathfrak K) \leqslant \dim(F_0\cap \Phi_\delta^ {-1}(\mathcal R'_p))+\dim(L(\mathfrak K))
\]

\noindent and we are done once we prove that 
\begin{equation}\label{eq:int} \dim(F_0\cap \Phi_\delta^ {-1}( \R'_p))\leqslant p^2+2p+\max\{0,7-g\}
\end{equation}
and
\begin{equation}\label{eq:int2}
\dim ( L(\mathfrak K')) \leqslant \max\{0,15-g\} \; \; 
\mbox{for any irreducible component $\mathfrak K'$ of $\mathfrak K$.}
\end{equation}
\medskip

\noindent {\bf Proof of \eqref{eq:int}:}   Under the $\PGL(p+1,\CC)$-quotient map\[
\xymatrix{
\Phi_{\delta}^{-1} (\R'_p)\ar[r] \ar[d]^{\Phi_{\delta}} &  \V^*_{\delta,\R_p} \ar[d]\\
 \R'_p \ar[r] & \R_p
 }
\] 
the variety  $F_0\cap \Phi_\delta^ {-1}( \R'_p)$ is mapped with fibers of dimension $p^2+2p$ to a variety $V'_0 \sub (\psi'_{\delta})^{-1}(\Gamma) \sub \V^*_{{\delta},\R_p}$ intersecting $\W'_{\delta}$. The component $V_0$ of {$(\psi^*_{\delta})^{-1}(\Gamma)\cap \mathcal V^ *_{\delta,\mathfrak S_p}$} containing $(\pi_{\delta})^{-1}(V'_0)$ (cf. \eqref{eq:diagrammonefichissimo} and \eqref{eq:diagrammonefichissimo2}) lies in $\W_{\delta}$ and satisfies  $\dim (\pi_{\delta}(V_0))\leqslant \{0, 7-g\}$ by Corollary \ref{cor:unica1}. Therefore   $\dim (V'_0)\leqslant \{0, 7-g\}$ by the surjectivity of $\pi_{\delta}$,  proving \eqref {eq:int}.\medskip

\noindent {\bf Proof of \eqref{eq:int2}:}  
We may assume that  $F_0$ contains a one--parameter family containing $(R',C'_0)$  and such that its general point is a pair $(S,C)$ with $S \not \in \R'_p$, otherwise $F_0=F_0\cap \Phi_\delta^{-1} (\R'_p)$ and there is nothing more to prove.

  We claim that $S$ has at most isolated singularities. Indeed, suppose $S$ is singular along a curve $\Sigma$  and let $\mathcal S$ be the total space of the family. Then the flat limit of $\Sigma$ on the central fiber sits in the singular locus scheme of $R'$, which is $E'$. Since $E'$ is a general elliptic normal curve (hence it is smooth and irreducible), then also $\Sigma$ is an elliptic normal curve and $S$, as well as $R'$, has simple normal crossings along $\Sigma$.  Then  $\mathcal S$ has normal crossings along the surface  which is the total space of the family of singular curves of the surfaces $S$. Let $\mathcal S'$ be the normalization of $\mathcal S$. The central fiber of $\mathcal S'$ consists of two smooth connected components,  i.e., the proper transforms of $R'_1$ and $R'_2$. This implies that the general fiber  of $\mathcal S'$ is also disconnected, consisting of two smooth components isomorphic to $R'_1$ and $R'_2$. Therefore the general fiber  $S$ of $\mathcal S$ sits in $\mathcal R'_p$, a contradiction, which proves the claim.

Since $S$ has  (at most)  isolated singularities and degenerates to $R'$, it must have  at worst $A_n$--singularities, so it is a projective model of a smooth $K3$ surface. Thus we may find a one--dimensional family 
$(S_t, C_t)_{t\in \mathbb D}$ of points in $\V^ *_\delta$ parametrized by the disc $\mathbb D$, such that 
$R:=S_0 \in \mathfrak{S}_p$  and $\pi_p(R)=R'$ (that is, $R$  
is a birational modification of $R'$), 
$C_0$ is the total transform of a $C'_0$ on $R'$, $\psi^*_{\delta}(S_t, C_t)=\Gamma$ for all $t \in \mathbb D$, and $S_t$ is smooth for  all $t \in \mathbb D-\{0\}$.

Since $\Gamma$ has $g+1$ nodes, the curves $C_t$ (including $C_0$) have at least $g+1$ {\it unmarked} nodes mapping to the nodes of $\Gamma$ through the process of partial normalization at the $\delta$ marked nodes and possibly semi-stable reduction. On $C_0$ they must lie on the smooth locus of $R$.  (Indeed, it is well-known, and can be verified by a local computation, that a node of a limit curve lying on $E$ smooths as $R$ smooths, cf. e.g. \cite{C}, \cite[\S 2]{galati} or \cite[Pf. of Lemma 3.4]{galknu}.) This means that these nodes of $C_0$ lie on the exceptional curves of the morphism $R \to R'$. 
Therefore  the $g$ intersection points $\gamma_1\cap \gamma_2$ (all lying on the elliptic curve $E'$) plus one (at least) among the intersections of the line cycle of $C_0'$ with the elliptic curve $E'$ are contained in  the divisor $T\in |T^ 1_{R'}|$. Hence, we have $g+1\leqslant \deg(W)=16$, 
 and if $g \leqslant 14$, we  have  
$\dim  (L(\mathfrak K')) \leqslant h^0(T^1_{R'})-(g+1)=15-g$ for any irreducible component $\mathfrak K'$ of $\mathfrak K$, proving 
\eqref{eq:int2}. Therefore, we have left to prove that the case
$g=15$ cannot happen. 

To this end, assume $g=15$. The  above argument shows that the $g$ intersection points $\gamma_1\cap \gamma_2$ plus a point $x=\phi_{k,E'}(P)$ are the zeroes of a section of $T^ 1_{R'}$, for some integer $0 \leqslant k \leqslant \delta$ (cf. Definition \ref{def:nonpiaceraaconcy}). 

If $\delta$ is odd,  the $g$ intersection points $\gamma_1\cap \gamma_2$ are in  $|L_{E'}\otimes (L_2^*)^ {\otimes \frac{\delta+1}{2}}|$. It follows that 
\[ \O_{E'}(x) \cong  T^ 1_{R'} \otimes L^*_{E'}\otimes L_2^{\otimes \frac{\delta+1}{2}}\]
which implies that $x$ is uniquely determined, hence also the line chain of $C'_0$ is, contradicting the fact that $C'_0$ is general
in $W_{\delta}(R')$. 

If $\delta$ is even,  the $g$ intersection points $\gamma_1\cap \gamma_2$ are in 
\[|L_{E'} \otimes \O_{E'}(-P) \otimes (L_1^*)^ {\otimes \frac{\delta}{2}}|=
|L_{E'} \otimes \O_{E'}(-Q) \otimes (L_2^*)^ {\otimes \frac{\delta}{2}}|.\] 
Hence 
\begin{equation}
  \label{eq:relcazzo}
\O_{E'}(x) \cong   T^ 1_{R'} \otimes L_{E'}^ *\otimes \O_{E'}(P)\otimes L_1^ {\otimes \frac{\delta}{2}}\cong T^ 1_{R'} \otimes L_{E'}^ *\otimes \O_{E'}(Q)\otimes L_2^ {\otimes \frac{\delta}{2}} .\end{equation}

If $k$ is odd,  combining \eqref{eq:relgen-odd} (with  $y$ replaced by $P$) with \eqref{eq:relcazzo} yields 
\[ \O_{E'}(2P) \cong L_{E'}\otimes (T^ 1_{R'})^* \otimes L_2^ {\otimes \frac{k+1}{2}}\otimes (L_1^*)^{\otimes \frac{k+\delta-1}{2}},\]
which implies only finitely many choices for $P$, and we get a contradiction as before. 
If $k$ is even, then \eqref{eq:relgen-even}, with $x$ and $y$ replaced by $P$ and $x$ respectively, combined with \eqref{eq:relcazzo} yields 
\[ L_{E'} \simeq T^ 1_{R'} \otimes L_2^ {\otimes  \frac{k}{2}}\otimes L_1^ {\otimes \frac{\delta-k}{2}},\]
which, together with \eqref {eq:B} gives a non-trivial relation between $L_{E'}, L_1$ and $L_2$, a contradiction. 
\end{proof}

At this point, the proof of Theorem \ref{thm:main} for $m=1$ is complete  (noting that the assertion is trivial for $g=0$).

\section{More limits of nodal curves on reducible $K3$ surfaces} \label{sec:limnod}

In order to treat the case $m>1$ 
we  need to consider more limits of nodal curves on surfaces in $\mathfrak{S}_p$. They are similar to the ones constructed by X. Chen in \cite[\S 3.2]{C}. 

\begin{definition} \label{def:curvedichen}
For each $m \geqslant 1$, $R \in \mathfrak{S}_p$    and the corresponding $R':=\pi_p(R)  \in \R_p$,   we define $U'_m(R') \subset |mL_{R'}|$ to be the  locally closed subset of    curves of the form
\[ C'=C^1_1 \cup C^1_2 \cup \cdots \cup C^1_{m-1} \cup C^1_m \cup C^2_1 \cup C^2_2 \cup \cdots \cup C^2_{m-1} \cup C^2_m\]
where:\\
\begin{inparaenum}[$\bullet$]
\item $C^i_j \subset R'_i$ for $i=1,2$ and $1 \leqslant j \leqslant m-1$;\\
\item $C^i_j \in |\sigma_i|$ for $1 \leqslant j \leqslant m-1$ and $C^i_m \in |\sigma_i+mlF_i|$ if $p=2l+1$ is odd; \\
\item $C^i_j \in |\sigma_i+F_i|$ for $1 \leqslant j \leqslant m-1$ and $C^i_m \in |\sigma_i+(ml-m+1)F_i|$ if $p=2l$ is even; \\
\item there are points $P, Q_0, \ldots, Q_{2m}, Q$ on $E'$ such that
  \begin{eqnarray*}
   C^1_j \cap E'=Q_{2j-2}+Q_{2j-1}, & C^2_j \cap E'=Q_{2j-1}+Q_{2j}, & \text{for} \quad 1 \leqslant j \leqslant m-1 \\
 C^1_m \cap E'=2mlQ +P+ Q_{2m-2}, & C^2_m \cap E'=2mlQ+P+Q_0, & 
  \end{eqnarray*}
if $p$ is odd, and
  \begin{eqnarray*}
   C^1_j \cap E'=Q_{2j-2}+2Q_{2j-1}, & C^2_j \cap E'=2Q_{2j-1}+Q_{2j}, & 1 \leqslant j \leqslant m -1 \\
 C^1_m \cap E'=(2ml-2m+1)Q +P+ Q_{2m-2}, & C^2_m \cap E'=(2ml-2m+1)Q+P+Q_0, & 
  \end{eqnarray*}
if $p$ is even.
\end{inparaenum}

We denote by $U_m(R) \subset |mL_R|$ the set of total transforms of the curves in $U'_m(R')$ passing through two points of $T$ different from $Q$. 
\end{definition}

 We remark that the curves in $U'_m(R')$ (or $U_m(R)$) are tacnodal at $Q$ and, for even $p$, also at $Q_i$ with $i$ odd. 
Figure \ref{fig:dischen} depicts a member of $U'_3(R')$ for odd $p$. The intersections on $E'$ (which are not all shown in the figure) are marked with dots and are all transversal except at $Q$. The case of  $p$ even is similar, except that the intersections at $Q_1$ and $Q_3$ are tangential. 

\begin{figure}[ht] 
\[
\includegraphics[width=4cm]{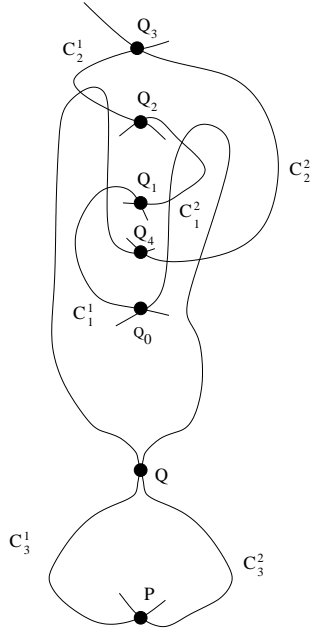} 
\] 
\label{fig:dischen}
\caption{Sketch of a member of $U'_3(R')$ when $p$ is odd. }
\end{figure}

\begin {lemma}\label{lem:clear} We have $\dim (U'_m(R'))=2$ and $\dim (U_m(R))=0$.
\end{lemma}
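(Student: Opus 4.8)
\textbf{Proof proposal for Lemma \ref{lem:clear}.}

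The plan is to compute $\dim(U'_m(R'))$ directly by counting the free parameters in the defining data, and then to deduce $\dim(U_m(R))=0$ by observing that passing through two prescribed points of $T$ imposes two independent conditions. First I would fix the elliptic normal curve $E'$ and the two line bundles $L_1,L_2$ (hence $R'=R'_1\cup R'_2$), since all of $U'_m(R')$ lives on a single fixed surface $R'\in\R_p$. The curve $C'$ is determined by its components $C^i_j$, and each such component, being a member of a fixed linear system on the rational scroll $R'_i$ meeting $E'$ in a prescribed divisor, is rigid once that divisor on $E'$ is fixed: indeed a divisor in $|\sigma_i+aF_i|$ on $R'_i\cong\PP^1\times\PP^1$ or $\FF_1$ restricts to $E'\equiv -K_{R'_i}$ in a complete linear system, and the restriction map is injective on the relevant systems (the kernel would be a curve in $|\sigma_i+aF_i|\otimes\O(-E')$, which is empty for the exponents appearing here). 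So the only moduli come from the choice of the points $P,Q_0,\dots,Q_{2m},Q$ on $E'$ subject to the linear-equivalence constraints forced by membership in the fixed linear systems $|mL_{R'}|$, $|\sigma_i+\cdots|$ etc.

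Next I would unwind those constraints. Each equation $C^i_j\cap E' = (\text{divisor})$ forces a linear equivalence on $E'$: e.g. for odd $p$, $Q_{2j-2}+Q_{2j-1}\in|\pi_i^*(\text{pt})|$-type relations coming from $\sigma_i\cdot E'$, and the relation $2mlQ+P+Q_{2m-2}\in |(\sigma_1+mlF_1)|_{E'}|$, and similarly on $R'_2$. Tracking these, once $Q$ is chosen the point $Q_0$ (say) is forced, then $Q_1$ is forced by the $C^1_1$ relation, then $Q_2$ by the $C^2_1$ relation, and so on down the chain, and finally $P$ is determined by the two ``$C^i_m$'' relations (which must be compatible — this compatibility is automatic because the total class is $mL_{R'}$, i.e. it is exactly the statement that $C'\in|mL_{R'}|$). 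Thus $Q$ is essentially the only free parameter, giving one dimension; the second dimension comes from the fact that the linear systems $|\sigma_i+mlF_i|$ (odd case) or $|\sigma_i+(ml-m+1)F_i|$ (even case) for the ``long'' component $C^i_m$ need not be $0$-dimensional after fixing the intersection with $E'$ — I would check the relevant $h^0$ on the scroll and see that exactly one extra parameter survives (the residual divisor off $E'$ of the long component on one of the two scrolls). Combining, $\dim(U'_m(R'))=2$. A cleaner alternative is to exhibit a dominant map $U'_m(R')\to E'\times(\text{residual linear system})$ or to mimic verbatim the fibration argument of Proposition \ref{prop:rightdim}: fix the distinguished point data, get a unique ``curve chain'', and the fiber is an open subset of a $\PP^1$.

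Finally, for $U_m(R)$: a curve in $U_m(R)$ is the total transform of a curve in $U'_m(R')$ that in addition passes through two assigned points $t_1,t_2\in T$ (distinct from $Q$). Passing through a general point of $E'$ is a single linear condition on $C'\in U'_m(R')$ — more precisely it forces one of the $Q_i$ or the residual free point to equal $t_1$, cutting the dimension by one; passing through $t_2$ cuts it by one more, and these are independent for $t_1\neq t_2$ in general position. Hence $\dim(U_m(R))=2-2=0$. The main obstacle I anticipate is the bookkeeping in the second paragraph: making sure the chain of forced linear equivalences is consistent and that precisely the claimed number of parameters ($Q$ plus one residual) remains free, rather than zero or more than one — in particular verifying that the two ``long component'' conditions on $C^1_m$ and $C^2_m$ are compatible and jointly pin down $P$ but leave one residual degree of freedom, which is really the content of $C'\in|mL_{R'}|$ together with a dimension count of $|\sigma_i+(\cdots)F_i|$ on the scroll.
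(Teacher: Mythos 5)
Your overall strategy -- rigidify each component $C^i_j$ by its trace on $E'$ and then count the freedom in the points $P,Q_0,\dots,Q$ -- is exactly the paper's, whose entire proof is the observation that each of the three maps $U'_m(R')\to E'^2$ sending $C'$ to $(Q,P)$, $(Q,Q_0)$ or $(Q,Q_{2m-2})$ is an isomorphism onto its (open) image. However, the bookkeeping in your second paragraph, which you yourself flag as the unresolved obstacle, contains two errors that happen to cancel, so as written the argument has a genuine gap. First, $Q_0$ is \emph{not} forced by $Q$: no defining relation involves $Q$ and $Q_0$ alone (the relation $C^2_m\cap E'=2mlQ+P+Q_0$ also involves $P$, which is not yet known at that stage), and in fact $Q_0$ is precisely the second free parameter. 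Second, the long components have \emph{no} residual freedom once their full trace on $E'$ is fixed: for $p=2l+1$ odd, $h^0(R'_1,\sigma_1+mlF_1)=2ml+2=h^0(E',(\sigma_1+mlF_1)|_{E'})$ and the kernel of restriction is $H^0(-\sigma_1+(ml-2)F_1)=0$, so restriction is an isomorphism and $C^1_m$ is determined by $C^1_m\cap E'$ -- this follows from your own first-paragraph observation, which the second paragraph then contradicts. The correct count is: choose $Q$ and $Q_0$ freely ($2$ parameters); the relations $C^1_j\cap E'$ and $C^2_j\cap E'$ determine $Q_1,\dots,Q_{2m-2}$ successively (uniquely for $p$ odd, finitely for $p$ even); the $C^1_m$-relation determines $P$ from $Q$ and $Q_{2m-2}$; and the $C^2_m$-relation is then automatically satisfied, the compatibility being forced, as you correctly anticipate, by $C'\in|mL_{R'}|$ (concretely, $Q_{2m-2}-Q_0\sim(m-1)(\sigma_2-\sigma_1)|_{E'}$ while $(\sigma_2-\sigma_1)|_{E'}\sim l(L_1-L_2)$, and these match the $C^i_m$-relations). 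Thus both dimensions come from the pair of points $(Q,Q_0)$, and none from a linear system on the scrolls; your ``cleaner alternative'' inherits the same miscount, since the fiber over a fixed $Q$ is an open subset of $E'$ (parametrized by $Q_0$), not of a $\PP^1$ or of a residual linear system.

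Your deduction of $\dim(U_m(R))=0$ is fine and is what the paper intends: requiring the curve to pass through an assigned point $t\in T$, $t\neq Q$, means requiring one of the finitely many points of $C'\cap E'$ to equal $t$, a divisorial condition on the surface $U'_m(R')\cong E'^2$, and the two conditions for two distinct general points of $T$ are independent.
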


\begin{proof}
It is easily seen that any of the three maps $U'_m(R')\to {E'}^2$ mapping $C'$ to $(Q,P)$,$(Q,Q_0)$ and $(Q,Q_{2m-2})$, respectively, is an isomorphism, proving the lemma. 
\end{proof}

\begin{proposition} \label{prop:curvedichen}
 For any $m \geqslant 1$, the curves in $U_m(R)$ deform to a set $U_m(S)$ of rational {\it nodal} curves in $|mL|$
on the general $[S,L] \in \K_p$. 

More precisely, there is a $19$-dimensional subvariety $\U_m \subset \V_{m,p(m)}$ with a partial compactification $\overline{\U}_m \to \overline{\K}_p$ whose fiber over $R \in \mathfrak{S}_p$ is
$U_m(R)$ and whose fiber $U_m(S)$ over a general $S \in \K_p$ is a nonempty finite subset of  $V_{m,p(m)}(S)$. 
\end{proposition}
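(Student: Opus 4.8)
The plan is to show that the curves in $U_m(R)$ are, up to a finite ambiguity, limits of rational nodal curves on smooth $K3$ surfaces, via a deformation argument that runs parallel to the treatment of the curves $W_\delta(R)$ but keeping track of the two prescribed tacnodal-type specializations. First I would verify that a general $C' \in U'_m(R')$ lies in $\overline{V}_{m,p(m)}(R)$: one counts the nodes and checks that the partial normalization at the $p(m)$ marked nodes is connected, using that the components $C^i_j$ are glued in a chain along $E'$ exactly as the line chains in Definition \ref{def:compopartenza}. Here the genus bookkeeping is the same as for the hyperplane-section case but with $|mL_{R'}|$: the arithmetic genus of $C'$ is $p(m)$, and by construction $C'$ degenerates the curves used for $m=1$ together with the extra tacnodal pieces, so it acquires $p(m)$ nodes in the smooth locus of $R$ after passing to the total transform $C \in U_m(R)$ on $R = R_1 \cup R_2$; thus $C$ is a point of $V_{m,p(m)}(R)$, which is smooth of dimension $g = 0$ there since all $p(m)$ marked nodes are non-disconnecting (cf. \cite[Rmk.~1.1]{ck}). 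This gives the relative scheme $\overline{\U}_m \to \overline{\K}_p$ with fiber $U_m(R)$ over general $R \in \mathfrak{S}_p$, and Lemma \ref{lem:clear} gives $\dim U_m(R) = 0$, hence $\dim \overline{\U}_m = \dim \mathfrak{S}_p = 18$ along the boundary, so $\dim \U_m = 19$ once we know the boundary point lifts.

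The heart of the argument is the lifting: I would take a general $R = (R',T) \in \mathfrak{S}_p$, a general $C \in U_m(R)$ (so $C$ passes through exactly two of the $16$ points of $T$, both different from $Q$), and show that the pair $(R, C)$ is a limit of pairs $(S_t, C_t)$ with $S_t$ a smooth $K3$ surface in $\K_p$ and $C_t$ an irreducible rational nodal curve in $|mL_t|$. The obstruction-theoretic framework is the one already used in the proof of Proposition \ref{prop:unica2}: the smoothing of $R$ to a smooth $K3$ is obstructed along the divisor $T \in |T^1_{R'}|$, and a node of a limit curve lying on $E$ fails to smooth as $R$ smooths unless it is absorbed into $T$ (cf. \cite{C}, \cite[\S 2]{galati}, \cite[Pf. of Lemma 3.4]{galknu}). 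Since $C$ is required to pass through two points of $T$, two of the singular points of $C$ that lie on $E$ are placed precisely at smoothable locations; the remaining intersections of $C$ with $E$, and in particular the tacnodes at $Q$ (and at the $Q_i$ with $i$ odd when $p$ is even), are of tangential type, and the standard local analysis of tacnodal limits along $E$ (again following \cite[\S 3.2]{C} and \cite{galknu}) shows each tacnode deforms to two nodes on the nearby smooth surface, while the transverse intersections of $C$ with $E$ are resolved/smoothed in the family. A dimension count as in Proposition \ref{prop:rightdim} --- combining the $15$ moduli of $T \in |T^1_{R'}|$, the $3$-dimensional $\R_p$, and the count of curves on $R'$ --- shows that the family $\overline{\U}_m$ dominates $\mathfrak{S}_p$ with finite fibers, and that the generic smoothness of $V_{m,p(m)}(S)$ for smooth $S$ forces the fiber $U_m(S)$ over general $S \in \K_p$ to be a nonempty finite set of rational nodal curves in $V_{m,p(m)}(S)$.

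More concretely, I would organize the lifting in three steps. \emph{Step 1}: choose the local smoothing of $R'$ (equivalently a general tangent direction to $\H_p$ at $R'$) so that the associated divisor $T \in |T^1_{R'}|$ contains the two prescribed points through which $C$ passes; this is possible since $|T^1_{R'}|$ has dimension $15$ and imposing two base points is two conditions, leaving $13$-dimensional freedom, more than enough, and the two points are general on $E'$ by the genericity of $C$ in $U_m(R)$. \emph{Step 2}: on the small resolution $R = R_1 \cup R_2$, analyze the singularities of the total transform $C$: $d$-many nodes off $E$ coming from the line-chain-like pieces (handled as in \S\ref{sec:limnod1}), the two nodes on the exceptional curves $\mathfrak e$ over the two points of $T$ (marked, non-disconnecting), and the tacnodes along $E$. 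Invoking the cited local computations, produce a one-parameter deformation $(S_t, C_t)$ with $S_0 = R$, $C_0 = C$, and $S_t$ smooth for $t \neq 0$, such that $C_t$ is nodal with the same arithmetic genus $p(m)$ and hence, being rational, has exactly $p(m)$ nodes --- i.e., $C_t \in V_{m,p(m)}(S_t)$. \emph{Step 3}: assemble these deformations into the total space $\overline{\U}_m$, check flatness over $\overline{\K}_p$, and use upper semicontinuity of fiber dimension together with $\dim U_m(R) = 0$ to conclude $\dim U_m(S) = 0$ and $U_m(S) \neq \emptyset$ for general $S$, so $U_m(S) \subset V_{m,p(m)}(S)$ is a nonempty finite subset and $\dim \overline{\U}_m = 19$.

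\textbf{Main obstacle.} The delicate point is Step 2, the local deformation analysis at the tacnodal points of $C$ lying on $E$: one must check that the chosen global smoothing direction of $R'$ (constrained in Step 1 to contain the two points of $T$) is simultaneously compatible with smoothing each tacnode into two nodes rather than into a worse singularity or into a configuration that fails to survive in $|mL_t|$. This requires combining the obstruction map $T_{\H_p,R'} \to H^0(T^1_{R'})$ of \eqref{standard} with the local tacnode-smoothing computations of \cite{C} and \cite{galknu}, and verifying that the relevant first-order conditions are independent --- i.e., that the tacnodal conditions do not force extra vanishing on the section of $T^1_{R'}$ beyond the two points already prescribed. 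Once this compatibility is established the rest is bookkeeping: genus counts, the finiteness statements of Lemma \ref{lem:clear}, and semicontinuity.
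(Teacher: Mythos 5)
Your overall strategy --- degenerate to $R$, invoke the local tacnode--smoothing results of Chen and Galati--Knutsen, and conclude by a dimension/semicontinuity argument --- is the same as the paper's, but the quantitative heart of the argument is missing and one key claim is wrong. The point $Q$ on a curve $C \in U_m(R)$ is not an ordinary tacnode "deforming to two nodes": for $p=2l+1$ odd the two branches $C^1_m$ and $C^2_m$ each meet $E'$ with multiplicity $2ml$ at $Q$, so $Q$ is a single $2ml$-tacnode, and by the cited results (\cite{ran}, \cite[Sect.~2.4]{ch}, \cite[Thm 3.1]{galknu}) an $n$-tacnode on the double curve deforms to $n-1$ nodes under a general deformation of $R$ to a smooth $K3$; hence $Q$ contributes $2ml-1$ nodes, not two. (Even for an ordinary tacnode your count would be off: a $2$-tacnode yields one node.) The paper's proof consists precisely of the node count you skip: $C$ has $2(m-1)\cdot ml + 2 = 2lm(m-1)+2$ nodes on the smooth locus of $R$ (the intersections $C^i_j\cap C^i_m$, with $C^i_j\cdot C^i_m = \sigma_i\cdot(\sigma_i+mlF_i)=ml$, plus the two nodes on $\mathfrak e$), and these survive; the transversal intersections of the components along $E$ smooth away as $R$ smooths; the tacnode contributes $2ml-1$ further nodes; the total is $2lm^2+1 = p(m)$, whence the deformed curve has geometric genus $0$.

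Your Step 2 instead argues that ``$C_t$ is nodal with the same arithmetic genus $p(m)$ and hence, being rational, has exactly $p(m)$ nodes,'' which is circular: rationality of $C_t$ is exactly what has to be proved, and it is equivalent to the node count above. Without identifying the order of the tacnode at $Q$ (and the tangential intersections at the $Q_i$ with $i$ odd when $p$ is even) and invoking the statement that an $n$-tacnode produces $n-1$ nodes, the deformed curves could a priori have positive geometric genus and the proposition would fail. The remaining ingredients of your outline --- connectedness of the partial normalization because $C$ contains no subcurve in $|hL_R|$ for $1\leqslant h\leqslant m-1$, the fact that $\dim (U_m(R))=0$ from Lemma \ref{lem:clear}, and semicontinuity over $\overline{\K}_p$ --- do match the paper.
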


\begin{proof}
  This follows the lines of proof in \cite[\S 3.2]{C} or \cite[Proof of Thm.~1.1]{galknu}. For the reader's convenience, we outline here the main ideas, without dwelling on details.

Take a curve $C \in U_m(R)$ that is the total transform of a general curve $C' \in U'_m(R')$. 

If $p=2l+1$ is odd, then $C$ has a total of $2lm(m-1)+2$ nodes
on the smooth locus of $R$:\\
\begin{inparaenum} [$\bullet$]
\item two nodes occur on the two exceptional curves of $\mathfrak e$ that $C$ contains;\\
\item  the remaining nodes are the points
$C^i_j \cap C^i_m$ for $i=1,2$, $1\leqslant j\leqslant m-1$, a total of $2(m-1)$ times 
\[ C^i_j\cdot C^i_m=\sigma_i\cdot (\sigma_i+mlF_i)=ml.\]
\end{inparaenum}
 
Moreover, the point $Q$ on $C$ is a $2ml$--tacnode, which can be (locally) deformed to $2ml-1$ nodes for a general deformation of $R$ to a $K3$ surface  (see \cite {ran},
\cite[Sect. 2.4]{ch} or \cite[Thm 3.1]{galknu} for a generalization of this result). Since, by construction, $C$ does not contain subcurves lying in $|hL_R|$ for any $1 \leqslant h \leqslant m-1$ (this is like saying that the aforementioned singularities of $C$ are non--disconnecting), one checks that, for a general deformation of $[R,L_R]$ to a $K3$ surface $[S,L]$, the curve $C$ deforms to an irreducible, nodal curve in $|mL|$  with a total of
\[ \Big[2nm(m-1)+2\Big] + \Big[2mn-1\Big]=2nm^2+1=p(m)\]
nodes, as claimed.

The case $p$ even is similar and can be left to the reader. \end{proof}

\section{Dominance of the moduli map for $m \geqslant 2$}\label{sec:dominance}
In this section we will use  Proposition \ref{prop:unica2}  and the curves in Definition \ref{def:curvedichen} to prove part $(A)$ of Theorem \ref {thm:main} for $m\geqslant 2$.  We assume $p\geqslant 3$    in the whole section.   

The following Proposition fixes a gap in \cite [Prop. 1.2]{ck}, whose proof is incomplete, cf. \cite{ckerr}:

\begin{prop} \label{lemma:solok3}
Let $S \in \K_p$ be general  and $m \geqslant 1$ be an integer. Let $V$ be a fully complete component of $V_{m,\delta}(S)$. Then the moduli map 
\[
{\psi_{m,\delta}}_{|V}: V\to \overline \M_{g_{m,\delta}}
\]
is generically finite. In particular this applies to  any  component of  $V_{\delta}^ *(S)$.
\end{prop}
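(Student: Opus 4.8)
The statement asserts generic finiteness of the moduli map on a fully complete component $V$ of $V_{m,\delta}(S)$ for general $S \in \K_p$. The plan is to reduce to the known case $\delta = p(m)$, i.e. to the finitely many rational nodal curves. First I would use the filtration from Remark \ref{rem:filtr}: since $V$ is fully complete, it contains points of $V_{[m,\delta,p(m)]}(S)$, which is (an open subset of) a finite set of rational nodal curves. Pick such a rational nodal curve $C_0 \in V$ with normalization $\PP^1$, and consider a general point $C \in V$. The idea is that a positive-dimensional fiber of $\psi_{m,\delta}|_V$ through $C$ would, by the filtration (each step cuts codimension one by degenerating a further pair of branches to a node), propagate down to a positive-dimensional family of curves all mapping to the same rational stable curve — but rational curves are rigid in moduli, so one must instead bound the fiber dimension against $\max\{0, 3g-3 - (\dim V - ?)\}$ directly.

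\textbf{Main argument.} More concretely, I would argue as follows. Suppose $\psi_{m,\delta}|_V$ is not generically finite; let $F$ be a general fiber, $\dim F \geqslant 1$. Since $V$ is fully complete, for each $\delta' $ with $\delta \leqslant \delta' \leqslant p(m)$ the intersection $V \cap V_{[m,\delta,\delta']}(S)$ is non-empty of codimension $\delta'-\delta$ in $V$, and its general point is a curve with exactly $\delta'$ nodes ($\delta$ marked). The moduli map sends this locus into the boundary stratum $\Delta_{g,\delta'-\delta} \subset \overline\M_g$ of $(\delta'-\delta)$-nodal curves (as recorded just before \S\ref{S:stableK3}). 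Now the key point: a fiber $F$ of dimension $\geqslant 1$ over a general point of the image, intersected with the preimage of $\Delta_{g,\delta'-\delta}$, still has dimension $\geqslant 1$ for $\delta'$ up to $p(m)$ — because degenerating nodes of the source curve does not decrease the fiber dimension of the moduli map (the extra nodes are matched by the boundary structure of the target). Pushing $\delta'$ all the way to $p(m)$, I get a positive-dimensional family of rational nodal curves in $|mL|$ on $S$, all with the \emph{same} stable model (a chain/tree of $\PP^1$'s with fixed cross-ratios). But the rational curves in $U_m(S)$ (and more generally the general point of a fully complete component restricted to $\delta' = p(m)$) form a \emph{finite} set by Proposition \ref{prop:curvedichen} — indeed any fully complete component meets this finite locus — so a positive-dimensional family is impossible. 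This contradiction gives generic finiteness.

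\textbf{Technical core and obstacle.} The delicate point — and the gap in \cite{ck} that this Proposition is meant to fix — is the claim that passing from $\delta$ to $\delta'$ does not drop the fiber dimension, i.e. that $F \cap V_{[m,\delta,\delta']}(S) \neq \emptyset$ and has dimension $\geqslant \dim F - (\delta'-\delta) + (\delta'-\delta) = \dim F$ ... no, rather that its image-fiber dimension is preserved. The honest way to see this is: the moduli map factors as $V_{m,\delta}(S) \dashrightarrow V_{m,\delta'}(S) \to \overline\M_g$ over the locus $V_{[m,\delta,\delta']}$, via the étale cover $X_{m,\delta,\delta'}(S)$ of Remark \ref{rem:filtr} which just reshuffles which nodes are marked; hence a fiber of $\psi_{m,\delta}|_V$ of positive dimension, restricted over the $(\delta'-\delta)$-complete locus, produces a positive-dimensional fiber of $\psi_{m,\delta'}$ on the component $W$ of $V_{m,\delta'}(S)$ "included" in $V$. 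Iterating to $\delta' = p(m)$ and invoking the finiteness of rational nodal curves on general $S$ in any fully complete component (Proposition \ref{prop:curvedichen}, together with the fact, which I would need to verify carefully, that \emph{every} fully complete component's $p(m)$-nodal locus lands in such a finite set — this is where monodromy / Remark \ref{rem:imp} enters) closes the argument. The main obstacle is precisely making this last finiteness input rigorous: one must ensure there is no positive-dimensional family of rational nodal curves in $|mL|$ on a general $K3$ with prescribed stable model, which follows because $\dim V_{m,p(m)}(S) = 0$. The final sentence ("in particular this applies to any component of $V_\delta^*(S)$") is then immediate from Remark \ref{rem:imp}, where it is shown that every component of $V_\delta^*(S)$ is fully complete.
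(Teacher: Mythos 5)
Your strategy --- argue by contradiction, push a positive-dimensional fiber down the filtration to the rational locus, and contradict finiteness of genus-zero curves in $|mL|$ --- is viable and genuinely different from the paper's proof, but the central step does not work as you state it. You claim that a fiber $F$ of dimension $\geqslant 1$ ``over a general point of the image, intersected with the preimage of $\Delta_{g,\delta'-\delta}$, still has dimension $\geqslant 1$''; a general fiber lies over a point of $\M_g\setminus\Delta_{g,1}$ and does not meet the preimage of the boundary at all, so that intersection is empty, and the subsequent ``factorization through $V_{m,\delta'}(S)$'' inherits the same problem. What you actually need is upper semicontinuity of fiber dimension on the \emph{source}: if the general fiber of $\psi_{m,\delta}|_V$ has dimension $\geqslant 1$, then so does the fiber through \emph{every} point of $V$, in particular through a point $C_0\in V\cap V_{[m,\delta,p(m)]}(S)$, which is nonempty by full completeness. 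One then observes that the whole fiber of $\psi_{m,\delta}|_V$ through $C_0$ (not just its trace on the deepest stratum) consists of curves whose stable model has geometric genus $0$, hence of geometric-genus-$0$ members of $|mL|$; since $S$ is general with $\Pic(S)=\ZZ L$, these are finite in number --- note that $\dim V_{m,p(m)}(S)=0$ alone does not cover the reducible genus-zero curves, which you would also have to exclude. With this repair your argument closes, and the intermediate strata $\delta<\delta'<p(m)$ are not needed at all.

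For comparison, the paper's proof is shorter and avoids fibers entirely: it sets $V_i=\psi_{m,\delta}(V_{[m,\delta,p(m)-i]}(S)\cap V)$ and shows $\overline V_{i-1}$ is a \emph{proper} closed subvariety of $\overline V_i$, because $V_{i-1}$ lies in the boundary stratum $\Delta_{g,g-i+1}$ while the general point of each component of $V_{[m,\delta,p(m)-i]}(S)\cap V$ has exactly $p(m)-i$ nodes and so maps outside it. Since $\dim V_0=0$, the chain of $g_{m,\delta}$ strict inclusions forces $\dim \psi_{m,\delta}(V)\geqslant g_{m,\delta}=\dim V$, which is generic finiteness. Both proofs hinge on full completeness and on zero-dimensionality at the bottom of the filtration; yours additionally requires semicontinuity and the finiteness of \emph{all} genus-zero curves in $|mL|$, while the paper's only needs the strictness of the chain of images. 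Your final reduction of the case of $V^*_\delta(S)$ to Remark \ref{rem:imp} agrees with the paper.
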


\begin{proof} By fully completeness,  we have the filtration
\[
 \emptyset \neq V_{[m,\delta,{ p(m)}]}(S)
 \cap V  \subset V_{[m,\delta,{ p(m)}-1]}(S) \cap V  \subset \ldots \subset V_{[m,\delta,\delta+1]}(S) \cap V   \subset  V  \]
Set $V_i={\psi_{m,\delta}}(V_{[m,\delta, { p(m)}-i] }(S) \cap V )$, for $0\leqslant i\leqslant {g_{m,\delta}}-1$, and 
$V_{g_{m,\delta}}=\psi_{m,\delta}(V)$. Then we  have
\[
 \emptyset \neq  V_0\subsetneq V_1\subsetneq \ldots \subsetneq V_{g_{m,\delta}-1}\subsetneq V_{g_{m,\delta}},
\]  where $\overline V_i$ is a proper closed subvariety of $\overline V_{i-1}$ for all $i=0,\ldots, g_{m,\delta}-1$.

Since $\dim (V_0)=0$, we have $\dim (V_{g_{m,\delta}})\geqslant g_{m,\delta}$, which yields the first assertion. The final assertion follows by Remark \ref {rem:imp}. 
\end{proof}

\begin{remark} \label{rem:generibassi} The existence of fully complete components $V$ of $V_{m,\delta}(S)$ for all $m\geqslant 1$, for a general $S\in \mathcal K_p$,  follows from the existence of irreducible nodal rational curves in $|mL|$ proved in \cite {C} (or by Proposition \ref{prop:curvedichen}). Thus the  general point in $V$ parametrizes an {\it irreducible} curve. 

In particular, Proposition \ref {lemma:solok3} shows  that the moduli map $\psi_{m,p(m)-1}: \V_{m,p(m)-1} \to \M_1$ 
is dominant even when restricted to curves on a single general $K3$, for any $m \geqslant 1$. 
\end{remark}

Next we need a technical construction and  a lemma.  Consider the commutative diagram
\[
\xymatrix{
\V^*_{\delta} \ar[d]_{\phi^*_{\delta}}\ar[drr]^ {\psi^ *_{\delta}} \ar[rr]^ {\widetilde \psi_\delta} && \widetilde  \M_{g_{1,\delta}}\ar[d]^{\varphi_{\delta}} \\
 \overline {\K}_p  &&\overline \M_{g_{1,\delta}}
 }
\]
where the triangle is the Stein factorization of $\psi^ *_\delta$. We will abuse notation and we will  
identify an element $\Gamma\in \widetilde \M_{g_{1,\delta}}$ with its image in $\overline \M_{g_{1,\delta}}$. 

Assume $1 \leqslant {g_{1,\delta}}\leqslant 7$.  By  Proposition \ref{prop:unica2},  given a general $\Gamma\in {\rm Im}(\widetilde \psi_{\delta})$, the fiber $\widetilde \psi_{\delta}^ {-1}(\Gamma)$ is irreducible of dimension
\begin{equation*}\label{eq:fiber}
\dim(\widetilde \psi_{\delta}^ {-1}(\Gamma))=\dim (\V^*_{\delta}) - \dim (\widetilde \M_{{g_{1,\delta}}}) =22-2{g_{1,\delta}}.
\end{equation*}
Set  $T(\Gamma):= \phi^*_{\delta}(\widetilde \psi_{\delta}^ {-1}(\Gamma))$
which is irreducible and,  by Proposition \ref {lemma:solok3},  one has
\begin{equation} \label{eq:fiber'} 
\dim (T(\Gamma))=\dim(\widetilde \psi_{\delta}^ {-1}(\Gamma))=22-2{g_{1,\delta}}.
\end{equation}

 Now we need to introduce the irreducible component $\mathcal V^ \bullet_p$ of $ \overline {\mathcal V}_{1,p}$ containing $\mathcal W_{p-2,2}$, whose fibre over $S\in \mathcal K_p$ we will denote by $V^ \bullet_p(S)$. Consider a general  $S\in \K_p$. For any pair $(s,a)$ of  positive integers, with  $s \leqslant 4$ (hence ${g_{1,\delta}}+s \leqslant 11$), there  are on $S$ finitely many curves of the form
\begin{equation}\label{eq:z}
Z=B_{1}+\cdots+ B_{s-1}+ B_{s},\,\,\, \text{with}\,\,\, B_i\in {V^ \bullet_p(S)},\,\,\, \text{for}\,\,\, 1\leqslant i\leqslant s-1, \,\,\,\text{and}\,\,\, B_s\in U_a(S),
\end{equation}where $U_a(S)$ is as in  Proposition \ref{prop:curvedichen} (cf. Definition\,\ref{def:curvedichen}).

Given a general $(S, C)\in \V^ *_\delta$, set $\Gamma:=\widetilde \psi_\delta(S, C)$, which is the (class of the) normalization $\widetilde C$ of $C$. For all $S'\in T(\Gamma)$, we have a  morphism $\Gamma \to S'$,  whose image we denote by $C'$,  and each of the curves $B_i$ on $S'$ cuts out on 
$C'$ a divisor, which can be pulled--back to a divisor $\mathfrak g_i$ on $\Gamma$, for $1\leqslant i\leqslant s$.

  Let $\mathfrak T \subset ({\rm Sym}^ 2(\Gamma))^ s \times T(\Gamma)$ be the incidence subvariety described by the points 
$$((\mathfrak d_1,\ldots, \mathfrak d_s), S')\in 
({\rm Sym}^ 2(\Gamma))^ s \times T(\Gamma)$$ such that
$\mathfrak d_i\leqslant \mathfrak g_i$, with $\mathfrak g_i$ the divisor cut out by $B_i\subset S'$ on $\Gamma$, for all $i=1,\ldots, s$.  Then let $\mathfrak I$ be the image of $\mathfrak T$ in $({\rm Sym}^ 2(\Gamma))^ s$.

\begin{lemma}\label{lemma:fibra} In  the above setting:\\
\begin{inparaenum}[(i)]
\item $\mathfrak I=({\rm Sym}^ 2(\Gamma))^ s$;\\
\item  for the general $S'\in T(\Gamma)$ the curves of the form $C'+Z$  (with $Z$ as in \eqref{eq:z})  are nodal.
\end{inparaenum}
\end{lemma}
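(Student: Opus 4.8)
The statement has two parts, and the natural strategy is a dimension count combined with a degeneration/genericity argument. For part (i), the claim is that as $S'$ varies in $T(\Gamma)$, the divisors $\mathfrak g_i$ cut out on $\Gamma$ by the curves $B_i \subset S'$ (of which there are finitely many of the form \eqref{eq:z} for fixed $S'$) sweep out, via their length-$2$ subdivisors, the whole of $(\Sym^2(\Gamma))^s$. The plan is to first fix a general point $S' \in T(\Gamma)$; since $T(\Gamma)$ has dimension $22-2g_{1,\delta}$ by \eqref{eq:fiber'}, and $g_{1,\delta} \leqslant 7$ while $s \leqslant 4$, there is enough room to move. The key observation is that the curves $B_i$ live in moving families on $S'$: the $B_i \in V^\bullet_p(S')$ for $i \leqslant s-1$ vary in a component $V^\bullet_p(S')$ of the Severi variety, and $B_s \in U_a(S')$ is one of finitely many rational nodal curves, but the whole configuration $Z$ also moves because $S'$ itself moves in $T(\Gamma)$. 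I would parametrize the incidence variety $\mathfrak{T}$ and compute: the fiber of $\mathfrak{T} \to T(\Gamma)$ over $S'$ is (an open subset of) $\prod_i \Sym^2(\mathfrak g_i)$, which is $2s$-dimensional, so $\dim \mathfrak{T} = 2s + (22 - 2g_{1,\delta})$. The image $\mathfrak{I} \subset (\Sym^2(\Gamma))^s$ has dimension $2s$; to conclude $\mathfrak{I}$ is everything, I need the map $\mathfrak{T} \to (\Sym^2(\Gamma))^s$ to be dominant, equivalently that a general $2s$-tuple of point-pairs on $\Gamma$ is achieved.

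\textbf{The core of (i).} I expect the heart of the matter is to show that, as $S'$ ranges over $T(\Gamma)$, one can independently prescribe where each $\mathfrak g_i$ meets $\Gamma$ — at least a length-$2$ piece of it. For this I would use the description of $T(\Gamma)$ coming from the earlier sections: $T(\Gamma) = \phi^*_\delta(\widetilde\psi_\delta^{-1}(\Gamma))$ parametrizes $K3$ surfaces $S'$ admitting a map from $\widetilde C = \Gamma$ whose image is a nodal curve $C'$ with the prescribed geometric genus. The morphism $\Gamma \to S'$ varies, and hence so does the position of $C'$ and of any auxiliary curve $B_i$ relative to $C'$. Concretely, I would argue that the map sending $S' \in T(\Gamma)$ to the tuple of points $(\mathfrak g_1 \cap C', \ldots)$ — or rather to small pieces thereof — has image of the expected dimension by exhibiting enough first-order deformations: deforming $S'$ inside $T(\Gamma)$ moves the nodal model $C'$, and a general such deformation moves the intersection points $B_i \cap C'$ in all directions on $\Gamma$. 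Since $\dim T(\Gamma) = 22-2g_{1,\delta} \geqslant 22-14 = 8 \geqslant 2s$ when $s \leqslant 4$, a transversality/general-position argument (e.g. via a Bertini-type or monodromy statement for the $B_i$'s, which are themselves generic in their Severi components) should give surjectivity onto $(\Sym^2(\Gamma))^s$. This is the step I expect to be the main obstacle: making precise that the freedom in choosing $S' \in T(\Gamma)$ genuinely decouples across the $s$ factors, rather than imposing hidden relations among the divisors $\mathfrak g_i$.

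\textbf{Part (ii).} The nodality of $C' + Z$ for general $S' \in T(\Gamma)$ is more routine and should follow from an open-condition argument. The curve $C'$ is the nodal image of $\widetilde C$ on $S'$ (nodal by construction of $\V^*_\delta$ and genericity), the $B_i \in V^\bullet_p(S')$ for $i < s$ are nodal by definition of that Severi component, and $B_s \in U_a(S')$ is nodal by Proposition \ref{prop:curvedichen}. So it remains to check that the pairwise intersections $C' \cap B_i$, $B_i \cap B_j$ are transverse and that no three components pass through a common point, for general $S'$. I would argue by semicontinuity/genericity: being nodal is an open condition on the pair $(S', \text{curve})$, so it suffices to exhibit one $S'$ (possibly in a partial compactification, e.g. over $\mathfrak{S}_p$, using the explicit reducible models $R$ and the explicit curves $W_{p-2,2}(R)$ and $U_m(R)$) where the configuration is nodal, and then invoke that a general $S' \in T(\Gamma)$ inherits this. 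The explicit combinatorics of intersections with $E'$ spelled out in Definitions \ref{def:compopartenza} and \ref{def:curvedichen} make such a check feasible on the special fiber; alternatively one can note that the $B_i$ move enough (by part (i)) that their intersections with $C'$ and with each other can be taken general, forcing transversality.

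\textbf{Order of execution.} First set up $\mathfrak{T}$ and compute its dimension as $2s + \dim T(\Gamma)$ via the fiber description. Second, prove dominance of $\mathfrak{T} \to (\Sym^2(\Gamma))^s$ — the crux — using the freedom in $S' \in T(\Gamma)$ and a general-position argument for the auxiliary curves; this gives (i). Third, deduce (ii) by an openness argument, checking nodality either on a boundary surface in $\mathfrak{S}_p$ via the explicit models, or by using the genericity of the $B_i$ established in (i) to make all relevant intersections transverse.
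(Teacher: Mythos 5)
Your overall strategy---a dimension count plus a degeneration/general-position argument---points in the right direction, and you correctly flag the crux: showing that the freedom in $S'\in T(\Gamma)$ actually produces arbitrary $2s$-tuples of point-pairs on $\Gamma$. But you do not resolve that crux, and the mechanism you gesture at is not available. On a fixed general $S'$ the curves $B_i$ are \emph{rigid}: $V^\bullet_p(S')$ and $U_a(S')$ are finite sets (they parametrize rational curves, so the corresponding Severi varieties have dimension $0$), so there is no ``moving family of $B_i$'s on $S'$'' and no Bertini or monodromy argument to run there. Relatedly, your count $\dim(\mathfrak T)=2s+\dim (T(\Gamma))$ is wrong: the fiber of $\mathfrak T\to T(\Gamma)$ over $S'$ is the \emph{finite} set of length-$2$ subdivisors of the finitely many fixed divisors $\mathfrak g_i$, so $\dim(\mathfrak T)=\dim( T(\Gamma))=22-2g_{1,\delta}$; this still leaves room for dominance when $s\leqslant 4$, but the inequality alone proves nothing.

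The missing idea is the specific degeneration the paper uses. One specializes $(S,C)$ to $(R_0,C_0)\in\W_{\delta}$ over a reducible surface, and combines the bounds from the proof of Proposition \ref{prop:unica2} with Corollary \ref{cor:unica1} to show that every component of $T(\Gamma_0)$ has dimension exactly $22-2g_{1,\delta}$ while its image in $\R_p$ has dimension at most $\max\{0,7-g_{1,\delta}\}$; hence $T(\Gamma_0)$ contains a family $\mathcal X$ of dimension at least $2s$ of surfaces that are all modifications of one \emph{fixed} $R'$, differing only in the divisor $T\in|T^1_{R'}|$. On the fixed $R'$ the curves $B'_i$ \emph{do} move, each in an at least two-dimensional family ($W_{p-2}(R')$ and $U'_a(R')$ are both $2$-dimensional), and this---not the variation of $S'$ per se---is what sweeps out $(\Sym^2(C_0))^s$; an explicit analysis of line chains and of the linear systems $|\sigma_i+mlF_i|$, etc., gives nodality of $C_0+B'_1+\cdots+B'_s$ off the tacnodes of $B'_s$. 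The $2s$ parameters in $\mathcal X$ are then spent choosing $T$ to contain two intersection points of each selected $B'_i$ with $E'$, which is exactly what makes the total transforms limits of the rigid rational curves in $V^\bullet_p(S')$ and $U_a(S')$ on nearby smooth surfaces; without this step your openness argument for (ii) has no actual limit configuration of curves of the form $C'+Z$ to be open around, and your fallback for (ii) (``the $B_i$ move enough by part (i)'') is circular.
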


\begin{proof}   We specialize $(S,C)\in \V^ *_\delta$ to $(R_0,C_0) \in \mathfrak \W_{\delta}$, set $\Gamma:=\psi_\delta ^ *(S,C)\in \M_{g_{1,\delta}}$ and $\Gamma_0=\psi^ *_\delta(R_0,C_0)\in \overline \M_{g_{1,\delta}}$, and  prove the assertions (i) and (ii) in the limit situation for $\Gamma_0$ and $T(\Gamma_0)$. 

Each $S'\in T(\Gamma_0)$ contains a curve $C'\in V_\delta^ *(S')$ with $\psi^ *_\delta(C')=\Gamma_0$. 
By the proof of   Proposition \ref{prop:unica2},    
all components of $T(\Gamma_0)=\phi_{\delta}^*(\tilde{\psi}_{\delta}^{-1}(\Gamma_0))$ have dimension at most $22-2{g_{1,\delta}}$. By \eqref{eq:fiber'} we conclude that  {\it all} components of $T(\Gamma_0)$ have dimension precisely $22-2g_{1,\delta}$. In particular, this applies to the component $T_0:=\phi^*_{\delta}(V_0)$, cf. \eqref{eq:diagrammonefichissimo}, where $V_0$ is a component of $(\psi^*_{\delta})^{-1}(\Gamma_0)$ satisfying the conditions in Corollary \ref{cor:unica1}. We recall that $V_0$ contains $(R_0,C_0)$ and satisfies $\dim (\pi_{\delta}(V_0)) \leqslant \max\{0,7-g_{1,\delta}\}$ by Corollary \ref{cor:unica1}(b), and $T_0$ lies in the specialization of $T(\Gamma)=\phi_{\delta}^*(\tilde{\psi}_{\delta}^{-1}(\Gamma))$. Therefore, recalling again \eqref{eq:diagrammonefichissimo}, we have  
\[
\dim (\pi_p(T_0)) = \dim (\pi_p(\phi_{\delta}^*(V_0)) )
\leqslant \dim (\pi_{\delta}(V_0)) \leqslant \max\{0,7-{g_{1,\delta}}\} \]
and  
the general fiber of ${\pi_p}_{|T_0}$ has dimension 
\[ f\geqslant \dim (T_0) -\max\{0,7-{g_{1,\delta}}\} \geqslant 22-2{g_{1,\delta}}-\max\{0,7-{g_{1,\delta}}\} =\min\{22-2{g_{1,\delta}}, 15-{g_{1,\delta}}\} \geqslant 2s \]
(remember the assumptions on $s$ and ${g_{1,\delta}}$).
Then, inside $T_0$  we can find a subscheme 
$T'_0$ of dimension at least $2s$ consisting solely of surfaces that are birational modifications of a fixed $R'$ and it is therefore parametrized by a family $\mathcal X$ (of dimension at least $2s$)  of divisors in $|T^ 1_{R'}|$. For $R$ corresponding to the general point of $\mathcal X$, the curves in  $V^\bullet_p(R)$ are limits of rational nodal curves in $V^ \bullet_p(S')$  for general $S' \in T(\Gamma)$, and similarly the curves in $U_a(R)$ are limits of rational nodal curves in $U_a(S')$ (see Proposition \ref {prop:curvedichen}).   We will prove that (i) and (ii) hold by proving that they hold  in the limiting situation on $R$.

 We first claim that for the general element $(C_0, B'_1, \ldots, B'_{s-1}, B'_s) \in W_{\delta}(R') \x (W_{p-2}(R'))^{s-1} \x U'_a(R')$, the curve 
$C_0+B'_1 + \cdots + B'_{s-1} + B'_s$ is nodal off the tacnodes of $B'_s$. 

To prove the claim,  note that for any $0 \leqslant j \leqslant p-1$, the variety $W_{p-j}(R')$ parametrizes a $j$--dimensional family of curves. Its general element contains a line chain, that 
varies in a $1$--dimensional, base point free system, so the lines of these chains are in general different from lines contained in the general member of 
$U'_a(R')$ and do not cause any non-nodal singularities in $C_0+B'_1 + \cdots + B'_{s-1} + B'_s$. Fixing a line chain $\mathfrak C$, the family of curves in $W_{p-j}(R')$  containing it is a linear system $\L_\mathfrak C$ of dimension $j-1$,  whose general element, minus $\mathfrak C$,  is a smooth rational curve on every $R_i'$, so that $\L_\mathfrak C$ is base point free off $\mathfrak C$ whenever $j \geqslant 2$. Hence, the general curves $B'_1, \ldots, B'_{s-1}$ in $W_{p-2}(R')$ intersect both $C_0$ and $B'_s$ transversely, and if $\delta <p-1$, then also the general $C_0$ in $W_{\delta}(R')$ intersects $B'_s$ transversely. It remains to prove that the latter holds also when $\delta=p-1$, that is, when $g_{1,\delta}=1$. As we saw, non-transversal intersections can only occur off the line chain of $C_0$, that is, on the two components $\gamma_i \subset R'_i$, $i=1,2$,
of $C_0$.  Denote by $C^i_j$, $i=1,2$, $1 \leqslant j \leqslant m$ the components of $B'_s$ as in Definition \ref{def:curvedichen}.

If $p=2l+1$ is odd, then  $R'_i \cong \PP^1 \x \PP^1$ and $\gamma_i \sim C^i_j \sim \sigma_i$, for $i=1,2$ and $1 \leqslant j \leqslant m-1$. 
Hence $\gamma_i$ only intersects the component $C^i_m \in |\sigma_i+mlF_i|$ of $B'_s$. As we vary the curves $B'_s$ with fixed tacnode $Q= C^1_m \cap C^2_m$ (see Definition \ref{def:curvedichen} and Figure \ref{fig:dischen}), each component $C^i_m$ varies freely in the linear system $|(\sigma_i+mlF_i) \* \I_Z|$, where $Z$ is the length--$(ml)$ subscheme of $E'$ supported at $Q$. This linear system is base point free off $Q$. It follows that each $\gamma_i$ intersects the general $C^i_m$ transversely, whence $C_0$ intersects the general $B'_s$ transversely. 

If $p=2l$ is even, then $R'_i \cong \FF_1$, $\gamma_1 \sim\sigma_1+F_1$, $\gamma_2 \sim \sigma_2$ and $C^i_j \sim \sigma_i+F_i$, for $i=1,2$ and $1 \leqslant j \leqslant m-1$. Hence, $\gamma_1 \cdot C^1_j=1$ and
 $\gamma_2 \cdot C^2_j=0$ for $1 \leqslant j \leqslant m-1$ and we again only need to check that $\gamma_i$ intersects $C^i_m \in |\sigma_i+(ml-m+1)F_i|$ transversely, which can be proved exactly as in the case $p$ odd.

This finishes the proof of the claim. 

Since  the curves $B'_i$ move in an at least two-dimensional   family,  
for $1\leqslant i\leqslant s$,  the curves  $Z_0':= B'_1+ \cdots + B'_{s-1}+ B'_s$  cuts out on $C_0$ an algebraic system $\mathfrak G$ and the associated variety $\mathfrak I'$ equals $(\Sym^2(C_0))^s$. 
Hence, for the general $s$--tuple of pairs of points of $C_0$, we may find a $(B'_1, \ldots, B'_{s-1}, B'_s) \in (W_{p-2}(R'))^{s-1} \x U'_a(R')$ such that $Z'_0$ contains the given points on $C_0$ and is 
nodal off the tacnodes of $B'_s$. 
Each  $B'_i$ in $Z'_0$ cuts $E'$ in  $p+1$ points for $1\leqslant i\leqslant s-1$ and in at least 3 points off the tacnodes if $i=s$. Since $\dim(\mathcal X)\geqslant 2s$, we can find a surface $R$ corresponding to a point of $\mathcal X$ for which the modification $R\to R'$ involves two of the intersection points of each $B'_i$ with $E'$ for $1\leqslant i\leqslant s$ (off the tacnodes if $i=s$). We denote by $Z_0$ the total transform of $Z'_0$ on $R$.
Then  $R \in T_0$, and we denote by $B_{0,i}$ the total transform of each $B'_i$ on $R$, for $1\leqslant i\leqslant s$. Then such a curve is the limit of a nodal rational curve in $V^ \bullet_p(S')$ for $1\leqslant i\leqslant s-1$, and $B_{0,s}$ is the limit of a nodal rational curve in $U_a(S')$. 

 Thus, we have proved that  (i) and (ii) hold in the limiting situation for $R$, $C_0$ and $Z_0$. Hence,  they also hold in general.
\end{proof}

Let $s \leqslant 4$ and $a$ be positive integers. Then for  $[S,L]\in \K_p$  general we can consider the locally closed subset $V_{(\delta,s,a)}(S)$ in $|(s+a)L|$ consisting of all nodal curves of the form $C+Z$, with $(S, C)\in V_\delta^ *(S)$ and $Z$ as in \eqref {eq:z}. Note that $V_{(\delta,s,a)}(S)$ is non--empty for $[S,L]$ general by Lemma \ref {lemma:fibra} and $\dim(V_{(\delta,s,a)}(S))={g_{1,\delta}}$. In the same way, we have the universal family $\V_{(\delta, s,a)} \to \K_p$, with fiber $V_{(\delta,s,a)}(S)$ over $S$.

 Marking all the nodes of a curve $C+Z$ but two in the divisor $\mathfrak g_i$ cut out by $B_i$ on $C$, for  $1\leqslant i\leqslant s$,  one checks that the normalizations are $2$-connected, hence  these curves lie in $V_{m,\zeta}(S)$, where $m=s+a$ and $\zeta=p(m)-{g_{1,\delta}}-s$, with stable models in $\overline{\M}_{{g_{1,\delta}}+s}$ as shown in Figure \ref{fig:dis2} below.

\begin{figure}[ht] 
\[\
\includegraphics[width=8cm]{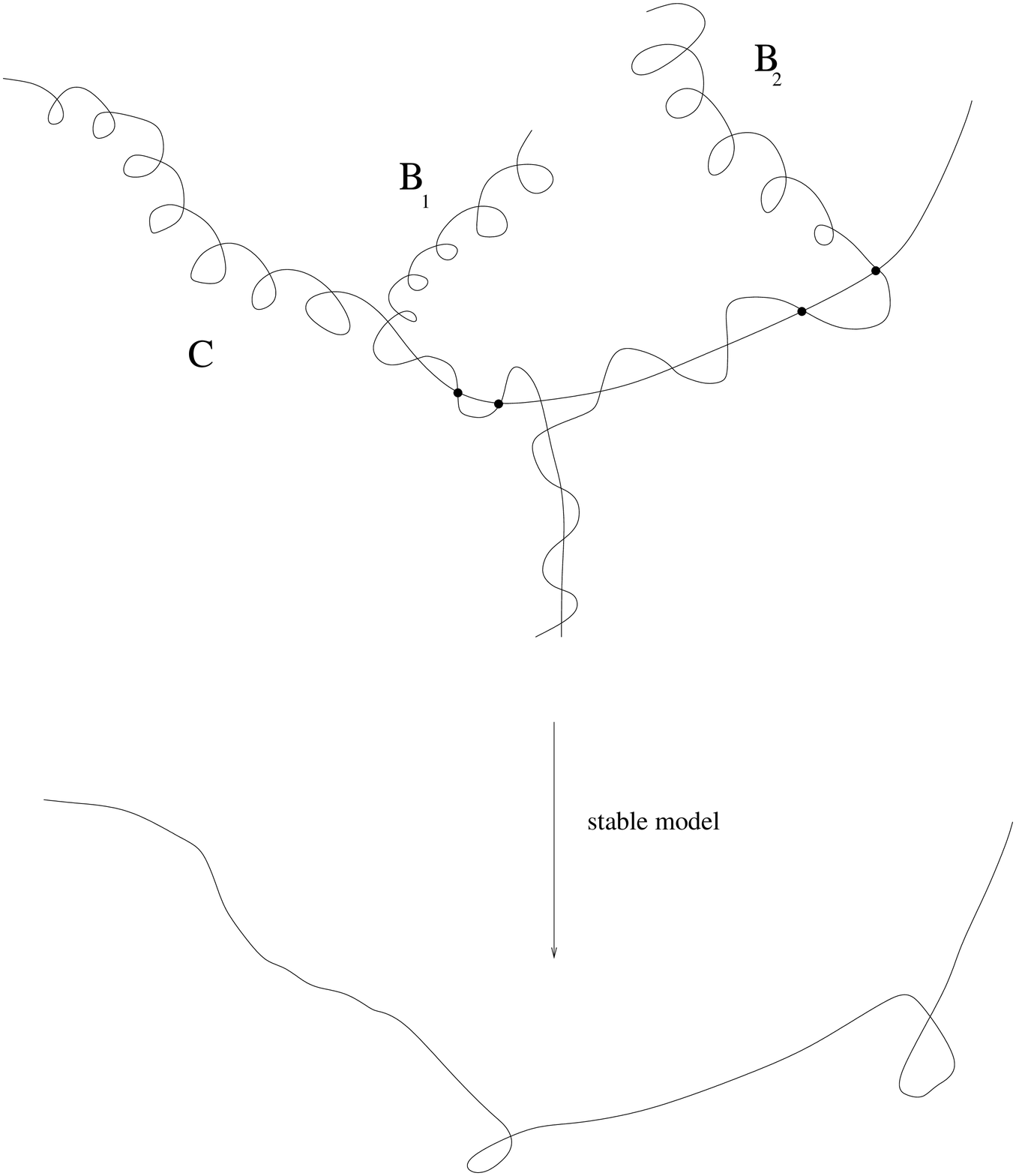}  
\]
\caption{Members of $V_{(\delta, s,a)}(S)$, $s=2$, with marked nodes the ones without dots, and the stable model.}
\label{fig:dis2}
\end{figure}

In this way we determine (at least) an irreducible, fully complete component $\V^*_{m, \zeta}$ of 
 $\overline \V_{m,\zeta}$  with its moduli map
\[
\psi^ *_{m,\zeta}: \V^*_{m, \zeta} \to \overline \M_{{g_{1,\delta}}+s}.
\]
(Note that $\V^*_{1,\delta}=\V^*_{\delta}$.)

\begin{proposition} \label{prop:dom} Let $p\geqslant 3$. 
Let $g$ and $s$ be positive integers with  $1  \leqslant g \leqslant 7$,  $g \leqslant p$ and $s\leqslant 4$. Set $\zeta=p(m)-g-s$. Then, for any integer $m \geqslant s+1$, the map $\psi^ *_{m,\zeta}$ is dominant.
\end{proposition}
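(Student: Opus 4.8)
The plan is to deduce dominance of $\psi^*_{m,\zeta}: \V^*_{m,\zeta}\to \overline\M_{g+s}$ from the dimension estimates already established for $m=1$, together with the freedom gained by attaching the curves $B_i$ in $V^\bullet_p(S)$ and $U_a(S)$. The target moduli space $\overline\M_{g+s}$ has dimension $3(g+s)-3$, and since $g\leqslant 7$ and $s\leqslant 4$ we have $g+s\leqslant 11$, so dominance is the expected behaviour. First I would compute the dimension of $\V^*_{m,\zeta}$: it is $19+g_{m,\zeta}=19+p(m)-\zeta=19+g+s$. So the claim is equivalent to saying that the general fiber of $\psi^*_{m,\zeta}$ has dimension exactly $19+g+s-(3(g+s)-3)=22-2g-2s$ (when this is nonnegative; note $2g+2s\leqslant 22$), i.e. the moduli map has the expected fiber dimension. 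As in the proof of Proposition~\ref{prop:unica2}, I would verify this by semicontinuity, degenerating to a curve $C_0+Z_0$ on a reducible surface $R\in\mathfrak S_p$ of the kind produced in the proof of Lemma~\ref{lemma:fibra}, and bounding from above the dimension of the fiber of $\psi^*_{m,\zeta}$ through such a point.

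The key geometric point is the decomposition of the deformation space of the stable model. A curve $C+Z$ with $Z=B_1+\dots+B_{s-1}+B_s$ has stable model (Figure~\ref{fig:dis2}) obtained by normalizing at all nodes except two on each $\mathfrak g_i=B_i\cap C$; so the stable model is the normalization $\widetilde C$ of $C$ (of genus $g=g_{1,\delta}$) with $s$ rational tails attached at the $s$ pairs of points, one pair per $B_i$. Deforming this stable curve amounts to deforming $\widetilde C$ in $\M_g$ — which by Proposition~\ref{prop:unica2}(b) and the Stein factorization diagram is already understood, the fiber $\widetilde\psi_\delta^{-1}(\Gamma)$ being irreducible of dimension $22-2g$ with $T(\Gamma)=\phi^*_\delta(\widetilde\psi_\delta^{-1}(\Gamma))$ of the same dimension by Proposition~\ref{lemma:solok3} — together with moving the $2s$ attaching points on $\widetilde C$. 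By Lemma~\ref{lemma:fibra}(i), for the general $S'\in T(\Gamma)$ and the general $s$-tuple of pairs of points on $\Gamma$, there is a curve $Z$ of the form \eqref{eq:z} realizing those points, and by (ii) the resulting $C'+Z$ is nodal. The map sending $(S',C',Z)\mapsto$ (stable model of $C'+Z$ with the prescribed marked nodes) therefore has image of dimension $\dim T(\Gamma)$ (choice of $S'$, which is what varies $\widetilde C$ in moduli of genus $g$, contributing $22-2g$ to the moduli direction after accounting for the $T(\Gamma)$-fiber being the whole $\psi^*_\delta$-fiber) plus $2s$ (the attaching points), and we want to see this accounts for all of $\overline\M_{g+s}$ up to the expected fiber dimension.

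More precisely, I would run the count as follows. Fix a general $\Gamma\in \widetilde\M_g$ in the image of $\widetilde\psi_\delta$; attaching $s$ rational tails at $s$ pairs of points determines a point of $\overline\M_{g+s}$, and as $\Gamma$ varies over its $(3g-3)$-dimensional space (the image $\widetilde\psi_\delta(\V^*_\delta)$ is dense since $\psi^*_\delta$ is dominant for $g\leqslant 7$ by Proposition~\ref{prop:unica2}(b)) and the $2s$ points vary freely, we sweep out a $(3g-3+2s)$-dimensional family. Since $3g-3+2s\geqslant 3(g+s)-3$ would need $s\le 0$, this is \emph{not} immediately all of $\overline\M_{g+s}$ — rather, one must observe that the $s$ rational tails also carry a moduli of the attaching point \emph{on the tail}, but a three-pointed $\PP^1$ is rigid, so each tail attached at two points on $\widetilde C$ and... actually the point is that $\overline\M_{g+s}$ near such a curve with $s$ rational tails, each meeting the rest in $2$ points, has the genus-$g$ part ($3g-3$ moduli) plus $s$ tails each contributing: a $\PP^1$ with $2$ marked points has $0$ moduli, but the two \emph{attaching points on $\widetilde C$} give $2s$, and the total is $3g-3+2s$; meanwhile $\dim\overline\M_{g+s}=3(g+s)-3=3g-3+3s$. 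So there is a deficiency of $s$ — which must be made up by allowing each rational tail to \emph{degenerate into a chain}, or equivalently by using that $B_s\in U_a(S)$ is itself rational nodal and contributes further nodes/tails. Concretely, the correct picture (Figure~\ref{fig:dis2}) is that the stable model has each $B_i$ contributing a rational component meeting $\widetilde C$ in $2$ points \emph{and} meeting the other tails, so the dual graph has first Betti number $s$; I would recount using that $\widetilde C$ has genus $g$ and the tree/loop structure of tails contributes the remaining $3s$ via the $2s$ attaching points on $\widetilde C$ plus $s$ from the genus-drop loops. The main obstacle, and the step requiring genuine care, is exactly this bookkeeping: showing that the map
\[
(S', \text{pairs of points}) \longmapsto \text{stable model}
\]
is dominant onto $\overline\M_{g+s}$, i.e. that the $22-2g$ moduli from varying $\Gamma$ (via $T(\Gamma)$, of dimension $22-2g$) combined with the $2s$ attaching points fill up the $3(g+s)-3$ dimensions — equivalently that the differential of $\psi^*_{m,\zeta}$ is surjective at the general point of $\V^*_{m,\zeta}$. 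I would establish this by showing the fiber of $\psi^*_{m,\zeta}$ through $(S',C'+Z)$ has dimension at most $22-2g-2s$: a deformation of $C'+Z$ preserving the stable model must (i) keep $\widetilde C$ in a fixed point of $\M_g$, hence move $(S',C')$ in a subvariety of $T(\Gamma)$ of dimension $22-2g-(3g-3)$... no — rather must move $(S',C')$ within $\widetilde\psi_\delta^{-1}(\Gamma)$, dimension $22-2g$, minus the constraint that the $2s$ glued points stay glued in the right way, giving $22-2g$; but simultaneously the stable model records the $2s$ points up to nothing, so the genuinely free directions in the fiber are the $19$ minus ... I would organize this as: $\dim(\text{fiber}) = \dim\V^*_{m,\zeta} - \dim(\text{image}) = (19+g+s) - \dim(\text{image})$, and prove $\dim(\text{image})\ge 3(g+s)-3$ by exhibiting, in the degenerate situation of Lemma~\ref{lemma:fibra}, that varying $R\in\mathfrak S_p$ within $T_0$ (dimension $22-2g$, modification divisor varying in $|T^1_{R'}|$) together with the freely-moving pairs of points on $C_0$ (Lemma~\ref{lemma:fibra}(i) gives all of $(\Sym^2 C_0)^s$, i.e. $2s$ further free parameters, compatible with the constraint $f\ge 2s$ on the fiber dimension of ${\pi_p}_{|T_0}$ used there) sweeps out the needed dimension in $\overline\M_{g+s}$; then conclude by semicontinuity as in Proposition~\ref{prop:unica2}. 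The only real work is checking this inequality is an equality, which follows because $22-2g-2s \geqslant 0$ exactly when $g+s\leqslant 11$, our hypothesis.
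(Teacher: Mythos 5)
Your proposal contains the right ingredients (the special curves $C+Z$ from Lemma \ref{lemma:fibra}, the identification of their stable models, a final appeal to semicontinuity), and you correctly detect the central difficulty: the stable models of the curves $C'+Z$ are irreducible $s$-nodal curves of geometric genus $g$, so they sweep out only the locus $\Delta_{s,g+s}$, of dimension $3g-3+2s=3(g+s)-3-s$, leaving a ``deficiency of $s$''. But your attempts to close that gap do not work. The suggestion that the missing $s$ directions come from ``varying $S'$ in $T(\Gamma)$'' is wrong: by definition $T(\Gamma)=\phi^*_{\delta}(\widetilde\psi_{\delta}^{-1}(\Gamma))$ parametrizes surfaces carrying a curve with \emph{fixed} stable model $\Gamma$, so its $22-2g$ dimensions lie entirely in the fiber of the moduli map, not in the image; and the musings about tails degenerating into chains or ``genus-drop loops'' do not produce an argument. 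The point is that one should \emph{not} try to show that the special curves themselves fill up a dense subset of $\overline{\M}_{g+s}$ --- they don't.

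The missing idea is the following claim, which is the heart of the paper's proof: every component of $\V_{(\delta,s,m-s)}$ is an \emph{entire} component of $(\psi^*_{m,\zeta})^{-1}(\Delta_{s,g+s})$. This is proved by observing that any strictly larger family inside that preimage would have general member $C^*$ of geometric genus $g$ (its partial normalization consists of a genus-$(g+s')$ component with $s'\leqslant s$ nodes plus $s-s'$ rational curves), hence would have relative dimension $g$ over $\K_p$ --- the same as $\V_{(\delta,s,m-s)}$ --- a contradiction. Granting this, the fiber of $\psi^*_{m,\zeta}$ over a general point of $\Delta_{s,g+s}$ has dimension exactly $\dim(\V_{(\delta,s,m-s)})-\dim(\Delta_{s,g+s})=(19+g)-(3(g+s)-3-s)=22-2(g+s)$; upper semicontinuity of fiber dimension then bounds the \emph{general} fiber of $\psi^*_{m,\zeta}$ by $22-2(g+s)$, whence $\dim(\operatorname{Im}(\psi^*_{m,\zeta}))\geqslant (19+g+s)-(22-2(g+s))=3(g+s)-3$. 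Without the claim, semicontinuity only tells you that the general fiber is no larger than the fiber over $\Delta_{s,g+s}$, which a priori could exceed $22-2(g+s)$ if the preimage of $\Delta_{s,g+s}$ had extra components through the special curves. Your closing sentence --- that the needed equality ``follows because $22-2g-2s\geqslant 0$'' --- is a non sequitur: nonnegativity of the expected fiber dimension says nothing about the actual one.
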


\begin{proof} We keep the notation introduced above.  Take a general $\Gamma\in \M_g$ and let $(x_{i},y_{i})$, for $1\leqslant i \leqslant s$, be general pairs of points on $\Gamma$. By  Proposition \ref{prop:unica2}  and Lemma \ref{lemma:fibra} we may construct nodal curves 
\[\overline C=C + B_{1} + \cdots + B_{s} \in V_{(\delta,s,m-s)}(S), \,\,\, \text{with}\,\,\,  x_{i}+ y_{i} \leqslant \mathfrak g_i, \,\,\, \text{for}\,\,\, 1\leqslant i\leqslant s, \]
where  $\delta=p-g$,  $(S,C) \in \V^*_{1,\delta}=\V^*_{\delta}$  and  $\Gamma = \psi^*_{\delta}(C)$.  
The stable model $\overline \Gamma$  of the curve obtained by normalizing $\overline C$ at all the marked nodes, is the curve obtained by pairwise identifying the inverse images of the points $x_{i},y_{i}$ on $\Gamma$, for $1\leqslant i\leqslant s$. This is, by construction, a general member of the $s$--codimensional locus $\Delta_{s,g+s}$ of irreducible $s$--nodal curves in $\overline{\M}_{g+s}$, which therefore sits in the closure of the image of $\psi^ *_{m,\zeta}$. 

We claim that any component of $\V_{(\delta,s,m-s)}$ is a whole component of
$({\psi}^ *_{m,\zeta})^{-1}(\Delta_{s,g+s})$. Indeed, if it were not, then the general member of the component containing it would be a curve $C^ *$ specializing to $\overline C$, which means that the normalization of $C^ *$ at its marked nodes would consist of a component of arithmetic genus 
$g+s'$ with $s' \leqslant s$ nodes (giving rise, in the specialization, to $s'$ of the rational curves $B_i$ in $\overline C$), plus $s-s'$ rational curves (tending to the remaining curves $B_i$ in $\overline C$). In any event, $C^ *$ has geometric genus $g$. So the relative dimension over $\K_p$ of such a family is $g$, equal to the relative dimension of $\V_{(\delta,s,m-s)}$, a contradiction.

It follows that the fiber dimension over $\Delta_{s,g+s}$ is
\[ \dim (\V_{(\delta,s,m-s)}) - \dim (\Delta_{s,g+s}) =(g+19)-(3(g+s)-3-s)= 22-2(g+s),\]
hence the general fiber of $\psi^ *_{m,\zeta}$ has at most this dimension, so
\[\dim({\rm Im}(\psi^ *_{m,\zeta}))\geqslant  [g+s+19]-[22-2(g+s)]=3(g+s)-3\]
proving the assertion.
\end{proof}

\begin{cor} \label{cor:tutteledominanze}
  Assume $p \geqslant 3$.  With $g:=p(m)-\delta$, the map $\psi^ *_{m,\delta}$ is dominant if:\\
  \begin{inparaenum}
  \item [$\bullet$] $m=2$,  $2 \leqslant g \leqslant 8$  and $p \geqslant g-1$;\\
  \item [$\bullet$] $m=3$,  $ 2 \leqslant g \leqslant 9$  and $p \geqslant g-2$;\\
   \item [$\bullet$] $m=4$,  $ 2 \leqslant g \leqslant 10$  and $p \geqslant g-3$;\\
   \item [$\bullet$]$m \geqslant 5$,  $2 \leqslant g \leqslant 11$  and  $p \geqslant g-4$.
  \end{inparaenum}
\end{cor}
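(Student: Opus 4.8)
The plan is to obtain Corollary \ref{cor:tutteledominanze} as a direct bookkeeping consequence of Proposition \ref{prop:dom}. In each of the four cases one needs only to split the target genus $g$ as $g = h + s$, where $h$ plays the role of $g_{1,\delta}$ in Proposition \ref{prop:dom} and $s$ is the number of extra rational components attached. Recall that Proposition \ref{prop:dom} provides, for all integers $1 \leqslant h \leqslant 7$ and $1 \leqslant s \leqslant 4$ with $h \leqslant p$, and for every $m \geqslant s+1$, a dominant moduli map $\psi^*_{m,\zeta} : \V^*_{m,\zeta} \to \overline{\M}_{h+s}$, with $\zeta = p(m) - h - s$ (and $\delta = p - h$). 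So, given $(m,g,p)$ as in the statement, it suffices to produce an integer $s \in \{1,2,3,4\}$ with $s \leqslant m-1$, with $1 \leqslant g - s \leqslant 7$, and with $g - s \leqslant p$; then Proposition \ref{prop:dom} applied with $h := g - s$ yields dominance of $\psi^*_{m,\zeta}$ for $\zeta = p(m) - h - s = p(m) - g$, which coincides with $\delta$ since $g = p(m) - \delta$, and this is exactly the assertion.

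First I would set, uniformly in all four cases,
\[ s := \min\{\, m-1,\ 4,\ g-1 \,\}, \qquad h := g - s. \]
Since $g \geqslant 2$ and $m \geqslant 2$, we get $1 \leqslant s \leqslant 4$ and $s \leqslant m-1$, hence $m \geqslant s+1$; moreover $h = g - s \geqslant 1$ because $s \leqslant g-1$. For the bound $h \leqslant 7$, note that either $s = g-1$, in which case $h = 1$, or else $s = \min\{m-1,4\}$, in which case $h = g - \min\{m-1,4\}$; this is at most $7$ precisely because the upper bounds on $g$ in the four cases are $8 = 7+1$, $9 = 7+2$, $10 = 7+3$ and $11 = 7+4$, matched respectively to $m = 2$, $m = 3$, $m = 4$ and $m \geqslant 5$. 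Finally, the condition $h \leqslant p$ is automatic when $s = g-1$ (then $h = 1 \leqslant 3 \leqslant p$), and in the remaining subcases it reads $p \geqslant g - \min\{m-1,4\}$, i.e. $p \geqslant g-1$, $p \geqslant g-2$, $p \geqslant g-3$, $p \geqslant g-4$ in the four cases, which is exactly what is assumed.

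With these choices all the hypotheses of Proposition \ref{prop:dom} are satisfied, and the corollary follows at once; in each case the resulting component $\V^*_{m,\zeta}$ has $\zeta = \delta$. There is no genuine obstacle in this argument: the only point requiring (minimal) care is to verify that the four numerical ranges for $g$ and $p$ are exactly those for which the constraints $1 \leqslant s \leqslant \min\{m-1,4\}$, $\max\{1,\,g-7\} \leqslant s \leqslant g-1$ and $p \geqslant g - s$ are simultaneously solvable in the integer $s$ --- which is what the computation above checks.
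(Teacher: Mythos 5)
Your argument is correct and is essentially the paper's own proof: the corollary is deduced purely as bookkeeping from Proposition \ref{prop:dom} by choosing, for each $(m,g,p)$ in the stated ranges, an integer $s$ with $1\leqslant s\leqslant \min\{m-1,4\}$, $\max\{1,g-7\}\leqslant s\leqslant g-1$ and $p\geqslant g-s$ (the paper phrases this as: for each $1\leqslant s\leqslant 4$, Proposition \ref{prop:dom} covers $m\geqslant s+1$, $1+s\leqslant g\leqslant 7+s$, $p\geqslant g-s$). Your explicit uniform choice $s=\min\{m-1,4,g-1\}$ verifies exactly these constraints, so the two proofs coincide in substance.
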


\begin{proof}
 For any $1 \leqslant s \leqslant 4$, Proposition \ref{prop:dom} ensures dominance of  $\psi^ *_{m,\delta}$ if $m \geqslant s+1$, $1+s \leqslant g \leqslant 7+s$ and $p \geqslant g-s$, proving the result.
\end{proof}

  Together with Remark \ref{rem:generibassi}, and the fact that the assertion is trivial for $g=0$, this finishes the proof of  Theorem \ref{thm:main}(A) for $m \geqslant 2$.

\section{Generic finiteness of the moduli map for $m \geqslant 2$}\label{sec:genfin}
In this section we will use Proposition \ref{prop:unica2} and Corollary \ref{cor:tutteledominanze} to prove generic finiteness of $\psi^ *_{m,\delta}$ for $m>1$ as in Theorem \ref {thm:main}.

The following lemma is trivial and the proof can be left to the reader:

\begin{lemma} \label{lemma:finitezzasu'}
  If there exists a component $\V \subseteq \V_{m,\delta}$ such that
${\psi_{m,\delta}}_{|\V}$ is generically finite onto its image, then for each component $\W \subseteq \V_{m,\delta-1}$ such that $\V$ is included in $\W$ (see Remark \ref {rem:filtr}), the map ${\psi_{m,\delta-1}}_{|\W}$ is generically finite onto its image. \end{lemma}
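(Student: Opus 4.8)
\medskip

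\noindent\emph{Sketch of the intended argument.}
Write $g:=g_{m,\delta-1}$, so that $g_{m,\delta}=g-1$. Since $\V$ is included in $\W$, Remark \ref{rem:filtr} provides a component $\V'$ of $\V_{[m,\delta-1,\delta]}$ with $\V'\subseteq\W$, $\dim\V'=19+g_{m,\delta}=\dim\W-1$, whose general point parametrizes an irreducible curve $C$ with $\delta$ nodes, $\delta-1$ of them marked, and such that $C$ with all $\delta$ nodes marked lies in $\V$. Marking that last node defines a generically finite dominant rational map $\rho\colon\V'\dashrightarrow\V$, $C\mapsto C^{\sharp}$ (a relative version of the correspondence $X_{m,\delta-1,\delta}$ of \S\ref{S:setup}; dominance onto $\V$ comes from the fact that the projection to $\V_{m,\delta}$ is \'etale). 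The observation that drives the proof is that, for general $C\in\V'$, the partial normalization at the $\delta-1$ marked nodes leaves exactly one, unmarked, node, so that $\psi_{m,\delta-1}(C)$ is an irreducible one-nodal stable curve whose normalization is precisely $\psi_{m,\delta}(C^{\sharp})$; equivalently, $\psi_{m,\delta}\circ\rho$ is obtained from $\psi_{m,\delta-1}|_{\V'}$ by normalizing at the remaining node.

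\medskip

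\noindent First I would show that $\psi_{m,\delta-1}|_{\V'}$ is generically finite onto its image. Arguing by contradiction, suppose that over a general point $\Gamma_0$ of $\psi_{m,\delta-1}(\V')$ the fibre has positive dimension, and let $\Gamma_0^{\nu}$ denote the normalization of $\Gamma_0$. By the observation above, every $C$ in that fibre satisfies $\psi_{m,\delta}(C^{\sharp})=\Gamma_0^{\nu}$, so the fibre is carried through $\rho$ into $(\psi_{m,\delta})^{-1}(\Gamma_0^{\nu})\cap\V$, which therefore has positive dimension (as $\rho$ is generically finite). But $\rho$ is dominant and $\psi_{m,\delta}\circ\rho=(\text{normalization})\circ\psi_{m,\delta-1}|_{\V'}$, so $\Gamma_0\mapsto\Gamma_0^{\nu}$ sends $\psi_{m,\delta-1}(\V')$ onto a dense subset of $\psi_{m,\delta}(\V)$; hence for $\Gamma_0$ general, $\Gamma_0^{\nu}$ lies outside the proper closed locus in $\psi_{m,\delta}(\V)$ over which $\psi_{m,\delta}|_{\V}$ has positive-dimensional fibres, contradicting the hypothesis. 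Therefore $\dim\psi_{m,\delta-1}(\V')=\dim\V'=\dim\W-1$.

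\medskip

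\noindent Then I would conclude by a dimension count in $\overline{\M}_g$. Set $Y:=\overline{\psi_{m,\delta-1}(\W)}$, an irreducible variety; since the general point of $\W$ parametrizes a curve whose normalization at all its $\delta-1$ marked nodes is smooth, $Y$ is not contained in the boundary $\overline{\M}_g\setminus\M_g$. On the other hand $\psi_{m,\delta-1}(\V')$, whose general element is a one-nodal curve, is contained in $Y\cap(\overline{\M}_g\setminus\M_g)$, a proper closed subvariety of the irreducible $Y$, hence of dimension $\leqslant\dim Y-1$. Combining with the previous step, $\dim\W-1\leqslant\dim Y-1$, that is $\dim\psi_{m,\delta-1}(\W)=\dim Y\geqslant\dim\W$; as the opposite inequality is trivial, $\psi_{m,\delta-1}|_{\W}$ is generically finite onto its image.

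\medskip

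\noindent The main (and essentially the only) point that needs care is the compatibility used in the first step: that for a general $C\in\V'$ the stable model of the partial normalization at the $\delta-1$ marked nodes really is a one-nodal irreducible curve, and that normalizing it at that node returns $\psi_{m,\delta}(C^{\sharp})$ --- equivalently, that passing to stable models commutes with normalizing at the remaining node. For general $C$ no unstable components arise and no nodes get contracted, so I do not expect any real difficulty here; everything else is pure dimension counting, using that $\V'$ has codimension one in $\W$ and is mapped into the boundary of moduli space.
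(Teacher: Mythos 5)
Your proof is correct. The paper in fact gives no argument at all here --- it declares the lemma ``trivial and the proof can be left to the reader'' --- and what you write is exactly the intended reasoning: the hypothesis that $\V$ is included in $\W$ produces the codimension-one locus $\V'\subseteq\W$ on which $\psi_{m,\delta-1}$ is still generically finite (via the correspondence with $\V$ and the compatibility of normalization with stable models), and since $\psi_{m,\delta-1}(\V')$ lands in the boundary $\Delta_{g,1}$ while $\psi_{m,\delta-1}(\W)$ does not, one gets $\dim\psi_{m,\delta-1}(\W)\geqslant\dim\V'+1=\dim\W$.
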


\begin{lemma} \label{lemma:baseperfinitezza}
  Assume that $\gamma$ and $\mu$ are integers such that $\psi_{\mu,p(\mu)-\gamma}$ is generically finite onto its image on a component $\V'$ of $\V_{\mu,p(\mu)-\gamma}$. Let $a \geqslant 1$ and $b \geqslant 0$ be integers such that  there is a component  $\V''$ of $\V_{a,p(a)-b}$ satisfying: \\
\begin{inparaenum}[(i)] 
\item  for general $(S,C)\in \V'$ and $(S,D)\in \V''$, $D$ intersects $C$ transversally; \\
\item for general $[S] \in \K_p$, the restriction 
${\psi_{a,p(a)-b}}_{|V''(S)}$ is generically finite, where $V''(S)$ denotes the fiber of $(\phi_{a,p(a)-b})_{|\V''}: \V'' \to \K_p$ over $[S]$.
  \end{inparaenum}\\
  Then for $m=\mu+a$ and $\delta=p(\mu+a)-(\gamma+b+1)$, the map
  $\psi_{m,\delta}$ is generically finite onto its image on some component $\V$ of $\V_{m,\delta}$. 
\end{lemma}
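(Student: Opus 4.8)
The idea is to build curves in $\V_{m,\delta}$ by glueing a general curve of the component $\V'$ to a general curve of $\V''$ on the same $K3$ surface, and to control the fiber of the moduli map by a dimension count analogous to the one used in the proof of Proposition \ref{prop:dom}. First I would observe that, for general $[S,L] \in \K_p$, we may pick $(S,C) \in \V'$ and $(S,D) \in \V''$ with $C \in |\mu L|$, $D \in |aL|$ of geometric genera $\gamma$ and $b$ respectively, such that, by hypothesis (i), $C$ and $D$ meet transversally in $C \cdot D = \mu a (2p-2)$ points. The reducible nodal curve $C \cup D \in |mL|$ then has $\delta':=p(\mu)+p(a)-\gamma-b+\mu a(2p-2)-1 = p(m)-\gamma-b-1$ nodes away from any disconnecting node once we mark all the nodes of $C$ and $D$ coming from their own singularities together with all but one of the $C\cap D$ points; one checks that the partial normalization at these $\delta = p(m)-(\gamma+b+1)$ marked nodes is $2$-connected (it is, since $C$ and $D$ are each irreducible and they still meet in at least one unmarked point), so $C\cup D \in V_{|mL|,\delta}(S)$ and this defines a component $\V$ of $\V_{m,\delta}$ whose general member is, by Tannenbaum's result quoted in \S\ref{S:setup}, an irreducible $\delta$-nodal curve.

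Next I would set up the moduli-theoretic bookkeeping. Normalizing $C\cup D$ at the $\delta$ marked nodes produces a stable curve consisting of the normalization $\widetilde C$ of $C$ and the normalization $\widetilde D$ of $D$, glued at the single unmarked point of $C\cap D$; this lies in the boundary divisor $\Delta_{irr}$-free locus corresponding to curves of the shape $\widetilde C \cup_{1\,\mathrm{pt}} \widetilde D$ inside $\overline{\M}_{g}$ with $g=\gamma+b$. I would then count: the relative dimension of $\V$ over $\K_p$ is $g_{m,\delta}=p(m)-\delta = \gamma+b+1$; the locus in $\overline{\M}_{\gamma+b+1}$ of curves of this combinatorial type is $\M_\gamma \times \M_b$ glued along one pair of points, of dimension $(3\gamma-3+1)+(3b-3+1) = 3(\gamma+b)-4$ when $\gamma,b\geqslant 1$ (the endpoint cases $\gamma=0$ or $b=0$ are handled by the usual adjustments, contracting a rational component). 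Putting these together, $\dim \V = \dim \K_p + \gamma+b+1 = 19+\gamma+b+1$, and the image has dimension at most $19 + 3(\gamma+b)-4$ plus the genericity of the glueing point, so one needs the fibers of $\psi_{m,\delta}|_\V$ to be finite.

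To get finiteness of the general fiber I would argue as follows. Fix a general stable curve $\Gamma = \Gamma' \cup \Gamma''$ of the above type with $\Gamma' \in \M_\gamma$, $\Gamma''\in \M_b$. A point of $\psi_{m,\delta}|_\V^{-1}(\Gamma)$ is a $K3$ surface $S$ together with maps $\Gamma' \to S$ and $\Gamma'' \to S$ realizing $\Gamma'$ (resp. $\Gamma''$) as a curve in a component of $\V'(S)$ (resp. $\V''(S)$), with matching images at the glueing node. Since $\psi_{\mu,p(\mu)-\gamma}|_{\V'}$ is generically finite, the $K3$ $S$ together with the curve $C$ varies in a family of dimension $\leqslant \dim\K_p + \max\{0, 22-2\gamma\}$ — but more precisely the fiber of $\psi_{\mu,p(\mu)-\gamma}|_{\V'}$ over $\Gamma'$ is finite; for each such $(S,C)$, hypothesis (ii) gives that the curves $D$ in $V''(S)$ with $\psi_{a,p(a)-b}(D) = \Gamma''$ form a finite set; and the finitely many remaining choices (which node of $C\cup D$ is left unmarked, i.e.\ which intersection point is the glueing point) are discrete. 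Hence the fiber is finite. I would then conclude generic finiteness of $\psi_{m,\delta}|_\V$, and note for completeness that full-completeness of $\V$ (hence applicability to the filtration and to Proposition \ref{lemma:solok3}, if needed) follows because both $\V'$ and $\V''$ can be taken fully complete — or more simply one just invokes the finiteness directly.

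\textbf{Main obstacle.} The delicate point is verifying that the component $\V$ of $\V_{m,\delta}$ constructed by smoothing/deforming $C\cup D$ actually has general member with exactly $\delta$ nodes and that its normalization-at-marked-nodes maps \emph{dominantly} onto the expected boundary locus in $\overline{\M}_{\gamma+b+1}$ — i.e.\ that the $C\cap D$ intersection point one keeps unmarked can be made to move freely, so that the glueing point on $\Gamma'$ (and on $\Gamma''$) is general. This is exactly the role played by hypothesis (i) (transversality guarantees honest nodes and enough intersection points to vary) together with the dominance statements from the earlier sections; making this precise requires a short argument, paralleling the ``$\mathfrak I = (\Sym^2(\Gamma))^s$'' type of claim in Lemma \ref{lemma:fibra}, that the intersection divisor $C\cdot D$ moves in a system of positive dimension on both $C$ and $D$ as $(S,C,D)$ varies. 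Everything else is dimension counting and the two hypotheses plugged in mechanically.
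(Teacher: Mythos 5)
There is a genuine problem, and it starts with the node count. The curve $C+D$ has $(p(\mu)-\gamma)+(p(a)-b)+\mu a(2p-2)=p(m)+1-\gamma-b$ nodes in total (since $p(m)=p(\mu)+p(a)-1+\mu a(2p-2)$), so marking all but \emph{one} point of $C\cap D$ gives $\delta=p(m)-(\gamma+b)$ marked nodes, not $p(m)-(\gamma+b+1)$ as you assert; your formula for $\delta'$ contains an off-by-one slip. To land in $\V_{m,\delta}$ with $\delta=p(m)-(\gamma+b+1)$ you must leave \emph{two} points of $C\cap D$ unmarked, which is what the paper does. The partial normalization is then $\widetilde C$ and $\widetilde D$ glued at two points, of arithmetic genus $\gamma+b+1=g_{m,\delta}$ --- consistent --- whereas your version glues at one point and produces genus $\gamma+b$, contradicting your own computation $g_{m,\delta}=\gamma+b+1$ in the second paragraph. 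This is not cosmetic: with two gluing points the target boundary stratum and all subsequent dimension counts change.

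The second gap is in your finiteness argument. You analyse the fiber of $\psi_{m,\delta}|_{\V}$ over the reducible stable curve $\Gamma$ and assume every point of that fiber is of the form $C'+D'$ with $C'\in\V'(S)$ and $D'\in\V''(S)$; but a priori the fiber could contain other curves of $\V$ (reducible in other ways, or lying over other components of the Severi varieties) whose stable model happens to be $\Gamma$, so finiteness of the fiber is not established, and hence neither is generic finiteness by semicontinuity. The paper avoids this entirely: hypotheses (i) and (ii) give directly that $\psi$ restricted to the locus $\B$ of curves $C'+D'$ is generically finite onto its image, so $\dim(\im(\psi|_{\B}))=\dim(\B)=19+\gamma+b$; since the general element of $\im(\psi)$ is a \emph{smooth} curve (the general member of $\V$ is irreducible with exactly $\delta$ nodes, all marked) while $\im(\psi|_{\B})$ consists of singular stable curves, one gets $\dim(\im(\psi))\geqslant 20+\gamma+b=\dim(\V)$, whence the general fiber is finite. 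That one-line comparison of image dimensions is the key step your argument is missing; if you repair the two-points-unmarked bookkeeping and replace the fiber analysis over $\Gamma$ by this image-dimension comparison, the proof goes through.
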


\begin{proof}
  Pick general elements in $\V'$ and $\V''$ like in (i), with $\widetilde{C}=\psi_{\mu,p(\mu)-\gamma}(C) \in \M_{\gamma}$ and
  $\widetilde{D}=\psi_{a,p(a)-b}(D) \in \M_{b}$. Then $(S,C + D)$ corresponds to a point of a component $\V$ of  ${\V_{m,\delta}}$, if we consider as marked nodes of $C+D$ all of its nodes but two in $C \cap D$. Let $\psi={\psi_{m,\delta}}_{|\V}$. This map sends $C+D$ to $\widetilde C$ plus $\widetilde D$ glued at two point (with a further contraction of $\widetilde D$ if $b=0$).

We denote by $\B \subset {\V}$ the subset of curves of the form $C' + D'$ where $C'$ and $D'$ are in $\V'$ and $\V''$, respectively. Then $\dim(\B)=19+\gamma+b$. 

Because of the hypotheses, the map $\psi_{|\B}$ is generically finite onto its image. Hence
$\dim( \im (\psi_{\B}))= 19+\gamma+b$. Since the general element of $\im ({\psi})$ is smooth, we must have $\dim (\im ({\psi})) \geqslant 20+\gamma+b$. Therefore, the general fiber of ${\psi}$ has dimension at most
\[ \dim ({\V}) - (20+\gamma+b)= [19+(\gamma+b+1)]-(20+\gamma+b)=0,\]
and the result follows.
\end{proof}

\begin{cor} \label{cor:tuttelefinitezze}
The map $\psi_{m,\delta}$ is generically finite on some component of $\V_{m,\delta}$ in the following cases, with $g:=p(m)-\delta$:\\
  \begin{inparaenum}[$\bullet$]
  \item $2 \leqslant m \leqslant 4$, $g \geqslant 16$ and $p \geqslant 15$;\\
  \item $m \geqslant 5$, $p \geqslant 7$  and $g \geqslant 11$.
  \end{inparaenum}
\end{cor}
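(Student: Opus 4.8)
The plan is to apply Lemma \ref{lemma:baseperfinitezza} inductively, using as building blocks the generic finiteness results already available: the $m=1$ case from Proposition \ref{prop:unica2}(a) (valid for $g_{1,\delta}\geqslant 15$), the fact that for a \emph{general} $S\in\K_p$ the restriction ${\psi_{m,\delta}}_{|V}$ is generically finite on any fully complete component $V$ of $V_{m,\delta}(S)$ by Proposition \ref{lemma:solok3}, and Lemma \ref{lemma:finitezzasu'} to propagate generic finiteness downward in $\delta$. First I would record the two ``seed'' statements: (1) $\psi^*_{1,\delta}$ is generically finite for $g_{1,\delta}\geqslant 15$ (Proposition \ref{prop:unica2}(a)); (2) for general $S\in\K_p$, ${\psi_{a,p(a)-b}}_{|V''(S)}$ is generically finite as soon as $V''$ is fully complete, e.g. $V''=\V^*_{a,p(a)-b}$ for the component through the appropriate degenerate curves, or more simply any component containing a rational nodal curve, which exists for all $a\geqslant 1$ by Remark \ref{rem:generibassi}.

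For the case $2\leqslant m\leqslant 4$, $p\geqslant 15$, $g\geqslant 16$: I would write $m=\mu+a$ with $\mu=1$ and $a=m-1\in\{1,2,3\}$, take $\gamma=15$ (so $\V'=\V^*_{1,p-15}$ has generically finite moduli map by Proposition \ref{prop:unica2}(a), using $p\geqslant 15$), and choose $b\geqslant 0$ so that $\gamma+b+1=g$, i.e. $b=g-16\geqslant 0$. I then need a component $\V''$ of $\V_{a,p(a)-b}$ satisfying hypotheses (i) and (ii) of Lemma \ref{lemma:baseperfinitezza}: (ii) holds for general $S$ by Proposition \ref{lemma:solok3} provided $\V''$ is fully complete (e.g. pick $\V''$ fully complete containing a rational nodal curve in $|aL|$, which exists by Remark \ref{rem:generibassi}, and then use Lemma \ref{lemma:finitezzasu'} / the filtration to land at the right $\delta$-value $b$); (i), transversality of the general $C\in\V'$ and $D\in\V''$, follows from a dimension count on the general $S\in\K_p$ because the two families move in positive-dimensional linear systems whose general members meet transversally (this is the same type of argument as in the proof of Lemma \ref{lemma:fibra}, or can be arranged by Bertini on a general $K3$). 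One must only check $b\leqslant p(a)$ and that the genus constraints leave $b\geqslant 0$; with $p\geqslant 15$ and $g\geqslant 16$ this is immediate, and the upper bound $g\leqslant$ (anything) is not needed since $b$ can be as large as we like. Then Lemma \ref{lemma:baseperfinitezza} gives generic finiteness of $\psi_{m,\delta}$ with $\delta=p(m)-g$.

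For the case $m\geqslant 5$, $p\geqslant 7$, $g\geqslant 11$: here $g$ can be as small as $11$, so I cannot afford $\gamma=15$. Instead I would bootstrap using the \emph{dominance} results of Corollary \ref{cor:tutteledominanze}: for $m_0=5$ and suitable $\delta_0$ one gets a component $\V_0\subset\V_{5,\delta_0}$ with $g_{5,\delta_0}=11$ whose moduli map is dominant onto the $11$-dimensional $\overline\M_{11}$, hence (as $\dim\overline\M_{11}=30=19+11=\dim\V_0$ on the relevant component, counting relative dimension $11$ over $\K_p$ with $p=$ the minimal allowed) generically finite. More robustly: apply Lemma \ref{lemma:baseperfinitezza} with $\mu=5$, $\gamma=11$, taking $\V'$ to be the component of $\V_{5,p(5)-11}$ coming from Corollary \ref{cor:tutteledominanze} (dominant, so generically finite since source and target then have equal dimension), $a=m-5\geqslant 0$ — if $a=0$ there is nothing to add and $m=5$ is done directly — and for $a\geqslant 1$ choose $b=g-12\geqslant 0$ and $\V''$ a fully complete component of $\V_{a,p(a)-b}$ as before, verifying (i) transversality and (ii) via Proposition \ref{lemma:solok3}. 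Since for $m\geqslant 6$ we may split off as many unit-degree pieces as needed and each step raises $m$ by $a\geqslant1$ while raising the target genus by $b+1$, we reach every $g\geqslant 11$. The requirement $p\geqslant 7$ is exactly what Corollary \ref{cor:tutteledominanze} (last bullet) and Proposition \ref{prop:unica2} need in this regime.

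The main obstacle I expect is verifying hypothesis (i) of Lemma \ref{lemma:baseperfinitezza} — transversality of the general members $C\in\V'$ and $D\in\V''$ on the general $K3$ surface $S$ — together with confirming that $C+D$, with all nodes marked except two in $C\cap D$, indeed lies on a component of $\V_{m,\delta}$ whose general member is irreducible with exactly $\delta$ nodes (i.e. that the marked-node $2$-connectedness condition holds). The transversality is morally clear because both linear systems $|\mu L|$ and $|aL|$ are base-point free on a $K3$ and the Severi loci are of the expected codimension, so a Bertini-type argument on general $S$ works; but one must make sure the \emph{nodes already present} on $C$ and on $D$ do not collide with each other or with $C\cap D$, which again is a codimension count in the universal family over $\K_p$ using that $\V'$ and $\V''$ have the expected dimensions. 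Once (i) and (ii) are in place, the dimension bookkeeping in Lemma \ref{lemma:baseperfinitezza} does the rest, and Lemma \ref{lemma:finitezzasu'} handles any shift needed to hit the exact value of $\delta$.
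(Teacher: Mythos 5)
Your overall skeleton is the paper's: seed Lemma \ref{lemma:baseperfinitezza} with Proposition \ref{prop:unica2}(a) for the first bullet, use the dominance of Corollary \ref{cor:tutteledominanze} at $g=11$ (onto the equal-dimensional $\overline\M_{11}$) for the second, and propagate with Lemma \ref{lemma:finitezzasu'}. But your bookkeeping of $b$ has a genuine hole. You choose $b=g-16$ (resp.\ $b=g-12$) and assert that ``$b$ can be as large as we like''; it cannot. The auxiliary curve $D$ lies in $|aL|$, whose members have arithmetic genus $p(a)=a^2(p-1)+1$, so necessarily $b\leqslant p(a)$. For $m=2$, $a=1$, $p=15$ this caps $b$ at $15$, i.e.\ $g\leqslant 31$, while $g$ ranges up to $p(2)=57$; the same failure occurs throughout both bullets for large $g$. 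Separately, in the second bullet your climbing route with $\mu=5$, $\gamma=11$, $b=g-12\geqslant 0$ forces $g\geqslant 12$, so the case $m\geqslant 6$, $g=11$ is not covered: you invoked the dominance-implies-finiteness argument only for $m_0=5$, whereas Corollary \ref{cor:tutteledominanze} in fact gives dominance at $g=11$ for \emph{every} $m\geqslant 5$ with $p\geqslant 7$, which is what is needed.

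Both defects are repaired by doing what the paper does: take $b=0$ always. For the first bullet the paper sets $\mu=1$, $\gamma=15$, $a=m-1$, $\V''=\U_a\subset\V_{a,p(a)}$ the finite family of rational nodal curves of Proposition \ref{prop:curvedichen}; then hypothesis (ii) is trivial ($V''(S)$ is a finite set) and hypothesis (i) is checked by the explicit degeneration to $R\in\mathfrak S_p$ as in Lemma \ref{lemma:fibra}, rather than by a Bertini-type argument on a general $K3$, which is less concrete for curves constrained to be nodal. This establishes only the minimal genus $g=16$ (resp.\ $g=11$, directly from Corollary \ref{cor:tutteledominanze} for all $m\geqslant 5$), and then Lemma \ref{lemma:finitezzasu'} — which decreases $\delta$, hence increases $g$, along the fully complete filtration — yields all larger $g$ without ever having to realize the extra genus on the auxiliary curve $D$. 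You cite Lemma \ref{lemma:finitezzasu'} only as an afterthought for ``shifts''; it is in fact the mechanism that makes the unbounded range of $g$ accessible.
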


\begin{proof}
Let $m \geqslant 2$. If $p \geqslant 15$, we apply Lemma \ref{lemma:baseperfinitezza} with $\mu=1$, 
$\gamma=15$, $a=m-1 \geqslant 1$, $b=0$, $\V'=\V^*_{p-15}$, the component for which we proved generic finiteness of  $\psi_{p-15}$ in 
Proposition \ref{prop:unica2} and $\V''=\U_a \sub \V_{a,p(a)}$ the component consisting of rational curves that degenerate to a curves of the type $U_a(R)$ as in Definition \ref{def:curvedichen},  cf. Proposition \ref{prop:curvedichen}.  Condition (ii) of Lemma \ref{lemma:baseperfinitezza} is  satisfied and an argument as in the proof of Lemma \ref{lemma:fibra} shows that also condition (i) is satisfied. Then Lemma \ref{lemma:baseperfinitezza} implies that $\psi_{m,\delta}$ is generically finite on a suitable component of the universal Severi variety
for $g=16$. Lemma \ref{lemma:finitezzasu'} yields that  $\psi_{m,\delta}$ is generically finite on some component
for all $g \geqslant 16$, as stated.

If $m \geqslant 5$, $p \geqslant 7$ and $g =11$, the map $\psi_{m,\delta}$ is generically finite 
on some component of $\V_{m,\delta}$ by Corollary \ref{cor:tutteledominanze}, hence Lemma \ref{lemma:finitezzasu'} yields the same for all $g \geqslant 11$. 
\end{proof}

 This proves Theorem \ref{thm:main}(B) for $m \geqslant 2$.

\end{document}